\newtheorem{theorem}{Theorem}[section]
\newtheorem{corollary}[theorem]{Corollary}
\newtheorem{lemma}[theorem]{Lemma}
\newtheorem{proposition}[theorem]{Proposition}
\theoremstyle{definition}
\newtheorem{definition}[theorem]{Definition}
\newtheorem{remark}[theorem]{Remark}
\newtheorem{example}[theorem]{Example}
\title[Lattice Structures for Attractors  III]{Lattice Structures for Attractors  III}
\author[W.D. Kalies, K. Mischaikow, and R.C.A.M. Vandervorst]{}
\subjclass{Primary: 37B25, 06D05; Secondary: 37B35.}
 \keywords{Booleanization, Conley form, Morse decomposition, distributive lattice, Birkhoff-Stone Representation Theorem.}
 \email{mischaik@math.rutgers.edu}
 \email{wkalies@fau.edu}
 \email{vdvorst@few.vu.nl}
\thanks{The first author was partially supported by NSF grant NFS-DMS-0914995
and ARO grant W911NF1810306. The second author was 
partially supported by NSF grants  1521771, 1622401, 1839294, 1841324, 1934924, by NIH-1R01GM126555-01 as part of the Joint DMS/NIGMS Initiative to Support Research at the Interface of the Biological and Mathematical Science and contracts  and DARPA contract HR001117S0003-SD2-FP-011.
The first two  authors are also grateful for  visiting positions at the MBI in Columbus and the CRM in Montreal. }
\newenvironment{fig}
{\begin{figure}[hbt]}
{\end{figure}}
\newcommand{\bfig}{\begin{fig}}
\newcommand{\efig}{\end{fig}}
\newcommand{\rc}{{\scriptscriptstyle\#}}
\newcommand{\setof}[1]{\left\{ {#1}\right\}}
\newcommand{\Hom}{\hbox{\rm Hom}}
\newcommand{\smin}{\smallsetminus}
\newcommand{\R}{{\mathbb{R}}}
\newcommand{\T}{{\mathbb{T}}}
\newcommand{\Z}{{\mathbb{Z}}}
\newcommand{\cA}{{\mathcal A}}
\newcommand{\cB}{{\mathcal B}}
\newcommand{\cC}{{\mathcal C}}
\newcommand{\cD}{\mathcal{D}}
\newcommand{\cF}{{\mathcal F}}
\newcommand{\cM}{{\mathcal M}}
\newcommand{\cR}{{\mathcal R}}
\newcommand{\cS}{{\mathcal S}}
\newcommand{\cU}{{\mathcal U}}
\newcommand{\cV}{{\mathcal V}}
\newcommand{\cX}{{\mathcal X}}
\newcommand{\sA}{{\mathsf A}}
\newcommand{\sB}{{\mathsf B}}
\newcommand{\sE}{{\mathsf E}}
\newcommand{\sF}{{\mathsf F}}
\newcommand{\sI}{{\mathsf I}}
\newcommand{\sJ}{{\mathsf J}}
\newcommand{\sK}{{\mathsf K}}
\newcommand{\sL}{{\mathsf L}}
\newcommand{\sM}{{\mathsf M}}
\newcommand{\sN}{{\mathsf N}}
\newcommand{\sO}{{\mathsf O}}
\newcommand{\sP}{{\mathsf P}}
\newcommand{\sQ}{{\mathsf Q}}
\newcommand{\sT}{{\mathsf T}}
\newcommand{\sU}{{\mathsf U}}
\newcommand{\sfS}{{\mathsf \Sigma}}
\newcommand{\CF}{{\mathsf{C}}}
\newcommand{\CCF}{{\CF^\sigma}}
\newcommand{\sOcl}{{\mathsf O}^{\rm clp}}
\newcommand{\scrR}{{\mathscr R}}
\newcommand{\scrO}{{\mathscr O}}
\newcommand{\scrC}{{\mathscr C}}
\newcommand{\sAtt}{{\mathsf{ Att}}}
\newcommand{\sRep}{{\mathsf{ Rep}}}
\newcommand{\sMorse}{{\mathsf{ Morse}}}
\newcommand{\sMTile}{{\mathsf{ MTile}}}
\newcommand{\sMTileR}{{\mathsf{ MTile}_\scrR}}
\newcommand{\sInvset}{{\mathsf{ Invset}}}
\newcommand{\IS}{{\mathsf{ Invset}}}
\newcommand{\sConvex}{{\mathsf{Co}}}
\newcommand{\sANbhd}{{\mathsf{ ANbhd}}}
\newcommand{\sRNbhd}{{\mathsf{ RNbhd}}}
\newcommand{\sANbhdR}{{\mathsf{ ANbhd}}_{\scrR}}
\newcommand{\sINbhd}{{\mathsf{ INbhd}}}
\newcommand{\sABlockR}{{\mathsf{ABlock}}_{\scrR}}
\newcommand{\sRBlockR}{{\mathsf{RBlock}}_{\scrR}}
\newcommand{\sABlockC}{{\mathsf{ABlock}}_{\scrC}}
\newcommand{\sRBlockO}{{\mathsf{RBlock}}_{\scrO}}
\newcommand{\sIBlock}{{\mathsf{IBlock}}}
\newcommand{\sIBlockR}{{\mathsf{IBlock}}_{\scrR}}
\newcommand{\sIsol}{{\mathsf{ Isol}}}
\newcommand{\sSet}{{\mathsf{Set}}}
\newcommand{\sSCC}{{\mathsf{SC}}}
\newcommand{\bomega}{{\bm{\omega}}}
\newcommand{\balpha}{{\bm{\alpha}}}
\newcommand{\sBDLat}{{\mathbf{BDLat}}}
\newcommand{\sRC}{{\mathsf{RC}}}
\newcommand{\Chi}{\raise .75ex\hbox{$\chi$}}
\newcommand{\pred}[1]{\overleftarrow #1}
\newcommand{\pr}[1]{\overleftarrow #1}
\newcommand{\down}{\downarrow\!}
\newcommand{\up}{\uparrow\!}
\newcommand{\al}{\mathbf{\nu}}
\newcommand{\ji}{\mathbf{\mu}}
\newcommand{\sPoset}{\mathbf{Poset}}
\newcommand{\vgln}{\lb\begin{array}{rcl}}
\newcommand{\eindvgln}{\end{array}\right.}
\newcommand{\alphaOg}{\alpha_{\rm o}(\gamma_x^-)}
\newcommand{\cl}{{\rm cl}\,}
\newcommand{\Int}{\mbox{\rm int\,}}
\newcommand{\Inv}{\mbox{\rm Inv}}
\newcommand{\id}{\mathop{\rm id }\nolimits}
\begin{document}
\begin{sloppypar}
\maketitle

\centerline{\scshape William D. Kalies }
\medskip
{\footnotesize
 \centerline{Florida Atlantic University}
   \centerline{777 Glades Road}
   \centerline{Boca Raton, FL 33431, USA}
} 

\medskip

\centerline{\scshape Konstantin Mischaikow}
\medskip
{\footnotesize
 \centerline{Rutgers University}
   \centerline{110 Frelinghusen Road}
   \centerline{Piscataway, NJ 08854, USA}
}

\medskip

\centerline{\scshape Robert C.A.M. Vandervorst}
\medskip
{\footnotesize
 \centerline{ VU University}
   \centerline{De Boelelaan 1081a}
   \centerline{1081 HV, Amsterdam, The Netherlands}
}

\bigskip


\begin{abstract}
The theory of bounded, distributive lattices provides the appropriate language for describing directionality and asymptotics in dynamical systems.
For bounded, distributive lattices the general notion of `set-difference' taking values in a semilattice is introduced, and is called the Conley form.
The {\em Conley form} is used to build concrete, set-theoretical models of spectral, or Priestley spaces, of bounded, distributive lattices and their finite
coarsenings.
Such representations build order-theoretic models of dynamical systems, which
are used to develop tools for computing global characteristics of a dynamical system.
\end{abstract}

\section{Introduction}
\label{sec:intro}

The global structure of a nonlinear dynamical system can be characterized in terms of its recurrent and nonrecurrent dynamics.
A systematic approach to this decomposition began with Smale \cite{Smale} in the context of Axiom A diffeomorphisms.
For general dynamical systems, Conley \cite{Conley} established the concept of a Morse  decomposition that dynamically defines an order on a finite collection of invariant sets that contain the recurrent dynamics, cf.\ Definition \ref{defn:MD}, whereby finite posets are introduced as a description of nonrecurrent  global dynamics.

To fully analyze the dynamical information associated with a Morse decomposition, one needs the existence of an index filtration to obtain a connection matrix~\cite{Franzosa2,Robbin:1992wp,Harker:Mischaikow:Spendlove}. As observed by Robbin and Salamon~\cite{Robbin:1992wp}, an index filtration is a finite lattice of attracting blocks. Indeed, the set of all  attracting blocks as well as the set of all attractors in a dynamical system  have the structure of a bounded, distributive lattice.

The duality between posets that capture the gradient-like nature of the dynamics and lattices that provide insight into the global organization of the dynamics is explored in a series of papers \cite{KMV-0,KMV-1a,KMV-1b,KastiKV} that develop 
an algebraic representation of the nonrecurrent structure of nonlinear dynamics.
While this effort has intrinsic mathematical interest, it also  has considerable practical value.
For the past 25 years there has been a systematic attempt to exploit ideas from Conley theory in the context of rigorous numerical analysis of nonlinear systems \cite{MMLorenz,DJM,MZ,SH,CH,THPP} and  as a technique for the analysis of time-series data \cite{mischaikow:mrozek:reiss:szymczak, cummins:gedeon:harker:mischaikow,DKP}.
In this setting the natural formalism is that of a relation $\cF\subset \cX \times \cX$ where $\cX$ is a finite set that represents a discretization of the phase space, and the dynamics is generated by iterations of $\cF$.
Fundamental questions are: (1) how well does the numerical approximation $\cF$ capture the dynamics of the original system $\varphi$, and (2) having generated $\cF$ from data, what models $\varphi$ produce dynamics compatible with $\cF$.
As is made clear below, the algebra derived in this paper  provides a framework in which  to address these questions.

The above mentioned work \cite{KMV-0,KMV-1a,KMV-1b,KastiKV} focuses on the properties of lattices of attractors/repellers and attracting/repelling blocks for two rather general models of dynamics:
the \emph{combinatorial dynamics} generated by a relation $\cF$ defined on a finite set $\cX$ as indicated above, and single-valued dynamics defined on a compact metric space $X$ as a continuous function $\varphi\colon \T^+ \times X \to X$, cf.\ App.\ \ref{sec:dynamics}.

Directionality  and asymptotics associated with time in these systems can be expressed by the duality between the lattices of attractors and repellers. 
In particular, recall from \cite{KMV-1a,KMV-1b} the following two commutative diagrams.
First, in the context of dynamical systems
\begin{equation}\label{diag:AR}
\begin{diagram}
\node{\sABlockC(\varphi)} \arrow{e,l,<>}{^c} \arrow{s,l,A}{ {\omega}}\node{\sRBlockO(\varphi)} \arrow{s,r,A}{ {\alpha}}\\
\node{\sAtt(\varphi)} \arrow{e,l,<>}{^*}   \node{\sRep(\varphi)} 
\end{diagram}
\end{equation}
where $\sABlockC(\varphi)$ and $\sRBlockO(\varphi)$ are the lattices of closed attracting/open repelling blocks and $\sAtt(\varphi)$ and
$\sRep(\varphi)$ are the lattices of attractor and repellers. 
The  duality mappings, $^c$  and $^*$, are involutions, and the latter is defined as $A\mapsto A^*$, the dual repeller of $A$, cf.\ App.\ \ref{sec:topdyn}.

Second, for combinatorial systems the corresponding diagram is 
\begin{equation}
\begin{diagram}
\node{\sInvset^+(\cF)} \arrow{e,l,<>}{^c}\arrow{s,l,A}{\bomega} \node{\sInvset^-(\cF)} \arrow{s,r,A}{\balpha} \\
\node{\sAtt(\cF)}     \arrow{e,l,<>}{*} \node{\sRep(\cF)} 
\end{diagram}
\end{equation}
where $\sInvset^+(\cF)$ and $\sInvset^-(\cF)$ are the lattices of forward and backward invariant sets, respectively, and $\sAtt(\cF)$ and $\sRep(\cF)$ are the lattices of attractor and repellers.
Again, the duality mappings $^c$ and $^*$ are involutions where the latter is defined by $\cA \mapsto \cA^*:=\balpha(\cA^c)$, cf.\ App.\ \ref{sec:combdyn}.

Of course, neither of these diagrams directly provides the poset structure of a Morse decomposition that explicitly describes an order in the gradient-like dynamics.
The missing ingredient  is a notion of set-difference within the setting of lattice theory. 
In a Boolean algebra $(\sB,\vee,\wedge,\urcorner,0,1)$ the derived operation {\em set-difference} on $\sB$ is given by $a,b\mapsto a \smin b =a\wedge b\urcorner$.
We construct the natural analogue for bounded, distributive lattices. 

In particular, in Section~\ref{sec:MPbdL}, given a bounded, distributive lattice $\sL$ we define a notion of set-difference, called the \emph{canonical Conley} form, via $\sB(\sL)$ which is the  Booleanization of $\sL$, cf.\ Section~\ref{sec:booldual}.
There is a natural embedding $j\colon \sL\to\sB(\sL)$, and since $\sB(\sL)$ is a Boolean algebra, we define the canonical Conley form to be 
\[
(a,b) \mapsto \CCF(a,b) := j(a)\smin j(b) = A\smin B \in \sB^\updownarrow(\sL), 
\]
where $\sB^\updownarrow(\sL)$ is the semilattice generated by the canonical Conley form.

However, abstract knowledge of the existence of $\CCF$ and $\sB^\updownarrow(\sL)$ is of limited value.
In Section~\ref{CFdefn} we introduce the \emph{Conley forms on $\sL$ in $\sI$}.
These are  semilattice morphisms $\CF\colon \sL \times \sL \to \sI$ where $\sI$ is an explicit meet semilattice consisting of structures of interest,
 and $\CF = \gamma \circ \CCF$ for some injective semilattice homomorphism $\gamma$.
Remarkably, any Conley form on $\sL$ is characterized by the following three properties:
\begin{enumerate}
\item[(Absorption)] $\CF(a\vee b, a) = \CF(b,a)$  and  $\CF(a,a \wedge b) = \CF(a,b)$ for all $a,b\in \sL$.
\item[(Distributivity)]  \; $\CF(a\wedge c,b\vee d) = \CF(a,b) \wedge \CF(c,d)$ for all $a,b,c,d\in \sL$;
\item[(Monotonicity)]  \; $\CF(a,b)=\CF(0,1)$ implies $a\le b$ for $a,b\in\sL$.
\end{enumerate}
This leads to the following result, cf.\ Theorem~\ref{mainthm12}.
\begin{theorem}
\label{main1}
Let $\CF\colon \sL \times \sL \to \sI$ and $\CF'\colon \sL \times \sL \to \sI'$ be Conley forms. Then, there exists a meet semilattice isomorphism
$g\colon \CF(\sL\times\sL) \to \CF'(\sL\times \sL)$ such that $\CF'=g\circ \CF$.
\end{theorem}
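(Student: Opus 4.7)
The plan is to exploit the factorization that is built into the very definition of a Conley form. By hypothesis, there exist injective meet-semilattice homomorphisms $\gamma\colon \sB^\updownarrow(\sL)\to\sI$ and $\gamma'\colon \sB^\updownarrow(\sL)\to\sI'$ such that
\[
\CF = \gamma\circ\CCF\qquad\text{and}\qquad \CF' = \gamma'\circ\CCF.
\]
The strategy is therefore to let $\CCF$ act as a mediating object: transport along $\gamma^{-1}$ and then push forward by $\gamma'$.

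First, I would verify that $\CCF\colon \sL\times\sL\to\sB^\updownarrow(\sL)$ is surjective. The set $\sB^\updownarrow(\sL)$ is defined as the sub–semilattice of $\sB(\sL)$ generated by the image of $\CCF$. Equip $\sL\times\sL$ with the meet $(a,b)\wedge(c,d)=(a\wedge c,b\vee d)$ dictated by the Distributivity axiom; then $\CCF$ is a meet-semilattice morphism, so $\CCF(\sL\times\sL)$ is already closed under meets and hence equals $\sB^\updownarrow(\sL)$.

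Second, because $\gamma$ is an injective semilattice homomorphism, its corestriction
\[
\gamma\colon \sB^\updownarrow(\sL)\ \longrightarrow\ \gamma\bigl(\sB^\updownarrow(\sL)\bigr)=\gamma\bigl(\CCF(\sL\times\sL)\bigr)=\CF(\sL\times\sL)
\]
is a meet-semilattice isomorphism, and likewise $\gamma'\colon \sB^\updownarrow(\sL)\to \CF'(\sL\times\sL)$ is an isomorphism onto its image. Define
\[
g\ :=\ \gamma'\circ\gamma^{-1}\colon\ \CF(\sL\times\sL)\ \longrightarrow\ \CF'(\sL\times\sL).
\]
As a composition of meet-semilattice isomorphisms, $g$ is itself a meet-semilattice isomorphism. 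The identity $g\circ\CF=\CF'$ follows immediately:
\[
g\circ\CF \;=\; \gamma'\circ\gamma^{-1}\circ\gamma\circ\CCF \;=\; \gamma'\circ\CCF \;=\; \CF'.
\]

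The only non-formal step is the surjectivity claim $\CCF(\sL\times\sL)=\sB^\updownarrow(\sL)$; everything else is a diagram chase once the factorization through the canonical Conley form is in hand. I do not expect a serious obstacle, since the hard content of the theory — namely that Conley forms factor through a universal object with an injective comparison map — has been absorbed into the definition of Conley form itself (equivalently, into the earlier verification that Absorption, Distributivity, and Monotonicity imply this factorization).
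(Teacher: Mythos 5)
Your proof is correct and is essentially the paper's own argument: the formal version of this statement is Corollary~\ref{charequivCF}, which defines $g=\gamma'\circ\gamma^{-1}$ on $\sI_\CF=\gamma\bigl(\sB^\updownarrow(\sL)\bigr)$ exactly as you do, using the surjectivity $\CCF\colon\sL\times\sL\twoheadrightarrow\sB^\updownarrow(\sL)$ and the injectivity of $\gamma,\gamma'$. You are also right about where the substantive content lives: the existence of the injective comparison maps $\gamma,\gamma'$ (Theorem~\ref{mainthm12}, whose proof of injectivity uses compactness of the Priestley spectrum) is absorbed into the definition of a Conley form, so the remaining step is the purely formal composition you give.
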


In particular, $\CCF$ is a Conley form.
Since  Conley forms are unique  up to isomorphisms, we adopt the notation 
\[
a-b := \CF(a,b),
\]
if there is no ambiguity about the specific representation. 

Observe that $\sL$ need not be a finite lattice. Indeed lattices of attactors and attracting blocks are often infinite. The proof of the Theorem~\ref{main1} relies on the compactness of the spectrum $\sfS(\sL)$ in the Priestley topology, cf.~Section~\ref{sec:booldual}.

As is demonstrated in Section~\ref{exoflf}, with the Conley form we are able to identify \emph{Morse sets} from the lattice structures of attractors and repellers.
In particular, in Example~\ref{ex:X4} we use the Conley form on $\sAtt(\varphi)$ in $\sInvset(\varphi)$, the  lattice  of invariant sets, to define $\sMorse(\varphi)$, the semilattice of Morse sets of $\varphi$.
More precisely,
\[
\sMorse(\varphi):=\CF_\sAtt(\sAtt(\varphi)\times\sAtt(\varphi))
\]
where $\CF_\sAtt(A,A') = A-A' := A\cap A'^*$, cf.\ \cite[II.5.3.E]{Conley}.
Example~\ref{CFcomb} uses the same formalism to define Morse sets for the dynamics generated by a relation $\cF$.

As is shown in Section~\ref{mapslf}, homomorphisms between bounded distributive lattices lead to homomorphisms between  Conley forms. 
This provides us with a tool to analyze the global structure of invariant sets.
For example, if $\sINbhd(\varphi)$ and $\sIsol(\varphi)$ are the meet semilattices of isolating neighborhoods and isolated invariant sets respectively, cf.\ Section\ \ref{dynsys12}, then the following diagram shows how the Conley forms on $\sABlockC(\varphi)$ and $\sAtt(\varphi)$  define isolated invariant sets
\begin{equation}\label{diag:AR2}
\begin{diagram}
\node{\sABlockC(\varphi)\times \sABlockC(\varphi)} \arrow{e,l}{\CF } \arrow{s,l,A}{  \omega\times \omega}\node{\sINbhd(\varphi)} \arrow{s,r,A}{ \Small \Inv}\\
\node{\sAtt(\varphi)\times \sAtt(\varphi)} \arrow{e,l}{\CF_\sAtt }   \node{\sIsol(\varphi)} 
\end{diagram}
\end{equation}
where  
\[
\CF(U,U') = U\cap U'^c = U\smallsetminus U'\;\; \hbox{for $U,U'\in \sABlockC(\varphi)$.}
\]

The remainder of the paper uses the tools developed in Sections~\ref{sec:booldual}-\ref{CFdefn} and \ref{mapslf} to provide algebraic representations of structures of global dynamics.
In Section~\ref{sec:attmd} we provide partitions of phase space, called \emph{Morse tiles}, in the context of continuous and combinatorial dynamics.
Furthermore, we discuss Morse tiles in the context of closed regular sets as these provide a useful computational structure.

In Section~\ref{sec:MRandMD} we turn to the goal mentioned earlier in this introduction: an explicit description of the relationship between the order relations on Morse  decompositions and the lattice structures of attractors and repellers.

\begin{definition}
\label{defn:MD}
Given a dynamical system $\varphi\colon \T^+\times X\to X$ on a compact metric space, a \emph{Morse representation} of $\varphi$ is a finite poset $(\sM,\le)$ where $\sM$  consists of mutually disjoint, nonempty, 
compact invariant sets, called \emph{Morse sets}, with the property that for each $x\in X$ there exists $M\in\sM$ such that $\omega(x)\subset M$,
and for each complete orbit $\gamma_x$ with $x
\notin\bigcup_{M\in\sM}M$ there exist $M<M'$ such that $\omega(x)\subset M$ and $\alphaOg\subset M'$.
\end{definition}

As is discussed in Section~\ref{subsec:repr1}, every Morse representation can be generated from  a finite sublattice of attractors $\sA$ with  the associated Morse representation given by 
\[
\sM(\sA) = \setof{\CF_\sAtt(A,\pred{A})\mid A\in\sJ(\sA)}
\]
where $\sJ(\sA)$ denotes the set of join-irreducible elements of $\sA$, and $\pred{A}$ is the unique immediate predecessor of $A$.
Given a finite sublattice $\sN$ of attracting blocks, and the surjective homomorphism $\omega\colon \sN\twoheadrightarrow \sA$, we obtain a dual order-embedding $\pi\colon \sM(\sA) \hookrightarrow \sT(\sN)$ where 
$$
\sT(\sN)=\{\CF^b(N,\pred{N})=N\smallsetminus N'~|~N\in\sJ(\sN)\}
$$
via the Conley form on $\sN$. The map $\pi\colon \sM(\sA) \hookrightarrow \sT(\sN)$ is dual to $\omega\colon \sN\twoheadrightarrow \sA$ and is referred to as a
 \emph{tesselated Morse decomposition}, cf.\ Theorem \ref{thm:mainMDthm}.

As is mentioned earlier, the ideas from Conley theory are being used in the context of rigorous computations
and data analysis, and thus a fundamental question is how does the dynamics captured by a relation $\cF$ compare to the dynamics of a continuous system $\varphi$?
We address this question in Section~\ref{subsec:repr2}.
Closed regular sets, e.g.\ triangulations or regular CW-complexes, provide a wide variety of discretizations of phase space for continuous dynamical systems, and as is shown in Section~\ref{sec:regularClosed}, is rich enough to capture the lattice of attractors of a continuous system $\varphi$. 
This leads us to consider the span
\begin{equation}
\label{span11}
\begin{diagram}
\node{\scrR(X)} \node{\sABlockR(\varphi)}\arrow{w,l,V}{\supset}\arrow{e,l,A}{\omega} \node{\sAtt(\varphi),}
\end{diagram}
\end{equation}
where $\scrR(X)$ are the regular closed sets in $X$ and $\sABlockR(\varphi)\subset \sABlockC(\varphi)$ are the regular closed attracting blocks for $\varphi$, cf.\ App.\ \ref{sec:dynamics}.
Let $\scrR_0$ be a finite subalgebra of $\scrR(X)$.
Let $\cX$ be an indexing set for the atoms of $\scrR_0$ and 
 $|\cdot |\colon \sSet(\cX)\to \scrR_0\subset \scrR(X)$
by identifying sets of labels with regular closed sets by unioning them.
Finally, consider a relation $\cF\subset \cX\times\cX$.
Ideally, we have the existence of the following commutative diagram, that we refer to as a \emph{commutative combinatorial model}  for $\varphi$: 
\begin{equation}
\label{model2a2}
\begin{diagram}
\node{\scrR(X)}\node{\sABlockR(\varphi)} \arrow{w,l,V}{\subset}\arrow{e,l,A}{\omega} \node{\sAtt(\varphi)} \\
\node{\sSet(\cX)}\arrow{n,l}{|\cdot |} \node{\IS^+(\cF)}\arrow{n,l}{|\cdot |}\arrow{w,l,V}{\subset}\arrow{e,l,A}{\bomega} \node{\sAtt(\cF),}\arrow{n,l}{\omega(|\cdot |)}
\end{diagram}
\end{equation}
Theorem~\ref{critWL} provides an exact characterization of the properties of $\cF$ such that \eqref{model2a2} commutes.

The above description takes the perspective that $\varphi$ is the object of primary importance and $\cF$ is derived in order to study the dynamics of $\varphi$ computationally. 
However, if one begins with data, then there are a variety of methods by which one can derive a relation $\cF$.
In this setting Theorem~\ref{critWL} provides constraints on continuous models $\varphi$ that are compatible with the data.
An open problem, but of increasing relevance in an age of data driven science, is to derive techniques for choices of maps or differential equations that generate $\varphi$.

We conclude by noting that we have restricted our attention in this paper to single-valued, continuous dynamical systems and to combinatorial dynamical systems.
There are, of course, other models for continuous dynamics, e.g.\ set-valued \cite{aubin:frankowska,akin,mcgehee,arnold}, and for combinatorial dynamics, e.g.\ combinatorial vector fields \cite{kaczynski:mrozek:wanner, mrozek:2017}.
It is our belief that the algebraic structures developed in this paper can be applied equally well in these other settings.

\section{Booleanization}
\label{sec:booldual}
In this section we describe two  algebraic principles,
Booleanization and duality. These tools  are fundamental   to the description of the algebraic structures of global dynamics. 
Denote the categories of bounded, distributive lattices and posets by  $\sBDLat$ and $\sPoset$ respectively.
A bounded, distributive lattice has unique neutral elements $0$ and $1$, and
in $\sBDLat$  all lattice homomorphisms preserve  $0$ and $1$, and all sublattices contain $0$ and $1$.

There are two functors that relate $\sBDLat$ and $\sPoset$.
The \emph{down-set functor} $\sO\colon \sPoset \Rightarrow \sBDLat$ is a contravariant functor that assigns  to a poset $\sP$ the bounded, distributive lattice of down-sets denoted by $(\sO(\sP),\cup,\cap)$. 
Recall that a {\em down-set} $I$ in a poset $\sP$ is defined via the property that $p\in I$ and $q\le p$ implies $q \in I$.
The \emph{spectral functor} $\sfS\colon \sBDLat\Rightarrow \sPoset$ is a contravariant functor that assigns to a bounded, distributive lattice $\sL$ the poset $\bigl(\sfS(\sL),\subset\bigr)$ of the prime ideals in $\sL$ called the \emph{spectrum} of $\sL$.
Recall that an {\em ideal} in a bounded, distributive lattice is a down-set $I$ that is closed under join, i.e.~$a,b\in I$ implies $a\vee b\in I$.
An ideal $I$ is a {\em prime ideal} if $a\wedge b\in I$ implies $a\in I$ or $b\in I$.
The prime ideals are exactly the pre-images $I=f^{-1}(0)$ with $f\in \Hom(\sL,{\mathbf 2})$ where ${\mathbf 2}$
denotes the lattice of two elements 
$\{0,1\}$, cf.\ \cite{Roman,Davey}.

A classical result due to Birkhoff states that a bounded, distributive lattice is isomorphic to a sublattice of $\sSet(\sfS(\sL))$.
The map
\begin{equation*}
\begin{aligned}
j\colon \sL & \to \sO(\sfS(\sL)) \\
a &\mapsto j(a) =  \{I\in \sfS(\sL)~|~a\not \in I\}
\end{aligned}
\end{equation*}
defines such an embedding.
The map $j$ is not surjective in general. 
However, when $\sL$ is finite, it is surjective, and this fact is called the Birkhoff Representation Theorem for finite, distributive lattices \cite[Theorem~6.6]{Roman}. 
In the case that $\sL$ is a Boolean algebra, Stone introduced a topology on the spectrum in order to characterize the image of $j$, and this characterization is known as the Stone Representation Theorem \cite[Theorem~10.18]{Roman}. 
The idea underlying the Stone representation is
that since the clopen sets in a topological space form a Boolean algebra, one can topologize $\sfS(\sL)$ so that the
image of $j$ is the algebra of clopen sets, cf.\  \cite{Roman,Davey}.

For bounded, distributive lattices, Priestley introduced a topology on the spectrum that
determines the image of $j$. 
Priestley's topology  is induced by the basis 
\[
 \{j(a) \smin j(b)~|~a,b\in \sL\},
\]
where $j(a)\smin j(b) := j(a) \cap j(b)^c$ is set-difference.
Since $j(a), j(b)^c$ are basic open sets, by choosing 
$b, a=\varnothing$, all the basic open sets  are also closed and thus clopen. Note that each $j(a)$ is a down set so that the  image of $j$ is 
a sublattice of the down sets of $\sfS(\sL)$.
The Priestley Representation Theorem characterizes the
image of $j$ as the clopen down sets of $\sfS(\sL)$
\[
\sOcl(\sfS(\sL)) = \sB^\downarrow(\sL) := \{j(a)~|~a \in \sL\},
\]
and $\sL$ is isomorphic to $\sB^\downarrow(\sL)$ via the map $j\colon\sL\to\sB^\downarrow(\sL)$, cf.\ \cite{Roman,Davey}.

The spectrum $(\sfS(\sL),\subseteq)$ is a poset and with the Priestley topology the spectrum  is a compact and totally order-separated topological space, called a \emph{Priestley space}. Priestley spaces are necessarily Hausdorff and $0$-dimensional.
The Priestley Representation Theorem states 
that the category of bounded, distributive lattices is  dually equivalent to the category of Priestley spaces.

Birkhoff's theorem that every bounded, distributive lattice  is isomorphic to a sublattice of the Boolean algebra $\sSet(\sfS(\sL))$ motivates the question of obtaining a  smallest Boolean algebra in which the lattice embeds. Such a Boolean algebra is called a \emph{Booleanization}, or \emph{free Boolean extension}, and
a general procedure to obtain a specific Booleanization is based on the Priestley Representation Theorem \cite[Theorem~10.15]{Roman}, cf.\ \cite{Balbes,Miraglia,Vickers}.

Let $\sB(\sL)$ be the Boolean algebra of all clopen subsets of $\sfS(\sL)$. 
The above construction yields
the following Booleanization theorem.

\begin{proposition}[Theorem~10.19 in \cite{Roman}]
\label{prop:booleanize2}
For every bounded distributive lattice $\sL$, 
the map
\begin{equation}\label{jmap}
\begin{aligned}
j\colon \sL & \to \sB(\sL) \\
a &\mapsto j(a) =  \{I\in \sfS(\sL)~|~a\not \in I\}
\end{aligned}
\end{equation}
is the unique lattice monomorphism
 with the property that for every homomorphism $h\colon \sL\to \sE$ to a Boolean algebra $\sE$  
there exists a unique lattice homomorphism $\sB(h):\sB(\sL)\to \sE$ such that $\sB(h)\circ j=h$. The Boolean algebra $\sB(\sL)$ is called the {Booleanization} of $\sL$ and the  mapping $\sB(h)\colon \sB(\sL) \to \sE$ is Boolean.
\end{proposition}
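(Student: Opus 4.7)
The plan is to verify the universal property by making $\sB(\sL)$ concrete as the Boolean subalgebra of $\sSet(\sfS(\sL))$ generated by $j(\sL)$, and then building $\sB(h)$ directly on that generating description. I would proceed in three stages: (i) $j$ is a lattice monomorphism; (ii) every element of $\sB(\sL)$ admits a finite Boolean expression in $j(\sL)$; (iii) the induced prescription for $\sB(h)$ on such expressions is well-defined, Boolean, and unique.

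Stage (i) is routine from the defining formula for $j$. For any prime ideal $I \in \sfS(\sL)$, the down-set property gives $a\vee b \in I \iff a \in I$ and $b \in I$, and primeness gives $a\wedge b \in I \iff a \in I$ or $b \in I$. Passing to complements in $\sfS(\sL)$ yields $j(a\vee b) = j(a)\cup j(b)$ and $j(a\wedge b) = j(a)\cap j(b)$. Injectivity reduces to the separation content of the Priestley representation recalled above: whenever $a\not\le b$ in $\sL$ there exists a prime ideal $I$ with $a\notin I$ and $b\in I$.

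Stage (ii) uses compactness of the Priestley topology on $\sfS(\sL)$. The basis $\{j(a)\smin j(b)\mid a,b\in\sL\}$ consists of clopen sets, so any clopen set in $\sfS(\sL)$, and in particular any $U\in\sB(\sL)$, is a finite union of basic clopens and hence admits a representation
\[
U = \bigcup_{i=1}^{n}\bigl(j(a_i)\cap j(b_i)^c\bigr).
\]
Consequently $\sB(\sL)$ is the Boolean subalgebra of $\sSet(\sfS(\sL))$ generated by $j(\sL)$. This also forces uniqueness of $\sB(h)$ once it exists, since any Boolean extension of $h$ is compelled to send the displayed $U$ to $\bigvee_{i=1}^{n} h(a_i)\wedge h(b_i)^c$.

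Stage (iii) is the main obstacle. I would define
\[
\sB(h)(U) \;:=\; \bigvee_{i=1}^{n} h(a_i)\wedge h(b_i)^c \;\in\; \sE
\]
for $U$ presented as above, and verify well-definedness by working on $\sfS(\sE)$. For each $J\in \sfS(\sE)$ the preimage $h^{-1}(J)$ lies in $\sfS(\sL)$ because $h$ is a $(0,1)$-preserving lattice homomorphism. The equivalence $h^{-1}(J)\in j(a)\cap j(b)^c \iff h(a)\notin J$ and $h(b)\in J$, combined with primeness of $J$ in the Boolean algebra $\sE$ (so that $x\wedge y^c\notin J \iff x\notin J$ and $y\in J$), then yields
\[
\bigvee_{i=1}^{n} h(a_i)\wedge h(b_i)^c \;\notin\; J \;\iff\; h^{-1}(J)\in U.
\]
Thus the right-hand side of the definition is characterized solely by the clopen set $U$ and the fixed map $J\mapsto h^{-1}(J)$, and the Stone representation of $\sE$ shows that this characterization determines an element of $\sE$ uniquely. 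With well-definedness secured, the verifications that $\sB(h)$ preserves $\vee$, $\wedge$, and complementation (hence is Boolean), that $\sB(h)\circ j = h$, and that $\sB(h)$ is the unique Boolean extension of $h$, all reduce to routine checks on the generating system produced in stage (ii).
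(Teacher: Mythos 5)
Your proof is correct, but note that the paper itself offers no proof of this proposition: it is quoted verbatim as Theorem~10.19 of the cited reference [Roman], so there is no internal argument to compare against. Your three-stage route --- (i) $j$ is a $(0,1)$-lattice monomorphism via the prime/ideal characterizations and the prime ideal separation theorem, (ii) compactness of the Priestley topology forces every clopen set to be a finite union of the basic clopens $j(a)\smin j(b)$, so $\sB(\sL)$ is generated by $j(\sL)$, (iii) well-definedness of $\sB(h)$ checked pointwise on $\sfS(\sE)$ via $J\mapsto h^{-1}(J)$ and the injectivity of the Stone representation of $\sE$ --- is the standard Priestley--Stone argument and is exactly the machinery the paper sets up in Section~2 before stating the proposition (the compactness remark after the proposition, used to decompose $\sB(\sL)$ into elements of $\sB^{\updownarrow}(\sL)$, is your stage (ii)). The one point you should make explicit is that the proposition asserts uniqueness of $\sB(h)$ among \emph{lattice} homomorphisms, whereas your uniqueness argument is phrased for Boolean extensions; this is closed by the one-line observation that any $(0,1)$-lattice homomorphism between Boolean algebras automatically preserves complements, since the complement of $x$ is characterized by the lattice identities $x\wedge y=0$ and $x\vee y=1$, which are preserved. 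With that sentence added, the argument is complete; the remaining ingredients (compactness of $\sfS(\sL)$, the separation theorem for prime ideals) are nontrivial but are stated in the paper and legitimately taken as given.
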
 

\begin{remark}
Throughout the rest of this paper  
\[
j\colon \sL \to \sB(\sL)
\]
denotes the specific lattice monomorphism of Proposition~\ref{prop:booleanize2}. 
Furthermore, when we are explicitly working with this monomorphism given $a\in \sL$ we denote its image in $\sB(\sL)$ by $A$ so that $A=j(a)$.
\end{remark}

The Booleanization theorem above also applies to homomorphisms $h\colon\sK\to \sL$. Proposition \ref{prop:booleanize2} yields the following commutative diagram
\begin{equation}
\begin{diagram}
\label{commBool}
\node{\sK}\arrow{s,l}{j} \arrow{e,l}{h}\node{\sL}\arrow{s,r}{j}\\
\node{\sB(\sK)} \arrow{e,l}{\sB(h)}\node{\sB(\sL)}
\end{diagram}
\end{equation}
In particular $j\bigl(h(a)\bigr) = \sB(h)(A)$ where $A=j(a)$.

Booleanization is a (covariant) functor, and $\sB$ is obtained via the composition
$\sOcl\circ \sF\circ \sfS$, where $\sOcl$ is the clopen down-set functor, $\sF$ is the functor which
removes all order relations to produce the trivial order,
and $\sfS$ is the spectral functor. Moreover,
due to the compactness of the Priestley topology, each element of $\sB(\sL)$ can be written as a finite union of the  clopen convex sets
\[
\sB^\updownarrow(\sL) := \{ A \smin B~|~ A,B\in \sB^\downarrow(\sL)\},
\]
 cf.\ 
 \cite[Theorem~10.10]{Roman} and   \cite[Lemma~11.22]{Davey}.

For finite, distributive lattices the spectrum $\sfS(\sL)$ is order-isomorphic to the poset of join-irreducible elements $\sJ(\sL)$. A nonzero element $a\in\sL$ is {\em join-irreducible} if $a$ has a unique predecessor in $\sL$ which is denoted by $\pred{a}$. The order-isomorphism
$\sJ(\sL)\to\sfS(\sL)$ is given by the map
$
a\mapsto (\up a)^c.
$
Every element in $\sL$ can be written as a join of join-irreducible elements, for example
\[
a=\bigvee_{a'\le a\atop a'\in \sJ(\sL)} a'.
\]
Such join-representations are not unique, but each element has a unique irredundant join-representation, cf.\ \cite[Thm.\ 4.30]{Roman}.

\section{Lattice forms} 
\label{sec:MPbdL} 
In general, lattices do not allow complements, which makes it impossible to define the analogues of convex sets directly.
A first step towards defining
 an analogue of the set difference operation on bounded, distributive lattices
 are lattice forms.  

Given a lattice $\sL$ the set $\sL\times\sL$ has a natural meet semilattice structure defined by $(a,b)\wedge (c,d):= (a\wedge c,b\vee d)$, with neutral elements $0=(0,1)$ and $1=(1,0)$. 
It follows that 
\begin{equation}\label{eqn:order}
(a,b)\le (c,d) \hbox{ if and only if } a\le c \hbox{ and } b\ge d.
\end{equation}

\begin{definition}
\label{lem:mp11}
Let $\sL$ be a lattice, and let $\sI$ be a meet semilattice.
A  {\em lattice form on $\sL$ represented in $\sI$} is a function $\rho\colon\sL\times\sL \to \sI$ satisfying the property
\begin{enumerate}
\item[(Absorption)] $\rho(a\vee b, a) = \rho(b,a)$  and  $\rho(a,a \wedge b) = \rho(a,b)$ for all $a,b\in \sL$.
\end{enumerate}
\end{definition}

\begin{example}
\label{ex:X1}
If $\sL$ is a Boolean algebra, then $(a,b) \mapsto \rho(a,b)=a\smin b:=a\cap b^c$ defines a  lattice form represented in $\sL$.
This lattice form also satisfies the following properties
\begin{enumerate}
\item[(Distributivity)]  \; $\rho(a\wedge c,b\vee d) = \rho(a,b) \wedge \rho(c,d)$ for all $a,b,c,d\in \sL$;
\item[(Monotonicity)]  \; $\rho(a,b)=\rho(0,1)$ implies $a\le b$ for $a,b\in\sL$,
\end{enumerate}
which are called \emph{distributive} and \emph{monotone} lattice forms respectively.
A concrete example is the Boolean algebra consisting of subsets of a set $X$ denoted by $(\sSet(X),\cup,\cap,\,^c,\varnothing,X)$.
\end{example}

\begin{lemma}
\label{exchange}
From the distributivity property the following exchange property follows 
\begin{enumerate}
    \item[{\rm (Exchange)}] \: $\rho(a,b) \wedge \rho(c,d)
=\rho(a,d) \wedge \rho(c,b)$ for all $a,b,c,d\in \sL$.
\end{enumerate}
\end{lemma}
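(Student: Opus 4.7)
The plan is quite short because the Exchange identity follows almost immediately from Distributivity combined with the commutativity of the join operation. Let me lay it out.

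First I would apply the Distributivity axiom to the left-hand side of Exchange, which gives
\[
\rho(a,b)\wedge\rho(c,d) \;=\; \rho(a\wedge c,\, b\vee d).
\]
Next I would apply Distributivity to the right-hand side of Exchange with the roles of the second arguments swapped:
\[
\rho(a,d)\wedge\rho(c,b) \;=\; \rho(a\wedge c,\, d\vee b).
\]
Finally, since $\vee$ is commutative in $\sL$, we have $b\vee d = d\vee b$, so both expressions coincide, yielding Exchange.

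There is no real obstacle here; the lemma is essentially a formal consequence of Distributivity together with the commutativity of $\vee$. The only thing worth double-checking is that Distributivity, as stated in Example~\ref{ex:X1}, is indeed symmetric enough to be applied with $(a,c)$ in the first slot and either $(b,d)$ or $(d,b)$ in the second slot; the axiom as written imposes no ordering on the four arguments beyond the meet/join pairing, so this is immediate. No use of Absorption or Monotonicity, nor of the specific structure of $\sI$ beyond its being a meet semilattice, is needed.
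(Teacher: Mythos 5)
Your proof is correct and is essentially identical to the paper's: both apply Distributivity to each side of the Exchange identity and observe that the resulting expressions $\rho(a\wedge c, b\vee d)$ and $\rho(a\wedge c, d\vee b)$ coincide by commutativity of $\vee$. No issues.
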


\begin{proof}
From distributivity we have that 
\[
\rho(a,b) \wedge \rho(c,d) = \rho(a\wedge c,b\vee d) =\rho(a,d) \wedge \rho(c,b),
\]
which proves the lemma.
\end{proof}

\begin{proposition}\label{prop:LL}
If $\rho\colon\sL\times\sL\to \sI$ is a distributive lattice form, then $\rho$ is a meet semilattice homomorphism.
\end{proposition}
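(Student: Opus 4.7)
The plan is straightforward: to verify that $\rho$ preserves the meet operation by directly comparing the definition of the meet in $\sL\times\sL$ with the (Distributivity) axiom. The single key computation is to unravel $\rho\bigl((a,b)\wedge(c,d)\bigr)$. By the definition of the meet in the semilattice $\sL\times\sL$ recorded just before Definition~\ref{lem:mp11}, this equals $\rho(a\wedge c,\,b\vee d)$. By (Distributivity) the right-hand side equals $\rho(a,b)\wedge\rho(c,d)$. Hence $\rho$ commutes with meets, which is precisely what is required of a meet semilattice homomorphism.

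If one also wishes to verify that $\rho$ carries the top of $\sL\times\sL$ to a top in the image, I would argue as follows. From \eqref{eqn:order} the top of $\sL\times\sL$ is $(1,0)$, since $(a,b)\le(1,0)$ for every $(a,b)\in\sL\times\sL$. Applying distributivity with $(c,d)=(1,0)$ gives $\rho(a,b)\wedge\rho(1,0)=\rho(a\wedge 1,\,b\vee 0)=\rho(a,b)$, so $\rho(a,b)\le\rho(1,0)$ for every $(a,b)$. Thus $\rho(1,0)$ is the top of the image $\rho(\sL\times\sL)\subseteq\sI$.

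There is no substantive obstacle to this argument; the content of the proposition is entirely packaged in the (Distributivity) axiom together with the explicit formula for meets in $\sL\times\sL$. Note that neither (Absorption) nor (Monotonicity) is used, which makes clear that these two axioms carry independent information about $\rho$ beyond its being a semilattice morphism.
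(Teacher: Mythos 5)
Your argument is correct and coincides with the paper's own proof: both simply unwind the meet $(a,b)\wedge(c,d)=(a\wedge c,\,b\vee d)$ and invoke the (Distributivity) axiom. The additional observation that $\rho(1,0)$ is the top of the image is accurate (it is recorded separately in Proposition~\ref{lem:extraprop12}(ii)) but not needed for the statement.
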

\begin{proof}
By distributivity
\[
\rho((a,b)\wedge(c,d))=\rho(a\wedge c,b\vee d)=\rho(a,b)\wedge\rho(c,d),
\]
which proves that $\rho$ preserves meet operations.
\end{proof}

The following proposition lists a number of properties of distributive lattice forms.
\begin{proposition}
\label{lem:extraprop12}
Let $\rho\colon\sL\times \sL \to \sI$ be a distributive lattice form. 
Then, 
\begin{enumerate}
\item [\emph{(i)}] $\rho(a,b)\le \rho(c,d)$ for all $a\le c$ and $b\ge d$;
\item [\emph{(ii)}]  $\rho(0,1) \le \rho(a,b)\le \rho(1,0)$ for all $a,b\in \sL$;
\item [\emph{(iii)}] $\rho(0,a) =\rho(0,1)$ and $\rho(a,1)=\rho(0,1)$ for all $a\in \sL$.
\end{enumerate}
If in addition $\rho$ is monotone, then
\begin{enumerate}
\item [\emph{(iv)}] $\rho(a,b)=\rho(0,1)$ if and only if $a\le b$.
\end{enumerate}
\end{proposition}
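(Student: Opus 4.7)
The plan is to derive (i) and (ii) directly from distributivity, then establish (iii) purely from the absorption law, and finally use absorption together with (iii) to get (iv). None of these steps should require any machinery beyond what is encoded in the two axioms of a (distributive, monotone) lattice form.

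For (i), suppose $a\le c$ and $b\ge d$, so that $a\wedge c = a$ and $b\vee d = b$. Distributivity then gives
\[
\rho(a,b) \;=\; \rho(a\wedge c,\, b\vee d) \;=\; \rho(a,b)\wedge\rho(c,d),
\]
which is precisely $\rho(a,b)\le \rho(c,d)$. Part (ii) is then immediate from (i) applied with $a'=0\le a$ and $b'=1\ge b$ (and symmetrically on the other side).

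For (iii), the plan is to exploit the two absorption identities with an endpoint plugged in. Setting $a=0$ in $\rho(a,a\wedge b)=\rho(a,b)$ yields $\rho(0,0)=\rho(0,b)$ for every $b\in\sL$; specializing $b=1$ gives $\rho(0,0)=\rho(0,1)$, and therefore $\rho(0,b)=\rho(0,1)$ for all $b$. Dually, setting $a=1$ in $\rho(a\vee b,a)=\rho(b,a)$ yields $\rho(1,1)=\rho(b,1)$; taking $b=0$ shows $\rho(1,1)=\rho(0,1)$, so $\rho(a,1)=\rho(0,1)$ for all $a$.

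For (iv), only the direction ``$a\le b \Rightarrow \rho(a,b)=\rho(0,1)$'' needs proof, the converse being the monotonicity axiom itself. If $a\le b$ then $a\wedge b=a$, and absorption gives $\rho(a,b)=\rho(a,a\wedge b)=\rho(a,a)$; a second application of absorption with $b=0$ in $\rho(a\vee b,a)=\rho(b,a)$ shows $\rho(a,a)=\rho(0,a)$, which by (iii) equals $\rho(0,1)$. The only point requiring a moment's thought is bookkeeping: getting (iii) out of absorption alone (without invoking distributivity), and then noticing that (iv) reduces to a chain of absorptions plus (iii), rather than requiring the monotonicity hypothesis in a substantive way beyond the stated direction.
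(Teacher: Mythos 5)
Your proof is correct, and parts (i) and (ii) follow the paper's argument exactly: distributivity makes $\rho$ order-preserving for the order \eqref{eqn:order} on $\sL\times\sL$ (you unpack this directly via $\rho(a,b)=\rho(a,b)\wedge\rho(c,d)$, the paper cites Proposition~\ref{prop:LL}), and (ii) is the specialization to $0\le a\le 1$, $1\ge b\ge 0$. Where you genuinely diverge is in (iii) and the forward direction of (iv): the paper proves (iii) by first noting $\rho(1,1)=\rho(1\vee 0,1)=\rho(0,1)$ and then sandwiching, $\rho(0,1)\le\rho(0,a)\le\rho(a,a)=\rho(a,a\wedge 1)=\rho(a,1)\le\rho(1,1)=\rho(0,1)$, which uses the order-preservation of (i)--(ii); you instead extract (iii) from the two absorption identities alone, via $\rho(0,0)=\rho(0,b)$ and $\rho(1,1)=\rho(b,1)$, and then chain absorptions to get $\rho(a,b)=\rho(a,a)=\rho(0,a)=\rho(0,1)$ for (iv). Both routes are valid; yours is slightly more economical and shows that (iii) and the ``if'' direction of (iv) do not actually require distributivity, only absorption, whereas the paper's sandwich argument is shorter to state once (i) and (ii) are in hand. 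The one point worth double-checking in your write-up is the phrase ``rather than requiring the monotonicity hypothesis in a substantive way beyond the stated direction'': this is accurate --- monotonicity is used exactly once, as the converse implication in (iv) --- and matches the paper's usage.
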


\begin{proof}
(i) From distributivity it follows that $\rho$ is a meet semilattice homomorphism and thus order-preserving. The order on  $\sL\times \sL$ is given by \eqref{eqn:order}.

(ii) Apply (i) to the inequalities $0\le a \le 1$ and $1\ge b \ge 0$.

(iii) From absorption we have that $\rho(1,1)=\rho(1\vee 0,1)=\rho(0,1)$. Then by (i) and (ii) and absorption we have that 
\[
\rho(0,1) \le \rho(0,a) \le \rho(a,a) = \rho(a,a\wedge 1) = \rho(a,1) \le \rho(1,1) = \rho(0,1).
\]

(iv)  By (i) and (ii), 
$\rho(0,1) \le \rho(a,b) \le \rho(a,a) = \rho(0,1)$,
which shows that $\rho(a,b) = \rho(0,1)$.
The other direction is monotonicity
completing the proof.
\end{proof}

By Property (ii) in Proposition \ref{lem:extraprop12} the elements $0:=\rho(0,1)$ 
and $1:=\rho(1,0)$ are the  neutral elements in the meet semilattice $\rho(\sL\times\sL) \subset \sI$. However, in general
the semilattice $\sI$ need not have neutral elements, and if there are neutral elements they need not coincide with $(0,1)$ and $(1,0)$.

\begin{proposition}
\label{prop:restrictionMP}
Let  $\rho\colon \sL\times \sL\to \sI$ be a lattice form, and let $\sK\subset \sL$ be a sublattice.
Then, the form $\rho|_\sK\colon\sK\times\sK\to \sI$
defined by restriction
is a   lattice form on $\sK$. The properties of distributivity and monotonicity are also preserved under restriction.
\end{proposition}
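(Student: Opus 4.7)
The proof plan is essentially a verification that each defining axiom of a lattice form — absorption (and then also distributivity and monotonicity in the refined cases) — is an equation or implication whose data involves only the operations $\wedge$ and $\vee$, the order $\le$, the constants $0, 1$, and the values of $\rho$. Since in $\sBDLat$ a sublattice $\sK \subset \sL$ by definition is closed under these operations and contains the neutral elements (as stated in Section~\ref{sec:booldual}), every such identity that holds on $\sL \times \sL$ automatically holds on the restricted domain $\sK \times \sK$, with the values landing in the same semilattice $\sI$.

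First I would handle absorption. Given $a, b \in \sK$, note that $a \vee b \in \sK$ because $\sK$ is a sublattice, so $(a\vee b, a) \in \sK\times\sK$; hence
\[
\rho|_\sK(a\vee b, a) = \rho(a\vee b, a) = \rho(b, a) = \rho|_\sK(b, a),
\]
where the middle equality is the absorption property of $\rho$. The dual identity $\rho|_\sK(a, a\wedge b) = \rho|_\sK(a,b)$ is verified identically, using closure of $\sK$ under $\wedge$. This establishes that $\rho|_\sK$ is a lattice form on $\sK$.

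Next I would treat distributivity. For $a,b,c,d \in \sK$, the elements $a\wedge c$ and $b\vee d$ lie in $\sK$, so
\[
\rho|_\sK(a\wedge c, b\vee d) = \rho(a\wedge c, b\vee d) = \rho(a,b)\wedge \rho(c,d) = \rho|_\sK(a,b)\wedge \rho|_\sK(c,d),
\]
where the middle equality uses distributivity of $\rho$. Finally, for monotonicity: the distinguished elements $0,1$ of $\sK$ coincide with those of $\sL$, so if $\rho|_\sK(a,b) = \rho|_\sK(0,1)$ then $\rho(a,b) = \rho(0,1)$, and monotonicity of $\rho$ gives $a \le b$ in $\sL$; since the order on $\sK$ is inherited from $\sL$, this is also $a \le b$ in $\sK$.

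Since the verification reduces to observing that each defining condition of a lattice form is preserved by restricting the domain of the arguments to any subset closed under the relevant lattice operations, there is no substantive obstacle; the only point that requires mention is the convention (from the opening of Section~\ref{sec:booldual}) that sublattices in $\sBDLat$ contain $0$ and $1$, without which the monotonicity clause could not even be formulated for $\rho|_\sK$.
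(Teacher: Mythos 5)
Your proof is correct and follows the same approach as the paper, which simply observes that all three properties follow from $\sK$ being a sublattice of $\sL$; you have merely written out the routine verifications explicitly, including the worthwhile remark that the convention $0,1\in\sK$ is what makes the monotonicity clause meaningful for $\rho|_\sK$.
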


\begin{proof}
Absorption, distributivity, and monotonicity follow from the fact that $\sK$ is a sublattice $\sL$.
\end{proof}

\begin{definition}
\label{defn:canonicalCF}
Let $\sL$ be bounded, distributive lattice.  The \emph{canonical Conley form} on $\sL$ is defined by
\begin{equation}
\label{eqn:canMPa}
\begin{aligned}
\CCF\colon \sL\times\sL & \twoheadrightarrow \sB^\updownarrow(\sL) \\
(a,b) &\mapsto \CCF(a,b) := A\smin B, 
\end{aligned}
\end{equation}
where $A=j(a)$ and $B = j(b)$ and $j\colon \sL \to \sB(\sL)$ is as defined in Proposition~\ref{prop:booleanize2}.
\end{definition}

\begin{proposition}
\label{prop:mp11}
For a bounded, distributive lattice $\sL$ the canonical Conley form is a monotone, distributive lattice form.
\end{proposition}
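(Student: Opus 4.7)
The plan is to verify each of the three defining properties directly, using only that $j\colon\sL\to\sB(\sL)$ is a lattice monomorphism (Proposition~\ref{prop:booleanize2}) and that $\sB(\sL)$ is a Boolean algebra, so that set-difference obeys the usual Boolean identities $X\smin Y = X\wedge Y^c$ and $(Y\vee Z)^c = Y^c\wedge Z^c$.

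For \textbf{absorption}, I would compute
\[
\CCF(a\vee b,a) = j(a\vee b)\smin j(a) = \bigl(j(a)\vee j(b)\bigr)\wedge j(a)^c = \bigl(j(a)\wedge j(a)^c\bigr)\vee \bigl(j(b)\wedge j(a)^c\bigr) = j(b)\smin j(a) = \CCF(b,a),
\]
and similarly
\[
\CCF(a,a\wedge b) = j(a)\wedge \bigl(j(a)\wedge j(b)\bigr)^c = j(a)\wedge \bigl(j(a)^c\vee j(b)^c\bigr) = j(a)\smin j(b) = \CCF(a,b),
\]
each step using that $j$ preserves $\vee$ and $\wedge$.

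For \textbf{distributivity}, I would expand
\[
\CCF(a\wedge c, b\vee d) = \bigl(j(a)\wedge j(c)\bigr)\wedge\bigl(j(b)\vee j(d)\bigr)^c = j(a)\wedge j(c)\wedge j(b)^c\wedge j(d)^c,
\]
and, after rearranging, recognize this as $\bigl(j(a)\smin j(b)\bigr)\wedge\bigl(j(c)\smin j(d)\bigr) = \CCF(a,b)\wedge \CCF(c,d)$. This is purely Boolean and uses only that $j$ is a homomorphism.

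For \textbf{monotonicity}, suppose $\CCF(a,b) = \CCF(0,1)$. Since $j(0)=0_{\sB(\sL)}$ and $j(1)=1_{\sB(\sL)}$, we have $\CCF(0,1) = 0\smin 1 = 0_{\sB(\sL)}$, so $j(a)\wedge j(b)^c = 0$, equivalently $j(a)\le j(b)$ in $\sB(\sL)$. The final step is the standard observation that any injective lattice homomorphism reflects order: from $j(a)\le j(b)$ we get $j(a\wedge b) = j(a)\wedge j(b) = j(a)$, and by injectivity of $j$ this forces $a\wedge b = a$, i.e.\ $a\le b$.

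There is no real obstacle here — the entire statement reduces to routine Boolean identities once one has the monomorphism $j$ in hand; the only subtlety worth flagging is that monotonicity genuinely uses injectivity of $j$, not merely that it is a homomorphism, and this is exactly the content of the uniqueness clause in Proposition~\ref{prop:booleanize2}.
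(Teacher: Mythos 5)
Your proof is correct and follows essentially the same route as the paper: both verify absorption, distributivity, and monotonicity by direct Boolean computation in $\sB(\sL)$ using that $j$ is a lattice monomorphism, with monotonicity reducing to the fact that an injective homomorphism reflects order. The only cosmetic difference is that the paper writes the computations with set operations $\cup$, $\cap$, $\smin$ on the clopen sets $A=j(a)$, $B=j(b)$ rather than with $\vee$, $\wedge$, ${}^c$, and your explicit check that $\CCF(0,1)=\varnothing$ is left implicit there.
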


\begin{proof}
We first prove the absorption property.
Observe that
\[
\begin{aligned}
\CCF(a\vee b,a) & = (A\cup B)\smin A = B\smin A 
 = \CCF(b,a)
\end{aligned}
\]
and
\[
\begin{aligned}
\CCF(a,a\wedge b) & = A \smin (A\cap B) = A\smin B 
 = \CCF(a,b)
\end{aligned}
\]
As for distributivity and monotonicity we have: 
\[
\begin{aligned}
\CCF(a\wedge c, b\vee d) & = (A\cap C) \smin (B\cup D) = (A\smin B)\cap (C\smin D)
 = \CCF(a,b) \cap \CCF( c,d )
\end{aligned}
\]
so that distributivity is satisfied.
Observe that $\CCF(a,b)=A\smin B = \varnothing$ implies that $A\subseteq B$.
Since $j$ is a lattice monomorphism, we conclude that $a\leq b$. Hence, monotonicity is satisfied.
\end{proof}

\section{The Conley form on bounded, distributive lattices}
\label{CFdefn}

The canonical Conley form on a bounded distributive lattice $\sL$ takes values in $\sB^\updownarrow(\sL)$, an abstractly defined semilattice.
For applications it is desirable to represent this form in particular meet semilattices.
With this in mind let $\sI$ be a meet semilattice, and   $\gamma\colon \sB^\updownarrow(\sL)\rightarrowtail \sI$ be a meet injective semilattice homomorphism.
Define 
\begin{equation}
\label{eqn:canMPb}
\begin{aligned}
\CF\colon\sL\times\sL &\to \sI\\
(a,b)&\mapsto \CF(a,b) := \gamma(A\smin B)     
\end{aligned}
\end{equation}
and set $\sI_\CF := \gamma(\sB^\updownarrow(\sL))$.
Observe that since $\gamma$ is injective, $\gamma\colon \sB^\updownarrow(\sL)\to \sI_\CF$ is an isomorphism.

\begin{lemma}
\label{lemma:mp12as}
$\CF$ is a  monotone, distributive lattice form. 
\end{lemma}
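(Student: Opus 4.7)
The plan is to reduce the lemma to Proposition~\ref{prop:mp11} by pushing the three defining properties of $\CCF$ forward through $\gamma$. Since $\CF = \gamma\circ\CCF$ by construction, and we already know that $\CCF$ is a monotone, distributive lattice form, the only thing to verify is that each property is preserved under composition with an injective meet semilattice homomorphism.

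First I would handle absorption. The identities $\CCF(a\vee b,a) = \CCF(b,a)$ and $\CCF(a,a\wedge b) = \CCF(a,b)$ from Proposition~\ref{prop:mp11} give, after applying $\gamma$ to both sides,
\[
\CF(a\vee b,a) = \gamma\bigl(\CCF(a\vee b,a)\bigr) = \gamma\bigl(\CCF(b,a)\bigr) = \CF(b,a),
\]
and analogously $\CF(a,a\wedge b) = \CF(a,b)$. No injectivity is needed here.

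Next I would establish distributivity. From Proposition~\ref{prop:mp11} we have $\CCF(a\wedge c,b\vee d) = \CCF(a,b)\wedge \CCF(c,d)$ in $\sB^\updownarrow(\sL)$. Because $\gamma$ is a meet semilattice homomorphism, applying $\gamma$ to both sides yields
\[
\CF(a\wedge c,b\vee d) = \gamma\bigl(\CCF(a,b)\wedge \CCF(c,d)\bigr) = \gamma\bigl(\CCF(a,b)\bigr)\wedge \gamma\bigl(\CCF(c,d)\bigr) = \CF(a,b)\wedge \CF(c,d),
\]
which is distributivity.

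Finally, monotonicity is where injectivity of $\gamma$ enters (this is the only place where it is needed, and it is the subtlest of the three steps). Suppose $\CF(a,b) = \CF(0,1)$. Then $\gamma\bigl(\CCF(a,b)\bigr) = \gamma\bigl(\CCF(0,1)\bigr)$, and since $\gamma$ is injective we conclude $\CCF(a,b) = \CCF(0,1)$. Monotonicity of $\CCF$ from Proposition~\ref{prop:mp11} then gives $a\le b$. Combined with absorption and distributivity, this shows that $\CF$ is a monotone, distributive lattice form, completing the proof.
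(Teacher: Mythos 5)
Your proof is correct and follows exactly the paper's approach: the paper's own proof simply states that the three properties of the canonical Conley form are transferred to $\CF$ under the injection $\gamma$, which is precisely what you verify property by property. You correctly identify that injectivity of $\gamma$ is needed only for monotonicity, while absorption needs nothing and distributivity uses only that $\gamma$ is a meet semilattice homomorphism.
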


\begin{proof}
Since the canonical Conley form is distributive and monotone, the properties are transferred to $\CF$ under the injection $\gamma$ as in the following diagram

\begin{equation}
\label{eqn:AR33a}
\dgARROWLENGTH=3em
\begin{diagram}
\node{\sB^\downarrow(\sL)\times\sB^\downarrow(\sL)} \arrow{e,l,A}{\sB(\CCF)} \node{\sB^\updownarrow(\sL)} \arrow{s,lr}{\gamma }{\cong}\arrow{se,l,V}{\gamma}\\
\node{\sL\times \sL} \arrow{n,lr}{j\times j}{\cong}\arrow{e,l,A}{\CF} \arrow{ne,l,A}{\CCF} \node{\sI_\CF} \arrow{e,l,V}{}\node{\sI}
\end{diagram}
\end{equation}
where $\sB(\CCF)$ restricted to $\sB^\downarrow(\sL) \times \sB^\downarrow(\sL)$ is given by $(j(a),j(b))\mapsto j(a) \smin j(b)$ and is the Booleanization of $\CCF$ via the composition $\sL\times \sL 
\twoheadrightarrow \sB^\updownarrow(\sL) \rightarrowtail \sB(\sL)$.
\end{proof}

We now turn to the main result of this section that characterizes
monotone, distributive lattice forms as representations of the 
canonical Conley form in a given meet semilattice.

\begin{theorem}
\label{mainthm12}
Let $\sL$ be a bounded, distributive lattice and let $\sI$ be a meet semilattice. 
If $\gamma\colon\sB^\updownarrow(\sL)\to\sI$ is a meet injective semilattice homomorphism, then $\CF=\gamma\circ\CCF$ is a monotone, distributive lattice form. 
Conversely, if $\CF\colon\sL\times\sL\to\sI$ is a monotone, distributive lattice form, then there exists an injective meet semilattice homomorphism $\gamma\colon\sB^\updownarrow(\sL)\to\sI$ defined by
\begin{equation}
\label{twostepequiv3}
\gamma(A\smin B) := \CF(a,b),
\end{equation}
such that $\CF=\gamma\circ\CCF$.
\end{theorem}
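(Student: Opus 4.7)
The forward direction is essentially the content of Lemma~\ref{lemma:mp12as}: the composition $\gamma\circ\CCF$ automatically inherits absorption, distributivity, and monotonicity from $\CCF$ (Proposition~\ref{prop:mp11}), since these three properties are preserved by injective meet-homomorphisms. The real work is the converse, where I must construct $\gamma$ from $\CF$ and verify its properties.

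The plan for the converse is to define $\gamma$ by \eqref{twostepequiv3} and verify in turn: (i) well-definedness, (ii) the meet-preserving property, (iii) injectivity, and (iv) the factorization $\CF=\gamma\circ\CCF$ (immediate from the definition). The key technical ingredient is a characterization of the kernel of $\CCF$: for $a,b,a',b'\in\sL$,
\begin{equation*}
\CCF(a,b)=\CCF(a',b')\iff a\le a'\vee b,\ \ a'\le a\vee b',\ \ a\wedge b'\le b,\ \ a'\wedge b\le b'.
\end{equation*}
This follows by expanding $j(a)\cap j(b)^c=j(a')\cap j(b')^c$ in $\sB(\sL)$: for example, $j(a)\cap j(b)^c\subseteq j(a')$ rewrites as $j(a)\subseteq j(a')\cup j(b)=j(a'\vee b)$, and since $j$ is a lattice monomorphism this gives $a\le a'\vee b$; the other three inequalities follow symmetrically, and the converse direction is immediate.

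For well-definedness, assume the four inequalities hold; I aim to show $\CF(a,b)=\CF(a',b')$. By absorption I reduce to the case $b\le a$ and $b'\le a'$. Then $a\le a'\vee b$ combined with distributivity of $\sL$ yields $a=a\wedge(a'\vee b)=(a\wedge a')\vee b$, and symmetrically $a'=(a\wedge a')\vee b'$. Setting $c=a\wedge a'$, first absorption of $\CF$ gives $\CF(a,b)=\CF(c\vee b,b)=\CF(c,b)$ and $\CF(a',b')=\CF(c,b')$. Now the inclusions $c\wedge b\le a'\wedge b\le b'$ and $b\wedge b'\le a\wedge a'=c$ together force $c\wedge b=b\wedge b'$, and symmetrically $c\wedge b'=b\wedge b'$; two applications of second absorption yield $\CF(c,b)=\CF(c,b\wedge b')=\CF(c,b')$, completing the argument. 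Meet-preservation of $\gamma$ is then immediate: writing $X=\CCF(a,b)$ and $Y=\CCF(a',b')$, one has $X\cap Y=\CCF(a\wedge a',b\vee b')$, so by distributivity of both $\CCF$ and $\CF$, $\gamma(X\cap Y)=\CF(a\wedge a',b\vee b')=\CF(a,b)\wedge\CF(a',b')=\gamma(X)\wedge\gamma(Y)$.

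For injectivity, I would prove the converse of the kernel characterization at the level of $\CF$: if $\CF(a,b)=\CF(a',b')$, then the four inequalities hold (which, via the lemma applied to $\CCF$, yields $\CCF(a,b)=\CCF(a',b')$, i.e., $\gamma$-injectivity). For the first inequality, distributivity gives $\CF(a,a'\vee b)=\CF(a,b)\wedge\CF(a,a')=\CF(a',b')\wedge\CF(a,a')$; exchange (Lemma~\ref{exchange}) rewrites this as $\CF(a',a')\wedge\CF(a,b')$, which equals $\CF(0,1)\wedge\CF(a,b')=\CF(0,1)$ by Proposition~\ref{lem:extraprop12}(iv); monotonicity then gives $a\le a'\vee b$, and the remaining three inequalities follow by symmetric computations. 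The principal obstacle is the well-definedness step---specifically, the algebraic bookkeeping that exploits $a=(a\wedge a')\vee b$ and $c\wedge b=b\wedge b'=c\wedge b'$ to bring $\CF(a,b)$ and $\CF(a',b')$ to the common intermediate form $\CF(c,b\wedge b')$. Once the kernel characterization is in hand, the remainder unfolds by formal manipulation of absorption, distributivity, exchange, and monotonicity.
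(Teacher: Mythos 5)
Your proposal is correct, and it diverges from the paper's argument in one substantive place: the injectivity of $\gamma$. The paper (Lemma~\ref{twostepequiv2}) proves injectivity topologically: assuming $\gamma(C)=\gamma(D)$ with $C\neq D$, it uses the compactness of the Priestley space $\sfS(\sL)$ to write the clopen set $C\smin D$ as a finite union of elements of $\sB^\updownarrow(\sL)$ and derives a contradiction with $\gamma^{-1}(0)=\varnothing$; indeed the introduction singles out this compactness as the key ingredient of the proof. You instead establish the four-inequality characterization of the kernel of $\CCF$ ($a\le a'\vee b$, $a'\le a\vee b'$, $a\wedge b'\le b$, $a'\wedge b\le b'$) and then rederive these inequalities directly from $\CF(a,b)=\CF(a',b')$ using distributivity, the exchange identity of Lemma~\ref{exchange}, and Proposition~\ref{lem:extraprop12}; I checked the representative computation $\CF(a,a'\vee b)=\CF(a',b')\wedge\CF(a,a')=\CF(a',a')\wedge\CF(a,b')=\CF(0,1)$ and the analogous one for $a\wedge b'\le b$, and they go through. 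This buys a purely order-algebraic proof with no appeal to the Priestley topology, at the cost of somewhat more equational bookkeeping. Your well-definedness argument is also reorganized --- the paper reduces to the nested configuration $B\subset A\subset A'$, $B\subset B'\subset A'$ and extracts $a'=a\vee b'$, $b=a\wedge b'$, whereas you bring both sides to the common form $\CF(a\wedge a',\,b\wedge b')$ via the kernel inequalities --- but both use only absorption on the form side plus distributivity of $\sL$, so they are equivalent in substance. The forward direction and the meet-preservation of $\gamma$ coincide with the paper's treatment.
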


\begin{proof}
Combining Lemma~\ref{lemma:mp12as} with  Lemmas~\ref{twostepequiv1} and~\ref{twostepequiv2} below proves the theorem.
\end{proof}

\begin{lemma}
\label{twostepequiv1}
Let $\rho\colon\sL\times\sL \to \sI$ be a lattice form. 
Then, $\gamma\colon\sB^\updownarrow(\sL)\to \sI$ given by 
\[
\gamma(A\smin B):=\rho(a,b)
\]
is a well-defined function. 
\end{lemma}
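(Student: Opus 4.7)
The plan is to verify that the value $\rho(a,b)$ depends only on $A\smin B$, where $A=j(a)$ and $B=j(b)$: that is, if $A\smin B = A'\smin B'$ in $\sB(\sL)$, then $\rho(a,b)=\rho(a',b')$.

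First I would unwrap the Boolean equality $A\cap B^c = A'\cap B'^c$ back into $\sL$. Since $j$ is a lattice monomorphism, it preserves finite joins and meets and reflects the order. Thus the inclusion $A\cap B^c \subseteq A'$ becomes $j(a)\subseteq j(a')\cup j(b)=j(a'\vee b)$, hence $a\le a'\vee b$; the inclusion $A\cap B^c\subseteq B'^c$ becomes $j(a\wedge b')=j(a)\cap j(b')\subseteq j(b)$, hence $a\wedge b'\le b$. The reverse inclusion gives the symmetric pair, yielding
\begin{equation*}
\text{(I) } a\le a'\vee b,\quad \text{(II) } a\wedge b'\le b,\quad \text{(III) } a'\le a\vee b',\quad \text{(IV) } a'\wedge b\le b'.
\end{equation*}

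Second, I would bridge $(a,b)$ and $(a',b')$ through the intermediate pair $(a\vee a',\, b\vee b')$ via the absorption chain
\begin{equation*}
\rho(a,b) \;=\; \rho(a,\, b\vee b') \;=\; \rho(a\vee a',\, b\vee b') \;=\; \rho(a',\, b\vee b') \;=\; \rho(a',\, b').
\end{equation*}
For the first equality, distributivity of $\sL$ gives $a\wedge(b\vee b') = (a\wedge b)\vee(a\wedge b')$, and condition (II) forces this to coincide with $a\wedge b$; two applications of the second absorption law then yield $\rho(a,b)=\rho(a,a\wedge b)=\rho(a,a\wedge(b\vee b'))=\rho(a,b\vee b')$. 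The second equality is the first absorption law at second coordinate $b\vee b'$: the extra join in the first coordinate is absorbed because (III) guarantees $a\vee b\vee b' = a\vee a'\vee b\vee b'$. The third equality is symmetric, using (I) in place of (III). The fourth equality mirrors the first, combining distributivity with (IV) to give $a'\wedge(b\vee b')=a'\wedge b'$.

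Concatenating these equalities yields $\rho(a,b)=\rho(a',b')$ for any two representatives of the same element of $\sB^\updownarrow(\sL)$, so $\gamma$ is well-defined. The main obstacle is choosing an intermediate pair that genuinely lives in $\sL\times\sL$: the naive Boolean midpoint $(A\smin B,\varnothing)$ need not lie in $j(\sL)\times j(\sL)$, so the absorption identities are not directly applicable there, whereas $(a\vee a',\, b\vee b')$ is manifestly in $\sL\times\sL$ and is calibrated so that the inequalities (I)--(IV), together with distributivity of $\sL$, supply exactly the data needed to close each absorption step.
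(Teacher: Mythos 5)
Your proof is correct and follows essentially the same route as the paper's: both bridge the two representatives through the intermediate pair $(a\vee a', b\vee b')$, pull the Boolean identity back into $\sL$ via the monomorphism $j$, and close each comparison with the absorption laws. The only cosmetic difference is that the paper first normalizes to $b\le a$ and derives the exact identities $a'=a\vee b'$, $b=a\wedge b'$ in the nested case (using absorption alone), whereas you keep general representatives and invoke distributivity of $\sL$ to collapse $a\wedge(b\vee b')$ to $a\wedge b$ --- both are valid since $\sL$ is distributive.
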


\begin{proof}
We need to prove that if 
$A\smin B = A'\smin B'$, then $\rho(a,b) = \rho(a',b')$.
Observe that, 
since $\rho$ is a lattice form, we have $\rho(a,b) = \rho(a,a\wedge b)$
and thus we may assume without loss of generality, by possibly replacing $b$ by $a\wedge b$, that $b\le a$. The same holds for
$b'\le a'$.
Since $e:=A\smin B = A'\smin B'$ we have
\[
\begin{aligned}
(A\cup A')\smin (B\cup B') &= (A\cup A')\cap (B^c\cap B'^c)\\
&= \bigl((A\smin B) \cap B'^c \bigr)\cup \bigl((A'\smin B') \cap B^c \bigr)\\
&= (e\smin B') \cup (e\smin B) = e\cup e = e.
\end{aligned}
\]
Therefore assume without loss of generality that $B\subset A\subset A'$ and $B\subset B'\subset A'$. 
Since $A'=e\cup B'$ and $ A = e\cup A$, we have that
 $A\cup B' = e\cup A\cup B' = A'\cup A = A'$.
 Similarly, $A=e\cup B$ and thus $A\cap B' = (e\cup B)\cap B'
 =B$. This implies
\begin{equation}
    \label{equal12}
    a' = a\vee b'\quad \hbox{and}\quad b = a\wedge b'.
\end{equation}
Using the characterization in \eqref{equal12} and absorption, we have
\[
\rho(a',b') = \rho(a\vee b',b') = \rho(a,b') = \rho(a,a\wedge b') = \rho(a,b),
\]
which completes the proof.
\end{proof}

\begin{lemma}
\label{twostepequiv2}
Let $\CF\colon\sL\times\sL\to \sI$ be a monotone, distributive lattice form. 
Then, the map  
$\gamma\colon\sB^\updownarrow(\sL)\to \sI$   defined in \eqref{twostepequiv3} is an injective meet semilattice homomorphism.
\end{lemma}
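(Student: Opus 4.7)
The plan is to verify the two content claims separately: (1) $\gamma$ is a meet semilattice homomorphism, and (2) $\gamma$ is injective. Well-definedness has already been handled by Lemma~\ref{twostepequiv1} applied with $\rho=\CF$ (which is a lattice form by Lemma~\ref{lemma:mp12as}), so these are the only things left to check.

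For (1), I would compute directly. Using the identity $(A\smin B)\cap(A'\smin B') = (A\cap A')\smin(B\cup B')$ in the Boolean algebra $\sB(\sL)$, together with $A\cap A' = j(a\wedge a')$ and $B\cup B' = j(b\vee b')$ (since $j$ is a lattice monomorphism), the definition of $\gamma$ followed by distributivity of $\CF$ gives
\[
\gamma\bigl((A\smin B)\cap(A'\smin B')\bigr)=\CF(a\wedge a',b\vee b')=\CF(a,b)\wedge\CF(a',b')=\gamma(A\smin B)\wedge\gamma(A'\smin B').
\]

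For (2), the main obstacle, suppose $\gamma(A\smin B)=\gamma(A'\smin B')$, i.e., $\CF(a,b)=\CF(a',b')$. I want to conclude $A\smin B=A'\smin B'$ in $\sB(\sL)$. First I apply absorption to reduce (without loss of generality, on both $\CF$ and $\CCF$ sides) to $b\le a$ and $b'\le a'$, so that $B\subseteq A$ and $B'\subseteq B'$. In this regime, using that $j$ is a lattice monomorphism, the containment $A\smin B\subseteq A'\smin B'$ is equivalent to the pair of $\sL$-conditions $a\le a'\vee b$ and $a\wedge b'\le b$. The strategy is to derive each of these via Proposition~\ref{lem:extraprop12}(iv): compute an appropriate $\CF$ value, show it equals $\CF(0,1)$, and invoke monotonicity.

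The key algebraic move is that distributivity together with the hypothesis yields $\CF(a,b)=\CF(a,b)\wedge\CF(a',b')=\CF(a\wedge a',b\vee b')$. For the first condition, I expand
\[
\CF(a,a'\vee b)=\CF(a,a')\wedge\CF(a,b)=\CF(a,a')\wedge\CF(a\wedge a',b\vee b')=\CF(a\wedge a',a'\vee b\vee b').
\]
Since $a\wedge a'\le a'\le a'\vee b\vee b'$, Proposition~\ref{lem:extraprop12}(iv) gives $\CF(a,a'\vee b)=\CF(0,1)$, hence $a\le a'\vee b$ by monotonicity. For the second, I analogously compute
\[
\CF(a\wedge b',b)=\CF(a,b)\wedge\CF(b',0)=\CF(a\wedge a'\wedge b',b\vee b')=\CF(a\wedge b',b\vee b'),
\]
using $a'\wedge b'=b'$ (valid since $b'\le a'$). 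Because $a\wedge b'\le b'\le b\vee b'$, Proposition~\ref{lem:extraprop12}(iv) again gives $\CF(a\wedge b',b)=\CF(0,1)$, whence $a\wedge b'\le b$. The reverse inclusion $A'\smin B'\subseteq A\smin B$ then follows by swapping primed and unprimed roles, establishing equality.

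The hard part is locating the correct distributive expansions: the computation works precisely because after substituting $\CF(a,b)=\CF(a\wedge a',b\vee b')$ and combining with an auxiliary factor ($\CF(a,a')$ or $\CF(b',0)$), the resulting first argument is dominated by the second, so Proposition~\ref{lem:extraprop12}(iv) forces the value to be the minimal element $\CF(0,1)$. Once that telescoping is in place, monotonicity does the remaining work.
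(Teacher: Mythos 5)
Your proof is correct, and the injectivity argument takes a genuinely different route from the paper's. The meet-preservation computation is identical to the paper's. For injectivity, however, the paper argues by contradiction using the topology of the Priestley space: assuming $C=A\smin B\neq A'\smin B'=D$ but $\gamma(C)=\gamma(D)$, it writes the nonempty clopen set $C\smin D$ as a finite union of elements $E_i\in\sB^\updownarrow(\sL)$ (this is where compactness of $\sfS(\sL)$ enters, as flagged in the introduction), and then derives $\gamma(E_j)=0$ for some nonempty $E_j$, contradicting the fact that $\gamma^{-1}(0)=\varnothing$, which it first establishes via Proposition~\ref{lem:extraprop12}(iv). You instead stay entirely inside $\sL$: after the absorption reduction to $b\le a$, $b'\le a'$, you translate the Boolean containment $A\smin B\subseteq A'\smin B'$ into the two lattice inequalities $a\le a'\vee b$ and $a\wedge b'\le b$ (using only that $j$ is an order-embedding), and you derive each from $\CF(a,b)=\CF(a\wedge a',b\vee b')$ by a distributivity telescoping that lands in a value forced to equal $\CF(0,1)$ by Proposition~\ref{lem:extraprop12}(iv), whence monotonicity finishes. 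I checked both telescoping computations and the Boolean-to-lattice translation; they are correct (note the typo ``$B'\subseteq B'$'' for $B'\subseteq A'$, and that only $b'\le a'$ is actually used in one direction, with $b\le a$ used in the symmetric one). What your approach buys is a purely equational proof that avoids the compactness of the Priestley topology altogether; what the paper's approach buys is that it works uniformly at the level of $\sB^\updownarrow(\sL)$ without unwinding elements back into $\sL$, which fits its subsequent emphasis on the clopen convex sets.
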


\begin{proof}
We start with showing that $\gamma$ preserves the meet operation.
By Proposition~\ref{prop:LL},
both $\CCF$ and $\CF$ induce meet semilattice homomorphisms $\sL\times\sL\to\sI$.
Then,
\[
\begin{aligned}
\gamma\bigl((A\smin B) \cap (C\smin D)\bigr) &= \gamma\bigl((A\cap C)\smin (B\cup D) \bigr) 
= \CF(a\wedge c,b\vee d)\\
&= \CF(a,b)\wedge \CF(c,d) 
= \gamma(A\smin B) \wedge\gamma(C\smin D).
\end{aligned}
\]
By Proposition~\ref{lem:extraprop12}(ii), the function $\gamma$
satisfies $\gamma(\varnothing) = \CF(0,1)=0$ and $\gamma(\sfS(\sL)) = \CF(1,0)=1$, the neutral elements in the range $\gamma(\sB^\updownarrow(\sL))$. 
Moreover,  Proposition~\ref{lem:extraprop12}(iv) implies
 $\gamma(A\smin B) = \CF(a,b) = 0$ if and only if $a\leq b$ if and only if $A\smin B = \CCF(a,b) = \varnothing$.
Thus, $\gamma^{-1}(0) = \varnothing$.

It remains to show that $\gamma$ is injective.
Suppose $a,b,a',b'\in\sL$ such that 
\[
\CF(a,b) = \gamma(A\smin B) = \gamma(A'\smin B') = \CF(a',b') \quad \hbox{for} \quad A\smin B \neq A'\smin B'.
\]
Since $\gamma^{-1}(0) = \varnothing$, it follows that $A\smin B \neq \varnothing \neq A'\smin B'$ and $\CF(a,b) = \CF(a',b') \neq 0$. Let $C=A\smin B, D=A'\smin B' \in \sB^\updownarrow(\sL)$, then $\gamma(C) =\gamma(D) \not = 0$.
Recall that $\sB^\updownarrow(\sL) \subset \sB(\sL)$ and thus 
\[
(C\cup D) \smin (C\cap D) = (C\smin D) \cup (D\smin C)\neq \varnothing,
\]
since $C \neq D$.
Therefore, either $C\smin  D\neq\varnothing$ or $D\smin C \neq\varnothing$,
and we assume without loss
of generality the former holds.
From the description of the Priestley topology in Section~\ref{sec:booldual} every clopen subset of $\sfS(\sL)$ 
is a finite union of elements of $\sB^\updownarrow(\sL)$. Therefore there exist sets $\{E_i\in\sB^\updownarrow(\sL)~|~i=1,\ldots,n\}$
such that $C\smin D=\bigcup_i E_i$.
This implies that for 
$j\in\{1,\ldots,n\}$
\[
\varnothing \neq E_j \subset C\quad\text{and}\quad E_j\cap D = \varnothing.
\]
Observe that, since $\gamma$ is a semilattice homomorphism,
\[
\gamma(E_j)=\gamma(E_j\cap C)=\gamma(E_j)\cap\gamma(C)=\gamma(E_j)\cap\gamma(D)=\gamma(E_j\cap D)=\gamma(\varnothing)=0,
\]
which  is a contradiction since $\gamma^{-1}(0) = \varnothing$.
\end{proof}

\begin{corollary}
\label{charequivCF}
Suppose $\sI,\sI'$ are meet semilattices,  $\CF,\CF'\colon\sL\times\sL\to \sI,\sI'$ are monotone, distributive lattice  
forms.
Let $\gamma,\gamma'\colon \sB^\updownarrow(\sL)\rightarrowtail \sI,\sI'$  be the meet injective semilattice homomorphisms given by Theorem \ref{mainthm12}.
Then, $\CF'=g\circ\CF$ where 
\begin{equation}
\label{transition}
g=\gamma'\circ\gamma^{-1}\colon \sI_\CF \to \sI'_{\CF'},
\end{equation}
is an isomorophism.
\end{corollary}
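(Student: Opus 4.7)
The proof should follow almost directly from Theorem~\ref{mainthm12}, which is the substantive result; the corollary is essentially a statement about how two representations of the same canonical form must be intertwined. The plan is to first invoke Theorem~\ref{mainthm12} to factor both $\CF$ and $\CF'$ through the canonical Conley form $\CCF\colon \sL\times\sL \twoheadrightarrow \sB^\updownarrow(\sL)$, namely $\CF=\gamma\circ\CCF$ and $\CF'=\gamma'\circ\CCF$.

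Next I would observe that $\gamma$, as an injective meet semilattice homomorphism onto its image $\sI_\CF=\gamma(\sB^\updownarrow(\sL))$, is in fact a meet semilattice isomorphism $\gamma\colon \sB^\updownarrow(\sL)\to \sI_\CF$, and the same holds for $\gamma'\colon\sB^\updownarrow(\sL)\to\sI'_{\CF'}$. Consequently $\gamma^{-1}\colon \sI_\CF\to\sB^\updownarrow(\sL)$ is a meet semilattice isomorphism, and therefore $g=\gamma'\circ\gamma^{-1}\colon \sI_\CF\to\sI'_{\CF'}$ is a composition of meet semilattice isomorphisms, hence itself an isomorphism.

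Finally I would verify the intertwining relation by direct computation: for any $(a,b)\in\sL\times\sL$,
\[
g\bigl(\CF(a,b)\bigr) = \gamma'\bigl(\gamma^{-1}(\gamma(\CCF(a,b)))\bigr) = \gamma'\bigl(\CCF(a,b)\bigr) = \CF'(a,b),
\]
so $\CF'=g\circ\CF$ on $\sL\times\sL$ as required. Since there is no real obstacle here (the heavy lifting was done in Theorem~\ref{mainthm12}), the only care needed is to verify that $\gamma^{-1}$ is well-defined as a semilattice isomorphism on $\sI_\CF$, which follows from injectivity of $\gamma$ together with the fact that $\gamma$ preserves meets and the neutral elements $0$ and $1$ of $\sB^\updownarrow(\sL)$ as established in the proof of Lemma~\ref{twostepequiv2}.
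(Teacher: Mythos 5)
Your proposal is correct and is precisely the argument the paper intends: the corollary is stated without proof as an immediate consequence of Theorem~\ref{mainthm12}, and your factorization $\CF=\gamma\circ\CCF$, $\CF'=\gamma'\circ\CCF$ together with the observation that $\gamma,\gamma'$ are isomorphisms onto their images $\sI_\CF,\sI'_{\CF'}$ is exactly the intended reasoning. No gaps.
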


There exits only one monotone, distributive lattice form up to isomorphisms which yields the equivalence class of monotone, distributive lattice forms and leads to the following definition.

\begin{definition}\label{CF}
Let $\sL$ be a bounded, distributive lattice, and let $\gamma \colon\sB^\updownarrow(\sL)\to\sI$ be a injective meet semilattice homomorphism.
The {\em Conley form on $\sL$ via $\gamma$} is 
\[
\CF := \gamma\circ\CCF \colon \sL\times\sL\to \gamma(\CCF(\sL\times\sL)) = \sI_\CF.
\]
Often it is the meet semilattice $\sI$ that is important, and the specific map $\gamma$ is implicitly defined from Theorem~\ref{mainthm12}, in which case we refer to {\em a representation of the Conley form in $\sI$.} 
If there is no ambiguity about the semilattice $\sI$ or homomorphism $\gamma$ we simply write $a-b:= \CF(a,b)$ to denote the Conley form for ease of notation.
\end{definition}

\begin{remark}
\label{rem:c0Boolean}
Observe that if $\sL$  is embedded in a Boolean algebra $\sE$, then 
there is a natural representation of the Conley form 
in $\sE$ itself with $\CF^b\colon\sL\times\sL\to\sE$ given by
 \[
 \CF^b(a,b)=a\smin b
 \]
as in Example~\ref{ex:X1}. The Conley form $\CF^b$ also implies a natural decomposition of
elements in $\sL$ which have a finite join-representation of the form $a=\bigvee_{a'\le a\atop a'\in \sJ(\sL)} a'$.
For such elements $a=\bigvee_{a'\le a\atop a'\in \sJ(\sL)} (a'\smin \pred a')$
\end{remark}

The decomposition given in Remark \ref{rem:c0Boolean} can be extended to lattices embedded into another lattice where the Boolean structure is replaced by a lattice form.
Let $\sL$ and $\sK$ be bounded distributive lattices with  $\sL\subset \sK$ and let
$\rho\colon \sL\times \sL \to \sK$ be a lattice form with the following additivity property
\begin{enumerate}
    \item[(Additivity)] $\rho(a,b)\vee b = a$, for all $b\le a$.
\end{enumerate}
This yields the following extension of the decomposition statement in Remark \ref{rem:c0Boolean}.
\begin{proposition}
\label{decompinlat}
If $a=\bigvee_{a'\le a\atop a'\in \sJ(\sL)} a'$ is finite join-representation, then
\begin{equation}
    \label{decompinlat2}
a= \bigvee_{a'\le a\atop a'\in \sJ(\sL)}\rho(a',\pred a') = \bigvee_{a'\le a\atop a'\in \sJ(\sL)}
\gamma(a'-\pred a'),
\end{equation}
where $a-b$ is the Conley form on $\sL$ in a semilattice $\sI$ and $\gamma\colon \sI \to \sK$ is given in Lemma \ref{twostepequiv1} and Corollary \ref{charequivCF}.
\end{proposition}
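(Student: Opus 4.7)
The plan is to handle the two equalities separately. The second, $\rho(a',\pred{a'}) = \gamma(a' - \pred{a'})$, follows directly from the construction of $\gamma$ in Lemma~\ref{twostepequiv1} together with Corollary~\ref{charequivCF}: by definition $\gamma(A' \smin \pred{A'}) = \rho(a',\pred{a'})$, and the symbol $a' - \pred{a'}$ is simply the Conley-form notation for $\CCF(a',\pred{a'}) = A' \smin \pred{A'}$. The substance of the proof lies in the first equality $a = \bigvee_{a'\in P_a} \rho(a',\pred{a'})$, where I write $P_a := \{a' \in \sJ(\sL) : a' \le a\}$ and denote the right-hand side by $S$.

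First I would dispose of the easy inequality $S \le a$ using additivity: each $\rho(a',\pred{a'}) \le \rho(a',\pred{a'}) \vee \pred{a'} = a' \le a$, so the join $S$ is also $\le a$. For the reverse inequality, I would induct on $|P_a|$, with the trivial base case $|P_a| = 0$. In the inductive step, pick a maximal element $a_0 \in P_a$ and set $c_0 := \bigvee_{a' \in P_a \setminus \{a_0\}} a'$. A short argument gives $P_{c_0} = P_a \setminus \{a_0\}$: any $b' \in \sJ(\sL)$ with $b' \le c_0$ lies below some $a' \in P_a \setminus \{a_0\}$ by join-irreducibility, and $b' = a_0$ is excluded since maximality of $a_0$ would then force $a_0 = a'$. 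The inductive hypothesis therefore applies to $c_0$ and yields $c_0 = \bigvee_{a' \in P_a \setminus \{a_0\}} \rho(a',\pred{a'})$. Combining with the additivity identity $a_0 = \rho(a_0,\pred{a_0}) \vee \pred{a_0}$ gives
\[
a \;=\; a_0 \vee c_0 \;=\; \rho(a_0,\pred{a_0}) \vee \pred{a_0} \vee c_0.
\]

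To close the induction it remains to absorb the middle term, i.e.\ to establish $\pred{a_0} \le c_0$, and this will be the main obstacle. My argument is the following: any join-irreducible $b' \in \sJ(\sL)$ with $b' \le \pred{a_0}$ automatically satisfies $b' < a_0 \le a$, so $b' \in P_a \setminus \{a_0\}$ and $b' \le c_0$. Because $\pred{a_0} \le a$ and the hypothesis gives $a$ as a finite join of join-irreducibles, the same finiteness descends to $\pred{a_0}$, which is then recovered as the join of those $b' \in \sJ(\sL)$ lying below it; together with the previous observation this forces $\pred{a_0} \le c_0$. Once this is in hand, the display collapses to $a = \rho(a_0,\pred{a_0}) \vee c_0 = S$, completing the induction and the proof.
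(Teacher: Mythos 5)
Your proof follows essentially the same route as the paper's: pick a maximal join-irreducible $a_0$ below $a$, use additivity to write $a_0=\rho(a_0,\pred a_0)\vee\pred a_0$, absorb $\pred a_0$ into the join of the remaining terms, and induct on the number of join-irreducibles. You are in fact more explicit than the paper at the one delicate step, $\pred{a_0}\le c_0$, which the paper simply asserts; note only that your justification quietly assumes $\pred{a_0}$ is itself the join of the join-irreducibles below it, which is not a formal consequence of the stated hypothesis on $a$ alone, but is exactly the implicit assumption the paper's own proof makes and is harmless in the finite-lattice setting where the proposition is applied.
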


\begin{proof}
Let $a''$ be a maximal element in $\{a'\in \sJ(\sL)~|~a'\le a\}$, then $\pred a'' \le 
\bigvee \{a'\in \sJ(\sL)~|~a'\le a, a'\not = a''\}$. The additivity property of $\rho$ and induction on $a'$ give
\[
\begin{aligned}
a &= a'' \vee \bigvee \{a'\in \sJ(\sL)~|~a'\le a, a'\not = a''\}\\
&= 
\rho(a'',\pred a'') \vee \bigvee \{a'\in \sJ(\sL)~|~a'\le a, a'\not = a''\}
=  \bigvee_{a'\le a\atop a'\in \sJ(\sL)}\rho(a',\pred a').
\end{aligned}
\]
The latter statement in \eqref{decompinlat2} follows from Lemma \ref{twostepequiv1} and Corollary \ref{charequivCF}.
\end{proof}

\begin{remark}
\label{embedding}
If $\CF$ is a Conley form on $\sL$ represented in a semilattice $\sI$, then $\sL$ is naturally embedded in the convexity semilattice $\sI_\CF$, it has a natural dual lattice in $\sI_\CF$ and the notion of `complement' or `dual' is well-defined. 
The embedding of $\sL$ into $\sI$ is given by $a\mapsto \CF(a,0)$ and the dual of $a$ is defined as $a^*:= \CF(1,a)$. The dual lattice is given as $\sL^*=\{a^*~|~a\in \sL\}$.
As a consequence, $\sL$ and $\sL^*$ may be regarded as lattices in the same `universe' $\sI$.
Note that from distributivity we have that
\[
\CF(a,b) = \CF(a\wedge 1, 0\vee b) = \CF(a,0) \wedge \CF(1,b) = a\wedge b^*,
\]
which proves that every Conley form is can be characterized this way. For a homomorphism $h\colon \sK\to \sL$ there exists an induced dual anti-homomorphism $h^*\colon \sK^* \to \sL^*$ given by
$h^*(a^*) = h(a)^*$ and
\begin{equation}
\begin{diagram}
\label{commdual}
\node{\sK}\arrow{s,l}{h} \arrow{e,l}{^*}\node{\sK^*}\arrow{s,r}{h^*}\\
\node{\sL} \arrow{e,l}{^*}\node{\sL^*}
\end{diagram}
\end{equation}
\end{remark}

\section{Examples of Conley forms}
\label{exoflf}
In this paper we are interested in representations of the Conley form in the context of lattices of attractors and repellers, and we now provide some examples.

\begin{example}
\label{ex:X3}
In the context of invertible dynamical systems, 
attractors, repellers, and invariant sets all have lattice structures induced by intersection and union in the Boolean algebra $\sSet(X)$.
Indeed, $\sInvset(\varphi)$ is a complete (atomic) Boolean
subalgebra of $\sSet(X)$, and it contains all attractors and repellers, cf.\ App.\ \ref{sec:dynamics}.
Therefore, the Booleanizations of these lattices are isomorphic to a 
subalgebra of $\sInvset(\varphi)$ by Proposition~\ref{prop:booleanize2}.
In particular, in light of Remark~\ref{rem:c0Boolean}
\begin{equation}\label{eqn:rho}
\begin{aligned}
\CF^b \colon \sInvset(\varphi)\times \sInvset(\varphi) &\to \sInvset(\varphi) \\
(S,S') &\mapsto \CF^b(S,S') = S\smin S'.
\end{aligned}
\end{equation}
By Proposition~\ref{prop:restrictionMP} the restrictions of $\CF^b$ to 
$\sAtt(\varphi)$ and $\sRep(\varphi)$ are representations of the Conley forms 
of these lattices in $\sInvset(\varphi)$. 
\end{example}

\begin{example}
\label{ex:X4}
Let $\varphi\colon\T\times X\to X$ be an invertible dynamical system on a compact metric space.
Let $S\subset X$ be a compact invariant set and define the unstable set: $ W^u(S) := \setof{x\in X\mid \alpha(x)\subset S}$, cf.\ App.\ \ref{sec:topdyn}. For compact invariant sets $S,S'$ we have 
\[
W^u(S\cap A') = W^u(S)\cap W^u(S'),
\]
cf.\ Lemma \ref{lem:stlatthom} and Remark \ref{stunsth}.
By Example~\ref{ex:X3}, $\CF^b\colon \sAtt(\varphi)\times \sAtt(\varphi) \to \IS(\varphi)$ given in \eqref{eqn:rho} is a representation of the Conley form on $\sAtt(\varphi)$ in $\IS(\varphi)$.
To obtain an explict formula for $\CF^b$ in terms of $W^u$,
observe that if $A\in \sAtt(\varphi)$, then $W^u(A) = A$, and furthermore, by \cite[Theorem 3.19]{KMV-1a} $A^c = W^u(A^*)$ where $A^*$ is the dual repeller of $A$.
Therefore,
\[
\begin{aligned}
\CF^b(A,A')  &= A\smin A' = A \cap A'^c\\ &= W^u(A) \cap W^u\left( A'^*\right)\\ &= W^u( A\cap A'^*).
\end{aligned}
\]
Clearly, $A\cap A'^*\in \sMorse(\varphi)=\{A\cap R~|~A\in\sAtt(\varphi), R\in\sRep(\varphi)\}$.
Since $\sMorse(\varphi)$ is a subsemilattice of $\sInvset(\varphi)$ and $W^u\colon \sMorse(\varphi) \to \IS(\varphi)$ is injective, cf.\  Lemma \ref{injstunst},
\begin{equation}
\label{CFinvert}
\CF_\sAtt(A,A') = A-A' := A\cap A'^*
\end{equation}
is another (isomorphic)  representation of the Conley form of $\sAtt(\varphi)$ in $\IS(\varphi)$, cf.\ Theorem \ref{mainthm12} and Corollary \ref{charequivCF}. 
Since the dual operator $^*\colon\sAtt(\varphi)\to\sRep(\varphi)$  is an anti-isomorphism, c.f. \cite[Proposition 4.7]{KMV-1a}, 
\[
\sMorse(\varphi)=\CF_\sAtt(\sAtt(\varphi)\times\sAtt(\varphi)).
\]
\end{example}

\begin{example}\label{CFcomb}
Consider a binary relation $\cF\subset \cX\times \cX$ on a finite set $\cX$, see App.\ \ref{sec:combdyn}. 
Theorem~\ref{CFforref} establishes that the lattice form $\CF_\sAtt(\cA,\cA') :=\cA\cap \cA'^*$ is a representation of the Conley form on $\sAtt(\cF)$ in $\sInvset(\cF)$. 
By \cite[Diagram (5)]{KMV-1b} the dual operator $^*\colon\sAtt(\cF)\to\sRep(\cF)$  is an anti-isomorphism,
therefore, as in Example~\ref{ex:X4} 
\[
\CF_\sAtt(\sAtt(\cF)\times\sAtt(\cF))= \sMorse(\cF):=\{\cA\cap\cR~|~\cA\in\sAtt(\cF), \cR\in\sRep(\cF)\},
\]
where sets of the form $\cM=\cA\cap\cR$ are called \emph{Morse sets}.
\end{example}

\begin{lemma}
\label{invMorse}
Morse sets are invariant.
\end{lemma}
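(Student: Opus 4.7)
My plan is to reduce the lemma to a one-line observation using the facts already in place about $\sInvset$. Concretely, a Morse set $\cM = \cA \cap \cR$ (resp.\ $M = A \cap A'^*$) is an intersection of an attractor and a repeller. In Example~\ref{ex:X3} it is recorded that $\sInvset(\varphi)$ is a (complete, atomic) Boolean subalgebra of $\sSet(X)$ and that it contains both $\sAtt(\varphi)$ and $\sRep(\varphi)$ as sublattices; in particular $\sInvset(\varphi)$ is closed under finite intersection. The analogous statement for the combinatorial setting, which is the one most directly relevant to Example~\ref{CFcomb}, is the content of \cite[Diagram (5)]{KMV-1b} together with the fact that $\sInvset(\cF) = \sInvset^+(\cF) \cap \sInvset^-(\cF)$ is a sublattice of $\sSet(\cX)$ containing $\sAtt(\cF)$ and $\sRep(\cF)$.

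Having set this up, the argument is immediate: if $\cA \in \sAtt(\cF) \subset \sInvset(\cF)$ and $\cR \in \sRep(\cF) \subset \sInvset(\cF)$, then $\cA \cap \cR$ lies in $\sInvset(\cF)$ since the latter is closed under meets, so it is invariant. The identical sentence handles the continuous case with $(A, A'^*)$ in place of $(\cA, \cR)$.

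There is no genuine obstacle; the only mild point worth flagging explicitly in the write-up is that the lemma is intended to cover both Example~\ref{ex:X4} and Example~\ref{CFcomb}, so I would split the short proof into two sentences, one per setting, each citing the appropriate inclusion of attractors and repellers into the corresponding lattice of invariant sets. No further structure (e.g.\ characterization via orbits, compactness arguments in the continuous case) is needed, since invariance has already been absorbed into the algebraic statement ``$\sAtt, \sRep \subseteq \sInvset$'' used in Example~\ref{ex:X3}.
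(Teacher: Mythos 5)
There is a genuine gap, and it sits exactly where this lemma lives. Lemma~\ref{invMorse} is stated and used in the \emph{combinatorial} setting of Example~\ref{CFcomb}, and there your key premise fails: as recorded in Appendix~\ref{sec:combdyn}, attractors and repellers of a relation $\cF$ are \emph{not} necessarily invariant sets, so the inclusions $\sAtt(\cF)\subset\sInvset(\cF)$ and $\sRep(\cF)\subset\sInvset(\cF)$ that your one-line argument rests on are false in general. Concretely, $\cF(\cA)=\cA$ gives $\cA\subset\cF(\cA)$ but not $\cA\subset\cF^{-1}(\cA)$; e.g.\ for $\cX=\{1,2\}$ and $\cF=\{(1,1),(1,2)\}$ the set $\cA=\{1,2\}$ is an attractor with $\cF^{-1}(\cA)=\{1\}$, so $\cA\notin\sInvset(\cF)$. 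Two further identifications you make are also unavailable: $\sInvset(\cF)$ is not $\sInvset^+(\cF)\cap\sInvset^-(\cF)$ (the latter consists of sets with $\cF(\cU)\subset\cU$ and $\cF^{-1}(\cU)\subset\cU$, i.e.\ the inclusions point the wrong way), and $\sInvset(\cF)$ is not closed under set intersection --- its meet is $\Inv(\cdot\cap\cdot)$, which is precisely why the paper must prove Lemma~\ref{charitoinv12} separately. The same caveat applies to noninvertible continuous systems (Example~\ref{ex:X4b} notes that $\sInvset(\varphi)$ is then not a sublattice of $\sSet(X)$); your argument is sound only in the invertible continuous case of Example~\ref{ex:X3}, which is not what this lemma is for.

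The point of the lemma is that the intersection $\cA\cap\cR$ is invariant even though neither factor need be, and this requires the interplay of the two defining properties. The paper's proof is an element chase: given $\xi\in\cA\cap\cR$, the identity $\cF(\cA)=\cA$ produces $\eta\in\cA$ with $\xi\in\cF(\eta)$; then $\eta\in\cF^{-1}(\xi)\subset\cF^{-1}(\cR)=\cR$, so $\eta\in\cA\cap\cR$ and hence $\xi\in\cF(\cA\cap\cR)$, proving $\cA\cap\cR\subset\cF(\cA\cap\cR)$; the symmetric argument with the roles of $\cF$ and $\cF^{-1}$ exchanged gives $\cA\cap\cR\subset\cF^{-1}(\cA\cap\cR)$. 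You need an argument of this kind; the purely lattice-theoretic reduction does not go through.
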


\begin{proof}
Suppose $\xi\in\cA\cap\cR$.
Since $\cA\in \sAtt(\cF)$, we have $\cF(\cA)=\cA$.
Therefore, there exists $\eta\in\cA$ such that $\xi\in\cF(\eta)$.
Similarly, $\cR\in \sRep(\cF)$ and hence $\cF^{-1}(\cR)=\cR$.
Thus, $\eta\in\cF^{-1}(\xi)\subset\cF^{-1}(\cR)=\cR$, and hence $\eta\in\cA\cap\cR$ and $\xi\in\cF(\cA\cap\cR)$. 
Therefore $\cA\cap\cR\subset\cF(\cA\cap\cR)$. 
The same argument applied to $\cF^{-1}$ gives $\cA\cap\cR\subset\cF^{-1}(\cA\cap\cR)$, which implies that $\cA\cap\cR\in \sInvset(\cF)$  by \cite[Proposition~3.4]{KMV-0}.
\end{proof}

\begin{lemma}
\label{charitoinv12}
Let $\cU\in \IS^+(\cF)$ and $\cV\in \IS^-(\cF)$. Then,
 $\Inv(\cU \cap \cV) = \bomega(\cU)\cap \balpha(\cV)$.
\end{lemma}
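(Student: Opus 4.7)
The plan is to prove the two set inclusions separately, using the facts from the appendix that $\bomega(\cU) \in \sAtt(\cF)$ is the maximal invariant set contained in the forward invariant set $\cU$, and dually $\balpha(\cV) \in \sRep(\cF)$ is the maximal invariant set contained in the backward invariant set $\cV$.

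For the inclusion $\bomega(\cU)\cap\balpha(\cV)\subseteq \Inv(\cU\cap\cV)$, I would first observe that $\bomega(\cU)\subseteq\cU$ and $\balpha(\cV)\subseteq\cV$, so the intersection lies in $\cU\cap\cV$. Then I would invoke Lemma \ref{invMorse} with $\cA=\bomega(\cU)\in\sAtt(\cF)$ and $\cR=\balpha(\cV)\in\sRep(\cF)$ to conclude that $\bomega(\cU)\cap\balpha(\cV)$ is invariant. Being an invariant subset of $\cU\cap\cV$, it is contained in the maximal invariant set $\Inv(\cU\cap\cV)$.

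For the reverse inclusion $\Inv(\cU\cap\cV)\subseteq \bomega(\cU)\cap\balpha(\cV)$, I would use that $\Inv(\cU\cap\cV)$ is, by definition, an invariant set contained in $\cU\cap\cV$, hence contained in $\cU$ and in $\cV$ separately. Since $\bomega(\cU)$ is the largest invariant subset of the forward invariant set $\cU$, we get $\Inv(\cU\cap\cV)\subseteq\bomega(\cU)$; similarly $\Inv(\cU\cap\cV)\subseteq\balpha(\cV)$, and the two inclusions combine to give the claim.

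The main obstacle is the step that identifies $\bomega(\cU)$ (respectively $\balpha(\cV)$) with the maximal invariant set $\Inv(\cU)$ (respectively $\Inv(\cV)$) for $\cU\in\IS^+(\cF)$ (respectively $\cV\in\IS^-(\cF)$). This should follow from the discussion in Appendix \ref{sec:combdyn} where the surjective maps $\bomega\colon\IS^+(\cF)\twoheadrightarrow\sAtt(\cF)$ and $\balpha\colon\IS^-(\cF)\twoheadrightarrow\sRep(\cF)$ are defined; if this identification is not already explicit, I would include a brief argument using the characterization $\Inv(\cY)=\cY\cap\cF(\cY)\cap\cF^{-1}(\cY)\cdots$ together with the definitions of forward/backward invariance to show $\bomega(\cU)=\bigcap_{k\ge 0}\cF^k(\cU)=\Inv(\cU)$ and dually $\balpha(\cV)=\bigcap_{k\ge 0}\cF^{-k}(\cV)=\Inv(\cV)$.
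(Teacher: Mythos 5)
Your overall structure is the same as the paper's: one inclusion via Lemma \ref{invMorse} applied to the attractor--repeller pair $\bigl(\bomega(\cU),\balpha(\cV)\bigr)$, and the other via maximality of $\Inv(\cU\cap\cV)$ among invariant subsets of $\cU\cap\cV$. The first inclusion and its justification are fine.

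There is, however, a genuine error in the step you yourself flag as ``the main obstacle.'' The identification $\bomega(\cU)=\Inv(\cU)$ for $\cU\in\IS^+(\cF)$ is \emph{false} in the combinatorial setting. For forward invariant $\cU$ one does have $\bomega(\cU)=\bigcap_{k\ge 0}\cF^k(\cU)$, and this stabilizes to an attractor $\cA$ with $\cF(\cA)=\cA$; but, as the appendix explicitly notes, attractors need not be invariant sets, since $\cF(\cA)=\cA$ does not give $\cA\subset\cF^{-1}(\cA)$. For example, on $\cX=\{1,2\}$ with $\cF=\{(1,1),(1,2)\}$ one has $\bomega(\cX)=\cX$ while $\Inv(\cX)=\{1\}$. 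So the ``brief argument'' you propose would not close. Fortunately your proof only needs the one-sided containment: every invariant $\cS\subset\cU$ satisfies $\cS\subset\cF^n(\cS)\subset\cF^n(\cU)$ for all $n$, hence $\cS\subset\bomega(\cU)$, i.e.\ $\Inv(\cU)\subset\bomega(\cU)\subset\cU$ --- which is exactly the inequality the paper invokes, and is not an equality. Replace your appeal to ``$\bomega(\cU)$ is the largest invariant subset of $\cU$'' by this containment (and dually for $\balpha(\cV)$), and the argument is correct and coincides with the paper's.
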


\begin{proof}
Since  $\bomega(\cU) \cap \balpha(\cV)\subset \cU\cap \cV$, Lemma \ref{invMorse} implies that 
$ \bomega(\cU) \cap \balpha(\cV)\subset\Inv(\cU\cap \cV)$.
Let $\cS\subset \cU\cap \cV$ be an invariant set. 
Since $\Inv(\cU)\subset\bomega(\cU)\subset \cU$ when $\cU$ is forward invariant,  $\cS\subset\Inv(\cU)\subset\bomega(\cU)$.
 Similarly, $\cS \subset \balpha(\cV)$ and therefore $\cS \subset \bomega(\cU) \cap \balpha(\cV)$.
\end{proof}

\begin{theorem}
\label{CFforref}
The lattice form
\begin{equation}
    \label{cfonrel}
   \CF_\sAtt(\cA,\cA') :=\cA\cap \cA'^*
\end{equation} 
is a representation of the Conley form in $\IS(\cF)$.
\end{theorem}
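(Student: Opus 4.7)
The plan is to apply Theorem \ref{mainthm12}: once I verify that $\CF_\sAtt\colon\sAtt(\cF)\times\sAtt(\cF)\to\IS(\cF)$ is a monotone, distributive lattice form, Theorem \ref{mainthm12} together with Corollary \ref{charequivCF} will identify it as a representation of the Conley form on $\sAtt(\cF)$ in $\IS(\cF)$. Well-definedness of the codomain is immediate from Lemma \ref{invMorse}: since $\cA'^*\in\sRep(\cF)$ and $\cA\in\sAtt(\cF)$, the intersection $\cA\cap\cA'^*$ lies in $\IS(\cF)$.

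For the three defining properties I would rely on the following inputs from the earlier papers of this series: (a) $\sAtt(\cF)$ and $\sRep(\cF)$ sit inside $\sInvset(\cF)$ as sublattices with set-theoretic meets and joins, so $\cA\vee\cA' = \cA\cup\cA'$ and $\cA\wedge\cA' = \cA\cap\cA'$; (b) the dual $^*\colon\sAtt(\cF)\to\sRep(\cF)$ is a lattice anti-isomorphism with $\cA^* = \balpha(\cA^c)$, yielding $(\cA\vee\cA')^* = \cA^*\cap\cA'^*$ and $(\cA\wedge\cA')^* = \cA^*\cup\cA'^*$; and (c) an attractor is disjoint from its dual repeller, $\cA\cap\cA^* = \varnothing$. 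Granted (a)--(c), distributivity and absorption reduce to routine set-theoretic calculations,
\begin{align*}
\CF_\sAtt(\cA\wedge\cC,\cA'\vee\cD) &= (\cA\cap\cC)\cap(\cA'^*\cap\cD^*) = \CF_\sAtt(\cA,\cA')\wedge\CF_\sAtt(\cC,\cD),\\
\CF_\sAtt(\cA\vee\cA',\cA) &= (\cA\cup\cA')\cap\cA^* = (\cA\cap\cA^*)\cup(\cA'\cap\cA^*) = \cA'\cap\cA^*,
\end{align*}
and the dual absorption identity $\CF_\sAtt(\cA,\cA\wedge\cA') = \CF_\sAtt(\cA,\cA')$ follows by the symmetric computation using (b) and (c).

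For monotonicity, assume $\CF_\sAtt(\cA,\cA') = \cA\cap\cA'^* = \varnothing$ but $\cA\not\subseteq\cA'$, and pick $\xi\in\cA\setminus\cA'$. Invariance of $\cA$ lets me build a complete backward orbit of $\xi$ inside $\cA$; backward invariance of $\cA'^c$ (the set complement of the forward invariant $\cA'$) keeps this orbit in $\cA'^c$. Hence the non-empty alpha-limit of the orbit sits inside $\cA\cap\balpha(\cA'^c) = \cA\cap\cA'^*$, contradicting the hypothesis.

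The main obstacle is monotonicity: it is the only step that uses genuine dynamical information rather than lattice algebra, and it depends on the characterization $\cA'^* = \balpha(\cA'^c)$ together with the existence of backward orbits inside the invariant set $\cA$. Distributivity and absorption reduce to essentially algebraic manipulations once the anti-isomorphism of (b) and the disjointness in (c) are in hand.
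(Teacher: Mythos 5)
Your overall strategy (verify absorption, distributivity, monotonicity and invoke Theorem~\ref{mainthm12}) is the paper's strategy, and your absorption computation and your well-definedness remark via Lemma~\ref{invMorse} match the paper. However, your premise (a) contains a genuine error that breaks the distributivity step. For a binary relation $\cF$ the meet in $\sAtt(\cF)$ is \emph{not} set intersection: the intersection of two attractors need only be forward invariant, and the meet is $\cA\wedge\cC=\bomega(\cA\cap\cC)$ (dually, the meet in $\sRep(\cF)$ is $\balpha$ of the intersection, and the meet in the codomain $\IS(\cF)$ is $\Inv$ of the intersection, since $\IS(\cF)$ is not a sublattice of $\sSet(\cX)$). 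Consequently your identity
\[
\CF_\sAtt(\cA\wedge\cC,\cA'\vee\cD)=(\cA\cap\cC)\cap(\cA'^*\cap\cD^*)=\CF_\sAtt(\cA,\cA')\wedge\CF_\sAtt(\cC,\cD)
\]
silently replaces three different meets by plain intersection, and none of these replacements is justified. The paper closes exactly this gap with Lemma~\ref{charitoinv12}: for $\cU\in\IS^+(\cF)$ and $\cV\in\IS^-(\cF)$ one has $\Inv(\cU\cap\cV)=\bomega(\cU)\cap\balpha(\cV)$, which is applied with $\cU=\cA\cap\cC$ and $\cV=\cB^*\cap\cD^*$ to convert the $\IS(\cF)$-meet $\Inv\bigl((\cA\cap\cB^*)\cap(\cC\cap\cD^*)\bigr)$ into $(\cA\wedge\cC)\cap(\cB\vee\cD)^*$. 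You need this lemma (or an equivalent substitute); without it the distributivity claim is unproven. (Your remark that distributivity is ``routine lattice algebra'' is precisely where the dynamical content hides in the combinatorial setting.)

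Your monotonicity argument, by contrast, is correct but genuinely different from the paper's. You build a complete backward orbit of $\xi\in\cA\smin\cA'$ inside $\cA\cap\cA'^c$ and use finiteness of $\cX$ to place its (nonempty) set of infinitely-recurring states inside $\cA\cap\balpha(\cA'^c)=\cA\cap\cA'^*$; the paper instead argues algebraically that $\cA\cap\cA'^*=\varnothing$ forces $\cA\subset(\cA'^*)^c$ and then applies $\bomega$ to get $\cA=\bomega(\cA)\subset\bomega\bigl((\cA'^*)^c\bigr)=\cA'$. The paper's route is shorter and reuses the duality $\bomega\dashv\balpha$ machinery; yours is more hands-on but sound, provided you justify that the recurrent part of the backward orbit lies in $\balpha(\cA'^c)$, which follows from $\xi_{-m}\in\cF^{-m}(\cA'^c)$ for all $m\ge 0$.
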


\begin{proof} 
For attractors $\cA, \cA'$ there exist $\cU,\cU'\in \IS^+(\cF)$ such that $\cA = \bomega(\cU)\subset \cU$ and $\cA' = \bomega(\cU')\subset \cU'$.
For the dual repellers $\cA^*,\cA'^*$ we have $\cA^* = \balpha(\cU^c)\subset \cU^c$ and $\cA'* = \balpha(\cU'^c)\subset \cU'^c$. In particular we can choose $\cU=\cA$ and $\cU' = \cA'$.
Observe that $\cA\cap \cA^* = \varnothing$. Indeed,
\[
\cA\cap\cA^* = \bomega(\cU) \cap \balpha(\cU^c) \subset \cU\cap \cU^c=\varnothing.
\]
Let $\cA,\cB,\cC,\cD\in\sAtt(\cF)$. Absorption is established by
\[
\begin{aligned}
\CF_\sAtt(\cA\vee \cB,\cA) & = (\cA\cup \cB) \cap A^* 
= (\cA\cap \cA^*) \cup (\cB\cap A^*) 
=\CF_\sAtt(\cB,\cA),
\end{aligned}
\]
and similarly 
\[
\begin{aligned}
\CF_\sAtt(\cA,\cA\wedge \cB) &= \cA\cap (\cA\wedge \cB)^* 
= \cA\cap (\cA^*\cup \cB^*) 
= (\cA\cap \cA^*) \cup (\cA\cap \cB^*) 
= \CF_\sAtt(\cA,\cB).
\end{aligned}
\]
Since $\cA\cap \cC\in \IS^+(\cF)$ and $\cB^*\cap \cD^*\in \IS^-(\cF)$ we have, using Lemma \ref{charitoinv12},
\[
\begin{aligned}
\CF_\sAtt(\cA,\cB) \wedge \CF_\sAtt(\cC,\cD) &= 
\Inv\bigl((\cA\cap \cB^*) \cap (\cC\cap \cD^*)\bigr)= \Inv\bigl((\cA\cap \cC) \cap (\cB^*\cap \cD^*) \bigr)\\
&= \bomega(\cA\cap\cC) \cap \balpha(\cB^*\cap \cD^*) = (\cA\wedge\cC)
\cap (\cB^*\wedge \cD^*)\\
&= (\cA\wedge\cC)
\cap (\cB\cup \cD)^*
= \CF_\sAtt(\cA\wedge \cC, \cB\cup\cD),
\end{aligned}
\]
which proves distributivity.
It remains to show that the lattice form is monotone.
Assume $\cA,\cA'\in\sAtt(\cF)$ satisfy $\CF_\sAtt(\cA,\cA') = \cA\cap \cA'^* = \varnothing$.
Observe that 
$\cA'^*\in \IS^-(\cF)$ and $\cU':=(\cA'^*)^c\in \IS^+(\cF)$. Since $\balpha(\cU'^c) = \cA'^*$
we have that $\bomega(\cU')= \cA'$.
Then,
\[
\varnothing = \cA\cap \cA'^* = \cA \smin (\cA'^*)^c,
\]
which implies that $\cA\subset (\cA'^*)^c$
and therefore $\cA = \bomega(\cA) \subset \bomega((\cA'^*)^c)= \cA'$
which establishes monotonicity and
 completes the proof.
\end{proof}

\section{Maps between Conley forms}
\label{mapslf}
We now discuss the effect of a lattice homomorphism on lattice forms and the Conley form in particular.
Theorems~\ref{mainthm12} and~\ref{mapCF} (below) imply that the Conley form behaves as a Boolean homomorphism under a homomorphism between lattices.
This confirms  that the Conley form is a generalization of the set difference operator for bounded, distributive lattices. 

\begin{theorem}
\label{mapCF}
Let $\sL$ and $\sK$ be bounded, distributive lattices and let
 $h\colon \sK\to \sL$ be a lattice homomorphism. 
For every representation of Conley forms on $\sL$ and $\sK$ if $a-b=a'-b'$, then
\[ 
h(a) - h(b) = h(a') -h(b').
\]
\end{theorem}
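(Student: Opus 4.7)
The plan is to reduce everything to the canonical Conley form and then exploit the functoriality of Booleanization. By Theorem~\ref{mainthm12} and Corollary~\ref{charequivCF}, any representation of the Conley form on $\sL$ (resp.\ $\sK$) is of the form $\CF = \gamma \circ \CCF$ (resp.\ $\CF' = \gamma' \circ \CCF$) for an injective meet semilattice homomorphism $\gamma$ (resp.\ $\gamma'$). So the assertion that $a-b = a'-b'$ in any representation is equivalent, via the injectivity of $\gamma$, to the statement $\CCF(a,b) = \CCF(a',b')$ in $\sB^\updownarrow(\sK)$, i.e.\ $j(a) \smin j(b) = j(a') \smin j(b')$ in $\sB(\sK)$. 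Hence it suffices to prove the implication at the level of canonical forms.

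Next, I would invoke the functoriality captured by diagram~\eqref{commBool}: the lattice homomorphism $h\colon\sK\to\sL$ induces a Boolean homomorphism $\sB(h)\colon\sB(\sK)\to\sB(\sL)$ with $\sB(h)\circ j = j \circ h$. Because $\sB(h)$ is a Boolean homomorphism, it commutes with set difference: for any $A,B\in\sB(\sK)$,
\[
\sB(h)(A\smin B) = \sB(h)(A) \smin \sB(h)(B).
\]
Applying $\sB(h)$ to the equality $j(a)\smin j(b) = j(a')\smin j(b')$ and using $\sB(h)\circ j = j\circ h$, I obtain
\[
j(h(a))\smin j(h(b)) = j(h(a'))\smin j(h(b')),
\]
which is precisely $\CCF(h(a),h(b)) = \CCF(h(a'),h(b'))$ in $\sB^\updownarrow(\sL)$.

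Finally, I transfer back to the chosen representation on $\sL$: applying the injective semilattice homomorphism $\gamma'$ to both sides yields $h(a)-h(b) = h(a')-h(b')$. The step I expect to require the most care is the bookkeeping between the different representations, namely verifying that the reduction to $\CCF$ is legitimate in both the hypothesis and the conclusion; once that is in place, the substantive content is just that Booleanization is a functor into Boolean algebras, so $\sB(h)$ automatically preserves $\smin$. No finiteness or compactness is needed here beyond what is already built into Proposition~\ref{prop:booleanize2}.
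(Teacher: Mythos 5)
Your argument is correct and is essentially the paper's own proof: both reduce the equality in an arbitrary representation to the canonical form via injectivity of $\gamma$, then apply diagram~\eqref{commBool} and the fact that the Boolean homomorphism $\sB(h)$ preserves set difference. No gaps.
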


\begin{proof}
We use Diagram \eqref{commBool} for the Booleanization of $h$.
By construction of the Conley form on $\sK$ we have $a-b=a'-b'$ if and only if $A\smin B = A'\smin B'$.
Similarly,  for the Conley form on $\sL$ we have $h(a)-h(b)=h(a') -h(b')$ if and only if 
$j(h(a))\smin j(h(b))=j(h(a')) \smin j(h(b'))$. By Diagram \eqref{commBool}  the latter is equivalent to
\[
\sB(h)(A) \smin \sB(h)(B) = \sB(h)(A') \smin \sB(h)(B').
\]
Since $\sB(h)$ is Boolean it holds that $\sB(h)(A)\smin \sB(h)(B) = \sB(h)(A\smin B)$ which completes the proof.
\end{proof}

\begin{remark} The canonical Conley forms on $\sK$ and $\sL$ are
represented in Boolean algebras $\sB(\sK)$ and $\sB(\sL)$. The key 
idea in the above proof can be expressed as the fact that the Boolean
map $\sB(h)$ commutes with the canonical Conley forms, i.e.
\[
\CCF\circ (\sB(h)\times\sB(h))=\sB(h)\circ\CCF.
\]
\end{remark}

\begin{corollary}
\label{cor:existenceTheta}
Under the hypotheses of Theorem~\ref{mapCF}, $h$ induces a map $\theta\colon \sI_{\CF} \to \sJ_{\CF}$ given by
\[
\theta(a-b) := h(a) - h(b),
\]
and $\theta$ is a meet semilattice homomorphism preserving both neutral elements as expressed in the commutative diagram
\begin{equation}
\begin{diagram} 
\label{diag:AR60}
\node{\sK\times\sK} \arrow{e,l,A}{} \arrow{s,l}{h\times h}\node{\sI_{\CF}}\arrow{se,l}{\theta} \arrow{s,r}{\theta}\arrow{e,l,V}{ }\node{\sI}\\
\node{\sL\times \sL} \arrow{e,l,A}{ }  \node{\sJ_{\CF}}\arrow{e,l,V}{ }\node{\sJ} 
\end{diagram}
\end{equation}
\end{corollary}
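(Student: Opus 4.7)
The plan is to verify the four assertions embedded in the statement in the order: well-definedness of $\theta$, the meet semilattice homomorphism property, preservation of neutral elements, and commutativity of the diagram. The hard work has already been done in Theorem~\ref{mapCF}; what remains is mostly a bookkeeping exercise using the defining properties of the Conley form together with the fact that $h$ is a bounded lattice homomorphism (so $h(0)=0$ and $h(1)=1$).

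First I would address well-definedness. An arbitrary element of $\sI_\CF$ is of the form $a-b$ with $a,b\in\sK$, but the same element can arise from many pairs. The assignment $\theta(a-b):=h(a)-h(b)$ is therefore only legitimate if the output depends only on $a-b$, not on the chosen representatives. This is precisely the content of Theorem~\ref{mapCF}: whenever $a-b=a'-b'$ in $\sI_\CF$, one has $h(a)-h(b)=h(a')-h(b')$ in $\sJ_\CF$. Hence $\theta\colon\sI_\CF\to\sJ_\CF$ is a well-defined function.

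Next I would verify the meet semilattice homomorphism property. For $a,b,c,d\in \sK$, using the distributivity property of the Conley form on $\sK$ and on $\sL$ together with the fact that $h$ preserves joins and meets, one computes
\[
\theta\bigl((a-b)\wedge(c-d)\bigr)=\theta\bigl((a\wedge c)-(b\vee d)\bigr)=h(a\wedge c)-h(b\vee d),
\]
which equals $\bigl(h(a)\wedge h(c)\bigr)-\bigl(h(b)\vee h(d)\bigr)=\bigl(h(a)-h(b)\bigr)\wedge\bigl(h(c)-h(d)\bigr)=\theta(a-b)\wedge\theta(c-d)$. For the neutral elements I would use Proposition~\ref{lem:extraprop12}(ii)--(iii): since $h(0)=0$ and $h(1)=1$,
\[
\theta(0)=\theta(0-1)=h(0)-h(1)=0-1=0,\qquad \theta(1)=\theta(1-0)=h(1)-h(0)=1-0=1,
\]
so $\theta$ sends $0\mapsto 0$ and $1\mapsto 1$.

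Finally I would check that the diagram in \eqref{diag:AR60} commutes. The left square commutes by construction: going right then down sends $(a,b)\in\sK\times\sK$ first to $a-b\in\sI_\CF$ and then to $h(a)-h(b)$; going down then right sends $(a,b)$ first to $(h(a),h(b))$ and then to $h(a)-h(b)$. The right-hand triangles involving the inclusions $\sI_\CF\hookrightarrow\sI$ and $\sJ_\CF\hookrightarrow\sJ$ commute trivially since $\theta$ is defined on $\sI_\CF$ and its image lies in $\sJ_\CF$. The main (and only) subtle step is the well-definedness of $\theta$, and that is handed to us by Theorem~\ref{mapCF}; the remaining verifications are direct consequences of distributivity, absorption, and the fact that $h$ is a morphism in $\sBDLat$.
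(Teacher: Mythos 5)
Your proof is correct and follows essentially the same route as the paper: well-definedness is delegated to Theorem~\ref{mapCF}, the homomorphism property is the same distributivity computation, and the neutral elements are handled via $h(0)=0$, $h(1)=1$. The extra explicit check of the diagram's commutativity is harmless and just makes precise what the paper leaves implicit.
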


\begin{proof}
For the homomorphism property we argue as follows. 
Using the distributivity of  the Conley forms on  $\sK$ and $\sL$ we have
\[
\begin{aligned}
\theta\bigl((a-b)\wedge (c-d)\bigr) &=\theta\bigl((a\wedge c) - (b\vee d)\bigr)
= h(a\wedge c)- h(b\vee d)\\
&=\bigl( h(a)\wedge h(c)\bigr) - \bigl(h(b)\vee h(d)\bigr)
=\bigl( h(a)-h(b)\bigr) \wedge \bigl(h(c)-h(d)\bigr)\\
&=\theta(a-b)\wedge \theta(c-d).
\end{aligned}
\]

For the neutral elements we have
\[
\theta(0-1) = h(0) - h(1) = 0-1,
\]
and similarly $\theta(1-0) = 1-0$ which shows that $\theta$ preserves the neutral elements in $\sI_{\CF}$ and
$\sJ_{\CF}$.
\end{proof}

\begin{remark}
\label{extendrho}
In Theorem \ref{mapCF}  
we can relax the Conley form on $\sL$ by a lattice form $\rho$ and 
the map $\theta$ is still well-defined
since only the absorption property is.
As for Corollary \ref{cor:existenceTheta} we still obtain a semilattice homomorphism $\theta$
if the Conley form on $\sL$ is relaxed to a distributive lattice form. 
\end{remark}

By Corollary \ref{cor:existenceTheta} and Remark \ref{extendrho} we can define the pullback of a lattice form. 
Let $h\colon \sK\to \sL$ be a lattice homomorphism and $\rho$ be a lattice form on $\sL$. 
Then,
\[
(h^\bullet\rho)(a,b) := \rho\bigl(h(a), h(b)\bigr),
\]
defines a lattice form on $\sK$.

\begin{corollary}
\label{pullback}
Let $h\colon \sK\to \sL$ be a lattice isomorphism, and let 
$\CF$ be a representation of the Conley form on $\sL$ in $\sI$, then $h^{\bullet}\CF$ is a representation of the Conley form on $\sK$ in $\sI$.
\end{corollary}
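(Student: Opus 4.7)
The strategy is to reduce the statement to Theorem~\ref{mainthm12}, which characterizes representations of the Conley form on $\sK$ in $\sI$ as exactly those maps $\sK\times\sK\to\sI$ that are monotone, distributive lattice forms. So the entire proof becomes a routine verification that $h^\bullet\CF$ satisfies absorption, distributivity, and monotonicity, followed by one invocation of Theorem~\ref{mainthm12} to produce the requisite injective meet semilattice homomorphism $\gamma'\colon\sB^\updownarrow(\sK)\to\sI$.

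For absorption and distributivity, I would simply push the lattice operations through $h$, using that $h$ is a bounded lattice homomorphism (so it preserves finite meets, finite joins, and the neutral elements) together with the corresponding properties of $\CF$ on $\sL$. Explicitly, $(h^\bullet\CF)(a\vee b,a)=\CF\bigl(h(a)\vee h(b),h(a)\bigr)=\CF\bigl(h(b),h(a)\bigr)=(h^\bullet\CF)(b,a)$ by absorption of $\CF$, with the dual identity $(h^\bullet\CF)(a,a\wedge b)=(h^\bullet\CF)(a,b)$ established identically; distributivity follows by the same token from $(h^\bullet\CF)(a\wedge c,b\vee d)=\CF\bigl(h(a)\wedge h(c),h(b)\vee h(d)\bigr)=\CF\bigl(h(a),h(b)\bigr)\wedge\CF\bigl(h(c),h(d)\bigr)$.

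The only step that uses more than the homomorphism property of $h$ is monotonicity. Suppose $(h^\bullet\CF)(a,b)=(h^\bullet\CF)(0,1)$. Since $h(0)=0$ and $h(1)=1$, this reads $\CF\bigl(h(a),h(b)\bigr)=\CF(0,1)$, so monotonicity of $\CF$ on $\sL$ yields $h(a)\le h(b)$. Here I use that $h$ is an isomorphism and therefore order-reflecting (in fact injectivity alone suffices, via $h(a)\wedge h(b)=h(a)\Rightarrow h(a\wedge b)=h(a)\Rightarrow a\wedge b=a$), to conclude $a\le b$.

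With absorption, distributivity, and monotonicity in hand, Theorem~\ref{mainthm12} immediately yields a unique injective meet semilattice homomorphism $\gamma'\colon\sB^\updownarrow(\sK)\to\sI$, determined by $\gamma'(A\smin B)=(h^\bullet\CF)(a,b)$, such that $h^\bullet\CF=\gamma'\circ\CCF_\sK$, exhibiting $h^\bullet\CF$ as a representation of the Conley form on $\sK$ in $\sI$ in the sense of Definition~\ref{CF}. I do not foresee a genuine obstacle here: the result is essentially bookkeeping, and the mildest point requiring care is the use of order-reflection for monotonicity, which is precisely where the isomorphism hypothesis enters.
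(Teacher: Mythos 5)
Your proposal is correct and follows essentially the same route as the paper: verify that $h^\bullet\CF$ is a monotone, distributive lattice form by pushing the lattice operations through $h$, use the isomorphism hypothesis only for the order-reflection step in monotonicity ($h(a)\le h(b)\Rightarrow a\le b$), and then invoke Theorem~\ref{mainthm12}. The paper merely defers the absorption/distributivity verification to the computation already done in Corollary~\ref{cor:existenceTheta}, whereas you write it out explicitly; your side remark that injectivity of $h$ already suffices for order-reflection is also consistent with the paper's subsequent remark on the sublattice case.
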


\begin{proof}
Distributivity follows from the proof of Corollary \ref{cor:existenceTheta}.
By definition $(h^\bullet\CF)(a,b) = h(a)-h(b)$.
By Theorem~\ref{mainthm12} to check that $h^\bullet\CF$ is a representation of the Conley form on $\sK$ we need to show  monotonicity. 
Consider $(h^\bullet)(a,b) = (h^\bullet)(0,1)$ which is equivalent to $h(a)-h(b) = h(0) - h(1) = 0-1$ in $\sI$. 
This implies that $h(a) \le h(b)$ and thus $a\le b$ since $h$ is an isomorphism.
\end{proof}

\begin{remark}
If $\sK\subset \sL$, then $h$ may be regarded as a lattice embedding in which case $h^\bullet\CF$ is the restriction of $\CF$ to $\sK$, cf.\ Proposition~\ref{prop:restrictionMP}.
\end{remark}

When $h\colon \sK\to\sL$ is an anti-homomorphism,
we define a pullback of a lattice form by
\begin{equation}\label{dualpull}
(h^\bullet\rho)(a,b):=\rho(h(b),h(a)),
\end{equation}
which is justified by the following proposition.

\begin{proposition}\label{prop:anti}
Let $h\colon \sK\to \sL$ be a lattice anti-isomorphism
 and let 
 $\CF$ be a representation of the Conley form on $\sL$ in $\sI$, then $h^{\bullet}\CF$ 
 is a representation of
the Conley form on $\sK$ in $\sI$.
\end{proposition}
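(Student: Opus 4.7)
The plan is to invoke Theorem \ref{mainthm12}, which characterizes representations of the Conley form as precisely those functions $\sK\times\sK\to\sI$ that are lattice forms (absorption), distributive, and monotone. So I need to verify these three properties for $h^\bullet\CF$ as defined by \eqref{dualpull}. The entire calculation is driven by the defining features of a lattice anti-isomorphism: $h(a\vee b)=h(a)\wedge h(b)$, $h(a\wedge b)=h(a)\vee h(b)$, $h(0)=1$, $h(1)=0$, and $a\le b \Leftrightarrow h(b)\le h(a)$. The reason the pullback is defined with the \emph{arguments swapped} in \eqref{dualpull} is precisely to compensate for this order reversal; this swap is what will allow each required property of $h^\bullet\CF$ to reduce cleanly to the corresponding property of $\CF$.

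For absorption, I compute $(h^\bullet\CF)(a\vee b, a) = \CF(h(a), h(a\vee b)) = \CF(h(a), h(a)\wedge h(b))$, which by absorption for $\CF$ equals $\CF(h(a), h(b)) = (h^\bullet\CF)(b,a)$. The second absorption identity is handled symmetrically, replacing meets by joins in the intermediate step. For distributivity, I compute
\[
(h^\bullet\CF)(a\wedge c, b\vee d) = \CF(h(b)\wedge h(d), h(a)\vee h(c)),
\]
and applying distributivity of $\CF$ on $\sL$ yields $\CF(h(b),h(a))\wedge\CF(h(d),h(c)) = (h^\bullet\CF)(a,b)\wedge(h^\bullet\CF)(c,d)$.

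For monotonicity, suppose $(h^\bullet\CF)(a,b) = (h^\bullet\CF)(0,1)$. Unfolding the definition gives $\CF(h(b),h(a)) = \CF(h(1),h(0)) = \CF(0,1)$, so by monotonicity of $\CF$ we have $h(b)\le h(a)$, and since $h$ is an anti-isomorphism this is equivalent to $a\le b$.

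There is no real obstacle here; the proof amounts to careful bookkeeping, and the only substantive point is recognizing that the argument swap in \eqref{dualpull} is exactly what makes each of the three defining properties transfer from $\CF$ to $h^\bullet\CF$ under an order-reversing bijection. Once the three properties are checked, Theorem \ref{mainthm12} delivers the conclusion that $h^\bullet\CF$ is a representation of the Conley form on $\sK$ in $\sI$.
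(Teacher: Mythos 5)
Your proposal is correct and follows essentially the same route as the paper: verify absorption, distributivity, and monotonicity for $h^\bullet\CF$ by unfolding the swapped-argument definition \eqref{dualpull} and using that $h$ exchanges meets and joins and reverses order, then conclude via the characterization in Theorem~\ref{mainthm12}. The computations match the paper's line for line, so there is nothing to add.
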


\begin{proof}
To show that $h^\bullet\CF$ is a Conley form we verify absorption, distributivity, and monotonicity.
Consider
\[
\begin{aligned}
(h^\bullet\CF)(a\vee b,a)&=\CF(h(a),h(a\vee b))
=\CF(h(a),h(a)\wedge h(b))\\
&=\CF(h(a),h(b))
=(h^\bullet\CF)(b,a),
\end{aligned}
\]
and
\[
\begin{aligned}
(h^\bullet\CF)(a,a\wedge b)&=\CF(h(a\wedge b),h(a))
=\CF(h(a)\vee h(b),h(a))\\
&=\CF(h(b),h(a))
=(h^\bullet\CF)(a,b)
\end{aligned}
\]
which establishes absorption and
\[
\begin{aligned}
(h^\bullet\CF)(a\wedge c,b\vee d)&=\CF(h(b\vee d),h(a\wedge c))
=\CF(h(b)\wedge h(d),h(a)\vee h(c))\\
&=\CF(h(b),h(a))\wedge\CF(h(d),h(c))
=(h^\bullet\CF)(a,b)\wedge(h^\bullet\CF)(c,d)
\end{aligned}
\]
establishes distributivity. As for monotonicity we argue as follows.
Suppose $(h^\bullet\CF)(a,b)=h^\bullet\CF(0,1)$,
then $h(b)-h(a) = h(1)-h(0) = 0-1$.
Therefore, $h(b)\le h(a)$ which implies $a\le b$
since lattice anti-isomorphisms are order-reversing.
\end{proof}

\begin{example}
\label{ex:X4b}
Let $\varphi\colon\T^+\times X\to X$ be a dynamical system that is not necessarily   invertible.
The arguments in Example~\ref{ex:X4} make use of the fact that $\IS(\varphi)$ is a subalgebra of $\sSet(X)$.
For noninvertible dynamical systems, the meet lattice operation is not intersection, and hence  $\IS(\varphi)$ is not generally a sublattice of $\sSet(X)$.
Therefore, we need an alternative representation of a Conley form.
By Lemma~\ref{injstunst}
\[
\begin{aligned}
W^s\colon \sMorse(\varphi) &\to \IS^\pm(\varphi) \\
S & \mapsto W^s(S):= \setof{x\in X\mid \omega(x) \subset S}
\end{aligned}
\]
is an injective semilattice homomorphism.
Since $\sInvset^\pm(\varphi)$ is a Boolean algebra, following the same arguments as in Example~\ref{ex:X4}, using $W^s$ and Lemma \ref{lem:stlatthom} instead,  we obtain a representation of Conley form on $\sRep(\varphi)$ represented in $\IS(\varphi)$ as
\[
\CF_\sRep(R,R')=R-R' := R\cap R'^*
\]
with range $\sMorse(\varphi)=\CF_\sRep(\sRep(\varphi)\times\sRep(\varphi))$.

Since the dual operator $^*\colon\sAtt(\varphi)\to\sRep(\varphi)$  is an anti-isomorphism, c.f. \cite[Proposition 4.7]{KMV-1a}, Proposition~\ref{prop:anti} and Equation~\eqref{dualpull} imply that the pullback
\[
(h^\bullet \CF_\sRep) (A,A') = \CF_\sRep(A'^*,A^*) = A'^* \cap (A^*)^* = A\cap A'^*
\]
gives the following representation of the Conley form on $\sAtt(\varphi)$
in $\IS(\varphi)$ 
\[
\begin{aligned}
\CF_\sAtt\colon\sAtt(\varphi)\times\sAtt(\varphi)&\to\sMorse(\varphi)\\ 
(A,A') &\mapsto \CF_\sAtt(A,A') = A-A' :=A\cap A'^*,
\end{aligned}
\]
and $\sMorse(\varphi)=\CF_\sAtt(\sAtt(\varphi)\times\sAtt(\varphi))$.
\end{example}

\begin{remark}
In the remainder of the paper we will adopt
the notation $\CF_\sAtt(A,A')=A-A'$ and $\CF_\sRep(R,R')=R-R'$
indicated by the distinguished Conley forms $\CF_\sAtt$ and $\CF_\sRep$.
\end{remark}

\section{Conley forms and convexity semilattices for dynamical systems}
\label{sec:attmd}
We refine Corollary~\ref{cor:existenceTheta} in the context of various forms of dynamics.

\subsection{Combinatorial systems.}
\label{discreteCF}

Define the meet semilattice of \emph{Morse tiles}  to be
\[
\sMTile(\cF) := \CF^b(\sInvset^+(\cF)\times \sInvset^+(\cF))
\]
with $\CF^b(\cU,\cV) = \cU\smin \cV$. Then
Diagram~\eqref{diag:AR60} yields
\begin{equation}\label{diag:AR222c}
\begin{diagram}
\node{\sInvset^+(\cF)\times \sInvset^+(\cF)} \arrow{e,l,A}{\CF^b } \arrow{s,l,A}{  \omega\times \omega}\node{\sMTile(\cF)}\arrow{se,l}{\theta} \arrow{s,r,A}{ \theta}\arrow{e,l,V}{\subset}\node{\sSet(\cX)}\\
\node{\sAtt(\cF)\times \sAtt(\cF)} \arrow{e,l,A}{\CF}  \node{\sMorse(\cF)} \arrow{e,l,V}{\subset} \node{\sInvset(\cF)}.
\end{diagram}
\end{equation}
 The semilattice homomorphism $\theta\colon \sMTile(\cF) \twoheadrightarrow \sMorse(\cF)$  is defined by $\theta(\cU\smin \cU') = \bomega(\cU) - \bomega(\cU')=\cA-\cA'$ where $\cA=\bomega(\cU)$ and $\cA'=\bomega(\cU')$.
Since we have an explicit characterization of attractors via $\bomega$, we 
can  further characterize $\theta$. 
\begin{lemma}
\label{charitoinv}
 $\theta(\cU\smin\cU') = \Inv(\cU\smin \cU')$.
\end{lemma}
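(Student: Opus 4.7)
The plan is to reduce the identity to Lemma~\ref{charitoinv12} together with the commutativity of the duality diagram~(2) from the introduction. Specifically, I would first rewrite $\cU\smin\cU'$ as $\cU\cap\cU'^c$, where $\cU\in\IS^+(\cF)$ by hypothesis and $\cU'^c\in\IS^-(\cF)$ because $\cU'\in\IS^+(\cF)$. This puts us exactly in position to apply Lemma~\ref{charitoinv12}, giving
\[
\Inv(\cU\smin\cU') \;=\; \Inv(\cU\cap\cU'^c) \;=\; \bomega(\cU)\cap\balpha(\cU'^c).
\]

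Next I would identify $\balpha(\cU'^c)$ with the dual repeller $\bomega(\cU')^*$. This is precisely what the commuting square
\[
\begin{diagram}
\node{\sInvset^+(\cF)} \arrow{e,l,<>}{^c}\arrow{s,l,A}{\bomega} \node{\sInvset^-(\cF)} \arrow{s,r,A}{\balpha} \\
\node{\sAtt(\cF)}     \arrow{e,l,<>}{*} \node{\sRep(\cF)}
\end{diagram}
\]
from the introduction asserts: $\balpha\circ{}^c = {}^*\circ\bomega$, so that $\balpha(\cU'^c)=\bomega(\cU')^*=\cA'^*$. Substituting back gives $\Inv(\cU\smin\cU') = \cA\cap\cA'^*$, which by the definition of $\theta$ and of $\CF_\sAtt$ in~\eqref{cfonrel} is exactly $\theta(\cU\smin\cU')$.

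The argument is essentially a one-line computation once the two ingredients are in place, so there is no genuine obstacle; the only thing to verify carefully is that Lemma~\ref{charitoinv12} applies with the correct typing ($\cU$ forward invariant, $\cU'^c$ backward invariant), and that the commuting diagram indeed gives $\balpha(\cU'^c)=\bomega(\cU')^*$ for \emph{arbitrary} $\cU'\in\IS^+(\cF)$ rather than just for $\cU'=\cA'$. The latter is guaranteed by the surjectivity of $\bomega$ and $\balpha$ in the diagram, which identifies these two maps as a single well-defined composition of lattice morphisms, so the value only depends on the image $\cA'=\bomega(\cU')$.
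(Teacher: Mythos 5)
Your proof is correct and follows essentially the same route as the paper: the paper's proof is a one-line citation of Lemma~\ref{charitoinv12}, and your argument simply spells out the steps that citation compresses (writing $\cU\smin\cU'=\cU\cap\cU'^c$ with $\cU'^c\in\IS^-(\cF)$, applying Lemma~\ref{charitoinv12}, and identifying $\balpha(\cU'^c)=\bomega(\cU')^*=\cA'^*$ via the duality square). Your added care that the identification holds for arbitrary $\cU'\in\IS^+(\cF)$, not just attractors, is a worthwhile clarification but not a departure from the paper's argument.
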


\begin{proof}
By Lemma \ref{charitoinv12}, $\cA-\cA' = \Inv(\cU\smin \cU')$.
\end{proof}

Lemma \ref{charitoinv} in combination with Diagram~\eqref{diag:AR222c} gives the following commutative diagram
\begin{equation}
\begin{diagram}
\node{\sInvset^+(\cF)\times \sInvset^+(\cF)} \arrow{e,l,A}{\CF^b } \arrow{s,l,A}{  \omega\times \omega}\node{\sMTile(\cF)} \arrow{s,r,A}{ \Small \Inv}\arrow{e,l,V}{\subset}\node{\sSet(\cX)}\arrow{s,l,A}{\Small \Inv}\\
\node{\sAtt(\cF)\times \sAtt(\cF)} \arrow{e,l,A}{\CF}  \node{\sMorse(\cF)} \arrow{e,l,V}{\subset} \node{\sInvset(\cF)}
\end{diagram}
\end{equation}

\subsection{Dynamical systems}
\label{dynsys12}
Example~\ref{ex:X4b} establishes a nontrivial representation of the Conley form on $\sAtt(\varphi)$ into $\sMorse(\varphi)$. In this setting, Diagram~\eqref{diag:AR60} applied to the lattice of closed attracting blocks, $\sABlockC(\varphi)$, yields

\begin{equation}\label{diag:AlphaOmega}
\begin{diagram}
\node{\sABlockC(\varphi)\times \sABlockC(\varphi)} \arrow{e,l,A}{\CF^b } \arrow{s,l,A}{  \omega\times \omega}\node{\sMTile(\varphi)} \arrow{s,l,A}{ \theta}\arrow{se,l}{\theta}\arrow{e,l,V}{\subset}\node{\sSet(X)} \\
\node{\sAtt(\varphi)\times \sAtt(\varphi)} \arrow{e,l,A}{\CF_\sAtt}   \node{\sMorse(\varphi)} \arrow{e,l,V}{\subset} \node{\sInvset(\varphi)}
\end{diagram}
\end{equation}
where $\CF^b(U,U') = U\smin U'$ is a Conley form in $\sSet(X)$ by Rmk.\ \ref{rem:c0Boolean}. The the range is
\[
\sMTile(\varphi):= \CF^b \left(\sABlockC(\varphi)\times \sABlockC(\varphi) \right)
\] 
which is called the meet semilattice of \emph{Morse tiles}. 
Recall that a set $U\subset X$ is an \emph{isolating neighborhood} if
$\Inv(\cl U) \subset \Int U$ and the associated \emph{isolated invariant set} is $S=\Inv(\cl U)$. The set of isolating neighborhoods $\sINbhd(\varphi)$ is a subsemilattice of $\sSet(X)$ and the set of isolated invariant sets $\sIsol(\varphi)$ is a subsemilattice of 
$\IS(\varphi)$.

The meet semilattice homomorphism $\theta\colon \sMTile(\varphi) \twoheadrightarrow \sMorse(\varphi)$ can be explicitly characterized.

\begin{lemma}
\label{tiles}
$A- A' =\theta(U\smin U') = \Inv(U\smin U') = \Inv\bigl(\cl(U\smin U')\bigr) \subset \Int(U\smin U')$ for all $U,U'\in \sABlockC(\varphi)$ and
$\sMTile(\varphi) \subset \sINbhd(\varphi)$ is a subsemilattice. In particular, Morse sets are isolated invariant sets.
\end{lemma}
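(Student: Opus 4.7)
The plan is to prove the four assertions in order: (i) the chain of equalities, (ii) the containment in $\Int(U\smin U')$, (iii) the subsemilattice property, and (iv) the conclusion about Morse sets.

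For (i), I would first establish a continuous analog of Lemma \ref{charitoinv12}: if $U\in\sABlockC(\varphi)$ and $V\in\sRBlockO(\varphi)$ (so that $V=U'^c$ when $U'\in\sABlockC(\varphi)$), then $\Inv(U\cap V)=\omega(U)\cap\alpha(V)$. The inclusion $\omega(U)\cap\alpha(V)\subseteq\Inv(U\cap V)$ follows since both sides are invariant and $\omega(U)\cap\alpha(V)\subseteq U\cap V$. Conversely, any invariant $S\subseteq U\cap V$ lies in $\Inv(U)\subseteq\omega(U)$ by forward invariance of $U$, and symmetrically $S\subseteq\alpha(V)$. Specializing to $V=U'^c$ and invoking Example~\ref{ex:X4b} yields $\theta(U\smin U')=A-A'=A\cap A'^*=\omega(U)\cap\alpha(U'^c)=\Inv(U\cap U'^c)=\Inv(U\smin U')$.

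For the equality $\Inv(U\smin U')=\Inv(\cl(U\smin U'))$, the forward inclusion is immediate. For the converse, let $S=\Inv(\cl(U\smin U'))$. Since $U$ is closed and $U'$ is closed, $\cl(U\smin U')=\cl(U\cap U'^c)\subseteq U\cap\cl(U'^c)=U\smallsetminus\Int U'$. Invariance of $S$ with $S\subseteq U$ gives $S\subseteq\Inv(U)\subseteq\omega(U)=A\subseteq\Int U$, where the last inclusion uses the defining property of a closed attracting block. To obtain $S\cap U'=\varnothing$ (and simultaneously (ii)), suppose instead that $x\in S\cap U'$; since $S\cap\Int U'=\varnothing$, one has $x\in\partial U'$. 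The attracting block property of $U'$ forces $\varphi(t,x)\in\Int U'$ for all sufficiently large $t$, contradicting $\varphi(t,x)\in S\subseteq X\smallsetminus\Int U'$. Hence $S\subseteq\Int U\cap U'^c=\Int U\cap\Int(U'^c)\subseteq\Int(U\smin U')$, giving both $\Inv(\cl(U\smin U'))\subseteq\Inv(U\smin U')$ and the containment in $\Int(U\smin U')$, so $U\smin U'\in\sINbhd(\varphi)$. I anticipate that ruling out boundary points of $U'$ is the main obstacle; the argument is unavoidably analytic and depends on the precise definition of $\sABlockC(\varphi)$ recalled in App.~\ref{sec:dynamics}.

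For (iii), the meet operation inherited by $\sINbhd(\varphi)$ from $\sSet(X)$ is intersection, and the identity
\begin{equation*}
(U_1\smin U_1')\cap(U_2\smin U_2')=(U_1\cap U_2)\smin(U_1'\cup U_2')
\end{equation*}
together with the fact that $\sABlockC(\varphi)$ is closed under $\cap$ and $\cup$ shows that $\sMTile(\varphi)$ is closed under meets, hence is a subsemilattice of $\sINbhd(\varphi)$. Finally, since every Morse set has the form $A-A'=\Inv(\cl(U\smin U'))$ with $U\smin U'$ an isolating neighborhood, it is an isolated invariant set, which is the last claim of the lemma.
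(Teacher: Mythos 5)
Your proof is correct and follows essentially the same strategy as the paper's: both identify the maximal invariant set in the difference region with $A\cap A'^*$ by showing any invariant subset lies in both $A=\Inv(U)$ and $A'^*$, and then sandwich $\Inv\bigl(\cl(U\smin U')\bigr)$ between $A-A'$ and $\Int(U\smin U')$ using the containment $\cl(U\smin U')\subset U\smin\Int U'$. The only cosmetic difference is that the paper obtains $A-A'\subset\Int(U\smin U')$ directly from $A\subset\Int U$ and $A'^*\subset U'^c$, whereas you re-derive the exclusion of $\partial U'$ by a dynamical boundary-point argument; both are valid.
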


\begin{proof}
Let $S\subset U\smin \Int U' = U\cap \cl(U'^c)$ be an invariant set. Then $S\subset U$, and thus $A\cup S\subset U$. Since $A=\Inv(U)$ it follows that $S\subset A$. Similarly, $S\subset \cl(U'^c)$  and thus $A^*\cup S\subset \cl(U'^c)$. Since $A^* = \Inv^+\bigl(\cl(U'^c)\bigr)$ it follows that $S\subset A'^*$. Consequently, 
$A-A' =\Inv(U\smin\Int U')$.
Since $\cl(U\smin U') \subset U\cap \cl(U'^c) = U\smin \Int U'$ it follows that
\[
\Inv\bigl(\cl(U\smin U') \bigr)\subset \Inv(U\smin\Int U') =A-A' \subset \Int(U\cap U'^c)
= \Int(U\smin U'),
\]
which proves that 
$U\smin U'$ is an isolating neighborhood. Because
$A-A'\subset U\smin U'\subset \cl(U\smin U')$ it follows that $A-A'= \Inv(U\smin U') = \Inv\bigl(\cl(U\smin U')$. The fact that
$\sMTile(\varphi)$ is a subsemilattice of $\sSet(X)$ implies it is a subsemilattice of $\sINbhd(\varphi)$.
\end{proof}

Refining Diagram~\eqref{diag:AlphaOmega} based on Lemma~\ref{tiles} gives

\begin{equation}\label{diag:AlphaOmega2}
\begin{diagram}
\node{\sABlockC(\varphi)\times \sABlockC(\varphi)} \arrow{e,l,A}{\CF^b } \arrow{s,l,A}{  \omega\times \omega}\node{\sMTile(\varphi)} \arrow{s,l,A}{ \Small \Inv}\arrow{e,l,V}{\subset}\node{\sINbhd(\varphi)}\arrow{s,l,A}{ \Small \Inv} \\
\node{\sAtt(\varphi)\times \sAtt(\varphi)} \arrow{e,l,A}{ \CF_\sAtt}   \node{\sMorse(\varphi)} \arrow{e,l,V}{\subset} \node{\sIsol(\varphi).} 
\end{diagram}
\end{equation}
The fact that $\Inv\colon \sINbhd(\varphi) \twoheadrightarrow \sIsol(\varphi)$ is
a semilattice homomorphism follows from \cite[Lemma 2.7]{KMV-1a}.

\begin{remark}
In the above commutative diagram we could also have chosen to use the lattice of attracting neighborhoods in place of attracting blocks.
In this case, the image of the Conley form is a larger subsemilattice of the isolating neighborhoods.
In the next section we present Morse tiles in the setting of regular closed sets which arise naturally in computations, \cite{KMV-1b,KastiKV}.
\end{remark}

\subsection{Regular closed sets} 
\label{sec:regularClosed}

As indicated in \cite{KMV-1b}, for computational purposes it is useful to define Conley forms in the setting of regular closed sets $\scrR(X)$, cf.\ App.\ \ref{sec:closedRegular}.
The set of closed regular sets that are attracting blocks is denoted by $\sABlockR(\varphi)$.
The goal of this section is to prove that the following is a commutative diagram of lattice homomorphisms
\begin{equation}\label{diag:AR111}
\begin{diagram}
\node{\sABlockC(\varphi)\times \sABlockC(\varphi)} \arrow{e,l,A}{\CF^b } \arrow{s,l,A}{  ^{\rc\rc}\times ^{\rc\rc} }\node{\sMTile(\varphi)}\arrow{se,r}{\theta_{\rc\rc}} \arrow{s,l,A}{\theta_{\rc\rc}}\arrow{e,l,V}{\subset}\node{\sIBlock(\varphi)}\\
\node{\sABlockR(\varphi)\times \sABlockR(\varphi)} \arrow{e,l,A}{\CF^b } \arrow{s,l,A}{  \omega\times \omega}\node{\sMTile_\scrR(\varphi)} \arrow{s,l,A}{\Small \Inv} \arrow{e,l,V}{\subset}\node{\sIBlockR(\varphi)}\arrow{s,r,A}{\Small \Inv}\\
\node{\sAtt(\varphi)\times \sAtt(\varphi)} \arrow{e,l,A}{\CF_\sAtt }   \node{\sMorse(\varphi)} \arrow{e,l,V}{\subset}\node{\sIsol(\varphi)}
\end{diagram}
\end{equation}
where $U^{\rc\rc} := \cl \Int U$.

\begin{remark}
\label{rem:ClosedRegularSuffices}
Observe that since the top and bottom rows are as in \eqref{diag:AlphaOmega2} and the vertical maps are surjective, there is no information lost by working with regular closed  sets.
\end{remark}

\begin{lemma}
\label{lem:closedBToRegularB}
If $U\in \sABlockC(\varphi)$, then $U^{\rc\rc}\in \sABlockR(\varphi)$.
\end{lemma}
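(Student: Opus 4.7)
The plan is to verify the two defining properties of $\sABlockR(\varphi)$: that $U^{\rc\rc}$ is a regular closed set, and that it is an attracting block. For the first, I would appeal to the standard fact (recalled in Appendix~\ref{sec:closedRegular}) that the operator $V \mapsto \cl\Int V$ always produces a regular closed set, equivalently, that the composition is idempotent: $\cl\Int\cl\Int U = \cl\Int U$. So $U^{\rc\rc}\in\scrR(X)$ is immediate.

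The second, showing that $U^{\rc\rc}$ is an attracting block, is where the real content lies. Starting from the attracting block condition on $U$, namely $\varphi(t,U)\subset\Int U$ for all $t>0$, I would establish two auxiliary containments. First, since $U$ is closed, $U^{\rc\rc}=\cl\Int U\subset \cl U=U$, so by monotonicity of $\varphi(t,\cdot)$ one has $\varphi(t,U^{\rc\rc})\subset\varphi(t,U)\subset\Int U$. Second, since $\Int U\subset \cl\Int U=U^{\rc\rc}$ and the interior operator is monotone and idempotent, $\Int U=\Int\Int U\subset \Int U^{\rc\rc}$. Chaining these gives
\[
\varphi(t,U^{\rc\rc}) \subset \Int U \subset \Int U^{\rc\rc}\qquad\text{for all } t>0,
\]
which is precisely the defining property of an attracting block.

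The main obstacle, such as it is, is just the bookkeeping between the interiors of $U$ and of its regular closed hull $U^{\rc\rc}$; once $\Int U\subset \Int U^{\rc\rc}$ is in hand, the attracting block property transfers automatically. I would also note that $U^{\rc\rc}$ inherits compactness from $U$ (being a closed subset of the closed set $U$ in a compact metric space), so the image lies in $\sABlockR(\varphi)$ as defined in Appendix~\ref{sec:dynamics}.
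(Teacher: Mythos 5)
Your proof is correct and follows essentially the same route as the paper: both reduce to the chain $\varphi(t,U^{\rc\rc})\subset\varphi(t,U)\subset \Int U\subset \Int U^{\rc\rc}$, the paper obtaining the last step as the equality $\Int U=\Int U^{\rc\rc}$ via the regular-open complement $U^\bot=(\cl U)^c$, while you derive only the needed inclusion from monotonicity and idempotence of $\Int$. The extra remarks on idempotence of $\cl\Int$ and compactness are fine but not needed.
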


\begin{proof}
By assumption $\varphi(t,U^{\rc\rc}) \subset \varphi(t,U) \subset \Int U = \Int U^{\rc\rc}$ for all positive $t\in \T$, where the latter
follows the fact that $\Int U = U^{\bot\bot} = U^{\bot\bot\bot\bot} = \Int \cl \Int U= \Int U^{\rc\rc}$ and $U^\bot = (\cl U)^c$.
\end{proof}

The map $^{\rc\rc}\colon \sABlockC(\varphi) \to\sABlockR(\varphi)$ is a lattice homomorphism by Lemma \ref{regclhom} and $\omega\colon \sABlockR(\varphi) \to \sAtt(\varphi)$ is a lattice homomorphism by  \cite[Theorem 3.15]{KMV-1b}.
As a consequence, we obtain the following three commutative diagrams of lattice homomorphisms.
First,
\begin{equation}
    \label{eq:CtoR}
\begin{diagram}
\node{\sABlockC(\varphi)}\arrow[2]{e,l,A}{^{\rc\rc}}\arrow{se,r,A}{\omega}\node{}\node{\sABlockR(\varphi)}\arrow{sw,r,A}{\omega}\\
\node{}\node{\sAtt(\varphi)}
\end{diagram}
\end{equation}
where the surjectivity of $^{\rc\rc}$ follows from $\sABlockR(\varphi)\subset \sABlockC(\varphi)$.
Furthermore, by Diagram~\eqref{diag:AR60}
\begin{equation}\label{diag:AR111t}
\begin{diagram}
\node{\sABlockC(\varphi)\times \sABlockC(\varphi)} \arrow{e,l,A}{\CF^b} \arrow{s,l,A}{  ^{\rc\rc}\times ^{\rc\rc}}\node{\sMTile(\varphi)} \arrow{se,l}{{\theta_{\rc\rc}}}\arrow{s,r,A}{\theta_{\rc\rc}}\arrow{e,l,V}{\subset}\node{\sIBlock(\varphi)}\\
\node{\sABlockR(\varphi)\times \sABlockR(\varphi)} \arrow{e,l,A}{\CF^b } \node{\sMTile_\scrR(\varphi)}   \arrow{e,l,V}{\subset}\node{\sIBlockR(\varphi)}
\end{diagram}
\end{equation}
and
\begin{equation}\label{diag:AR111b}
\begin{diagram}
\node{\sABlockR(\varphi)\times \sABlockR(\varphi)} \arrow{e,l,A}{\CF^b } \arrow{s,l,A}{  \omega\times \omega}\node{\sMTile_\scrR(\varphi)}\arrow{se,l}{\theta_\scrR} \arrow{s,r,A}{\theta_\scrR} \arrow{e,l,V}{\subset}\node{\sIBlockR(\varphi)}\\
\node{\sAtt(\varphi)\times \sAtt(\varphi)} \arrow{e,l,A}{\CF_\sAtt }   \node{\sMorse(\varphi)} \arrow{e,l,V}{\subset}\node{\sIsol(\varphi).}
\end{diagram}
\end{equation}

The Conley form on $\sABlockC(\varphi)$ is given by $\CF^b(U,U') = U\smin U'$ for $U,U'\in \sABlockC(\varphi)$. 
The Conley form on $\sABlockR(\varphi)$ is given by
\begin{equation}
\label{eq:ConleyFormRegClosed}
\CF^b(N,N') = N\wedge N'^\rc = \cl (N \smin N')\quad \hbox{for $N,N'\in \sABlockR(\varphi),$}
\end{equation}
where the latter follows from Lemma \ref{lem:difference}.

Consider the  homomorphism $^{\rc\rc}\colon \sABlockC(\varphi) \to \sABlockR(\varphi)$.
Via Corollary \ref{cor:existenceTheta} and Eqn.\ \eqref{defn:wedgeRegular} the induced meet semilattice homomorphism $\theta\colon\sMTile(\varphi) \to \sMTileR(\varphi)$ is given by 
\[
\theta_{\rc\rc}(U\smin U') = U^{\rc\rc} - U'^{\rc\rc} := U^{\rc\rc}\wedge U'^{\rc\rc\rc} = \cl (N\smin N')
\]
where $N := U^{\rc\rc}$.
From   Lemma \ref{tiles}, Corollary \ref{cor:existenceTheta} and Lemma \ref{lem:difference} we derive that
\[
\theta_\scrR(N-N') = \omega(N)-\omega(N') = \Inv(N\smin N') = \Inv(\cl(N\smin N'))
= \Inv(N-N'),
\]
which proves the following lemma.
\begin{lemma}
\label{charregcl12}
$A-A' = \theta_\scrR(N - N') = \Inv(N-N')$.
\end{lemma}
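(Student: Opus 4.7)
The plan is to chain together three already-established facts: the definition of $\theta_\scrR$ via Corollary~\ref{cor:existenceTheta}, the characterization of Morse sets as invariants of tile set-differences from Lemma~\ref{tiles}, and the explicit formula for the Conley form on regular closed blocks from Eqn.~\eqref{eq:ConleyFormRegClosed} (i.e.~Lemma~\ref{lem:difference}).

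First, I would establish the left-hand equality $A - A' = \theta_\scrR(N-N')$. Since $\omega\colon \sABlockR(\varphi) \twoheadrightarrow \sAtt(\varphi)$ is a lattice homomorphism, Corollary~\ref{cor:existenceTheta} applied to $\omega$ produces an induced meet semilattice homomorphism $\theta_\scrR\colon \sMTile_\scrR(\varphi)\to \sMorse(\varphi)$ given precisely by $\theta_\scrR(N-N') = \omega(N)-\omega(N')$, which by definition of $A, A'$ is $A-A'$. This is simply reading off the right square of Diagram~\eqref{diag:AR111b}.

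Second, for the right-hand equality I would use the fact that $\sABlockR(\varphi) \subset \sABlockC(\varphi)$, so that the elements $N,N'$ qualify as closed attracting blocks. Applying Lemma~\ref{tiles} with $U = N$ and $U' = N'$ yields
\[
A - A' \;=\; \Inv(N \smin N') \;=\; \Inv\bigl(\cl(N\smin N')\bigr).
\]
Finally, I would invoke Eqn.~\eqref{eq:ConleyFormRegClosed}, which states that the Conley form on $\sABlockR(\varphi)$ is $N - N' = \cl(N\smin N')$. Substituting this into the previous identity gives $A - A' = \Inv(N - N')$, completing the chain.

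There is no real obstacle here, since every ingredient has already been proven. The only conceptual point worth emphasizing in the writeup is that the notation $N - N'$ refers to the \emph{Conley form} in $\sABlockR(\varphi)$ (which outputs a regular closed set via closure), not the naive set-theoretic difference; the proof hinges on recognizing that these two differ only by taking closure, and that applying $\Inv$ is insensitive to this distinction because $\Inv$ of a set equals $\Inv$ of its closure whenever the original set sits inside a closed attracting block.
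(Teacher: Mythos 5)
Your proposal is correct and matches the paper's own proof essentially verbatim: the paper likewise chains Corollary~\ref{cor:existenceTheta} (for $\theta_\scrR(N-N')=\omega(N)-\omega(N')$), Lemma~\ref{tiles} (for $A-A'=\Inv(N\smin N')=\Inv(\cl(N\smin N'))$), and Lemma~\ref{lem:difference} (for $\cl(N\smin N')=N-N'$). Your closing remark distinguishing the Conley form $N-N'$ from the naive set difference is exactly the point the paper relies on.
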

Lemma \ref{charregcl12} together with \eqref{diag:AR111b} yields the bottom half of \eqref{diag:AR111}. The duality between regular closed attracting and repelling blocks is given by the following lemma which explains regular closed Morse tiles as regular intersections of attracting blocks and repelling blocks and
 characterizes the duality in Diagram \eqref{commdual} in this case.
\begin{lemma}
$\sABlockR(\varphi) \xleftrightarrow{^\rc} \sRBlockR(\varphi)$.
\end{lemma}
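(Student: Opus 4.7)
The plan is to exhibit $^\rc$ as an order-reversing bijection between $\sABlockR(\varphi)$ and $\sRBlockR(\varphi)$ by factoring it through the known duality $^c\colon\sABlockC(\varphi)\to\sRBlockO(\varphi)$ of Diagram~\eqref{diag:AR} and showing that closure of open blocks yields regular closed blocks when the input is regular.

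First, I would verify that $^\rc$ maps $\sABlockR(\varphi)$ into $\sRBlockR(\varphi)$. Given $U\in\sABlockR(\varphi)\subset\sABlockC(\varphi)$, the set $U^c$ lies in $\sRBlockO(\varphi)$ by the duality in Diagram~\eqref{diag:AR}. Since $U$ is regular closed, $U = \cl\Int U$, hence $U^c = \Int\cl(U^c)$, showing that $U^c$ is regular open. Consequently $U^\rc = \cl(U^c)$ is regular closed. The remaining point is that $U^\rc$ retains the repelling block property: one pushes the repelling property of the open set $U^c$ to its closure using continuity of $\varphi$ together with the fact that the boundary $\partial U = \partial(U^c)$ is forward--transverse, so $U^\rc$ is in $\sRBlockR(\varphi)$.

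Second, I would establish the analogous direction. Given $N\in\sRBlockR(\varphi)$, an identical argument using the complementary direction of Diagram~\eqref{diag:AR} (complements of closed attracting blocks are open repelling blocks, and vice versa) shows that $N^c$ is an open attracting block, $N^c$ is regular open since $N$ is regular closed, and therefore $N^\rc = \cl(N^c)\in\sABlockR(\varphi)$.

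Third, I would check that $^\rc$ is an involution on both sides. For $U\in\sABlockR(\varphi)$, $U^{\rc\rc} = \cl\Int U = U$ by the definition of regular closed, and similarly for $N\in\sRBlockR(\varphi)$. Combined with the two maps above this yields a bijection. Order-reversal is immediate: $U\subseteq V$ implies $V^c\subseteq U^c$, and taking closures preserves the inclusion, so $V^\rc\subseteq U^\rc$.

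The main obstacle is the verification in step one (and symmetrically in step two) that the closure of the open repelling block $U^c$ remains a repelling block in the regular closed sense. The combinatorial structure of attracting/repelling blocks is not automatically preserved under closure, so this step requires genuinely using that $U$ is \emph{regular} closed: it forces $\partial U$ to coincide with the topological boundary of $U^c$ and prevents the closure from ``absorbing'' any interior recurrent behavior, allowing the open repelling block structure to pass to $\cl(U^c)$ without loss.
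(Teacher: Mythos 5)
Your overall architecture is sound, and the bookkeeping steps (regular closedness of $U^\rc$, the involution $U^{\rc\rc}=\cl\Int U=U$, order-reversal) are correct. But the step you yourself flag as the main obstacle --- that $U^\rc=\cl(U^c)$ actually has the repelling block property --- is not proved, and the mechanism you gesture at would not prove it. There is no ``forward-transversality of the boundary'' hypothesis anywhere in this purely topological setting, and continuity of $\varphi$ alone would only yield $\varphi(t,\cl(U^c))\subset\cl(U^c)$ for $t<0$, i.e.\ invariance of the closure; it cannot produce the strict inclusion into the \emph{interior} that the block property requires. So as written the proof has a genuine gap at its central point.

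The gap closes immediately once you use the actual definition of a block, which is already phrased in terms of closure and interior: $V\in\sRBlockO(\varphi)$ means $\varphi(t,\cl V)\subset\Int V$ for all $t<0$. Taking $V=U^c$ for $U\in\sABlockR(\varphi)$ (this is \cite[Lemma 3.17]{KMV-1a}, the duality underlying Diagram~\eqref{diag:AR}), one gets $\varphi(t,U^\rc)=\varphi(t,\cl(U^c))\subset\Int(U^c)=U^c$ for $t<0$, and regularity of $U$ enters exactly once, to identify $U^c=\Int\cl(U^c)=\Int U^\rc$. That is already the statement $\varphi(t,\cl U^\rc)\subset\Int U^\rc$ for $t<0$, i.e.\ $U^\rc\in\sRBlockR(\varphi)$; nothing needs to be ``pushed to the closure.'' The reverse direction is the same computation with the time direction reversed, which is exactly how the paper argues. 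Your instinct about where regularity is used is half right: it is indispensable, but its role is to identify $\Int U^\rc$ with $U^c$, not to control recurrent behavior near the boundary.
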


\begin{proof}
From  \cite[Lemma 3.17]{KMV-1a} we derive that if $U\in \sRBlockR(X,\varphi)$, then $U^\rc$ satisfies
\[
\varphi(-t,U^\rc) \subset\Int U^c = U^c=\Int \cl U^c = \Int U^\rc,\quad t<0.
\]
The latter follows from the fact that $U^c$ is a regular open set.
From \cite[ Lemma 3.17]{KMV-1a} we also derive that if $U\in \sABlockR(X,\varphi)$, then
\[
\varphi(-t,U^\rc) \subset U^c = \Int U^\rc, \quad t>0,
\]
 which proves that $U^\rc \in \sRBlockR(X,\varphi)$.
\end{proof}

\section{Representations of lattices}
\label{sec:MRandMD}

We have shown that attractors in a dynamical system have the structure of a bounded, distributive lattice, which codifies algebraically the global structure of the dynamical system. From a dynamics point of view, this global structure has been alternatively described in terms of a poset of distinguished invariant sets, the order of which encodes the global structure. From an algebraic point of view, a bounded, distributive lattice is dually equivalent to a poset via Priestley duality as described in Section~\ref{sec:booldual}. Hence, the order on the Priestley space is dynamically defined, and the central issue is the representation of the Priestley space as a poset of invariant sets.

In the previous sections we have identified dynamically distinguished invariant sets, namely the Morse sets, which can be characterized as the image of the specific Conley form $\CF_\sAtt$ on the lattice of attractors represented in the invariant sets. In particular, $\sMorse(\varphi)\cong\sB^{\updownarrow}(\sAtt)$. 
The Conley form is designed to provide a representation of the semilattice structure of $\sB^{\updownarrow}(\sL)$ in a more meaningful semilattice $\sI$.
However, since the Booleanization functor $\sB=\sOcl\circ \sF\circ \sfS$  forgets the order on $\sfS(\sL),$ a representation of $\sfS(\sL)$ as a poset in $\sI$ does not immediately follow. In this section, we show that in the case of finite lattices, the Priestley space can indeed be represented as a poset consisting of elements in $\sI$, but the issue is more subtle in the infinite case.

\subsection{Spectral representations}
\label{subsec:repr1}
Let $\sL$ be a finite distributive lattice. Then the convexity semilattice $\sB^\updownarrow(\sL)$ is the lattice of all convex sets $\sConvex(\sfS(\sL))$ in the spectrum $\sfS(\sL)$. 
The lattice $\sConvex(\sfS(\sL))$ is a (complete) atomic lattice, which is not distributive in general, cf.\ \cite{BirkhoffBen}. The anti-chain of atoms in $\sConvex(\sfS(\sL))$ contains exactly the sets $\{I\}$ where
$I\in \sfS(\sL)$ ignoring the order structure.
Let $\CF\colon \sL\times \sL\to \sI$ be a Conley form on $\sL$, then 
$
\sB^\updownarrow(\sL) \cong \CF(\sL\times\sL) = \sI_\CF,
$
and $\CF$ determines the injective semilattice homomorphism $\gamma\colon\sB^\updownarrow(\sL)\to \sI_\CF$
given by 
\[
\gamma(A\smin B) = \CF(j^{-1}(A),j^{-1}(B))\quad \hbox{for $A,B\in \sB^\down(\sL)$}.
\]
Writing $\{I\}$ as $\{I\} = \down I \smin (\down I \smin \{I\})$ gives
\[
\gamma(\{I\}) = \CF(j^{-1}\bigl(\down I\bigr),j^{-1}\bigl(\down I \smin \{I\}\bigr)).
\]
Since the join irreducible elements of $\sB^\down(\sL)$ are exactly those of the form $\down I$ for $I\in \sfS(\sL)$, the join irreducible elements of $\sL$ are exactly $j^{-1}(\down I)$ for $I\in \sfS(\sL)$, since $j$ is an isomorphism, cf.\ Sect.\ \ref{sec:booldual}.
Consequently, $\{\CF(a,\pred{a})~|~a\in\sJ(\sL)\}=\{\gamma(\{I\})~|~I\in\sfS(\sL)\}$ is a representation of $\sfS(\sL)$ in $\sI$.
This motivates the following definition.
\begin{definition}
Let $\sL$ be a finite, distributive lattice and 
let $\CF\colon \sL \times \sL \to \sI$ be a representation of the Conley form in a meet semilattice $\sI$
and  $\gamma\colon \sB^\updownarrow(\sL)\to \sI$ be the injective semilattice homomorphism given by $\CF$.
The \emph{spectral representation of $\sfS(\sL)$ in $\sI$} is defined to be the poset $\sM(\sL)$ where
\[
\sM(\sL) :=\setof{\gamma\bigl(\{I\}\bigr) \mid I\in \sfS(\sL)} = \setof{a-\pred a \mid a\in \sJ(\sL)}
\]
and
\begin{equation}
\label{mumap2}
\begin{aligned}
\gamma(\setof{I}) \le \gamma(\setof{I'})\quad &\Longleftrightarrow \quad I\subset I',\quad \hbox{ or equivalently}\\
a-\pred a \le a'-\pred a' \quad &\Longleftrightarrow \quad a\le a'.
\end{aligned}
\end{equation}
Since $\gamma$ is an isomorphism, 
\[
\begin{aligned}
\ji\colon \sfS(\sL) &\to \sM(\sL) \\
I &\mapsto \gamma\bigl(\{I\}\bigr) = a-\pred a \quad \hbox{for $a=j^{-1}(\down I).$}
\end{aligned}
\]
is an order isomorphism.
\end{definition}

\begin{lemma}\label{rem:pairdisjoint}
As elements in $\sI_\CF$ it holds that
$\gamma(\{I\} \not = 0$  for all $I\in \sfS(\sL)$ and 
 $\gamma(\{I\})\wedge\gamma(\{I'\})=0$ for all $I\not =  I'$.
 \end{lemma}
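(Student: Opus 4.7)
My plan is to deduce both parts of the lemma directly from structural properties of $\gamma$ that have already been established earlier in the paper. Recall from Theorem~\ref{mainthm12} (and specifically from the proof of Lemma~\ref{twostepequiv2}) that $\gamma\colon \sB^\updownarrow(\sL)\to \sI_\CF$ is an injective meet semilattice homomorphism satisfying $\gamma^{-1}(0) = \varnothing$, and that $\gamma(\varnothing) = \CF(0,1) = 0$ by Proposition~\ref{lem:extraprop12}(ii). These are the only ingredients needed.

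For the nonvanishing assertion, I would first note that since $\sL$ is finite the spectrum $\sfS(\sL)$ is a finite discrete Priestley space, so every singleton $\{I\}$ is a convex subset and hence lies in $\sB^\updownarrow(\sL) = \sConvex(\sfS(\sL))$; this is made explicit by the decomposition $\{I\} = \down I \smin (\down I \smin \{I\})$ used immediately before the lemma statement, where $\down I \smin \{I\}$ is still a down set because $I$ is the maximum of $\down I$. Since $\{I\}\neq\varnothing$, the condition $\gamma^{-1}(0) = \varnothing$ forces $\gamma(\{I\}) \neq 0$ in $\sI_\CF$.

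For the meet-disjointness assertion, I would use that $\{I\}\cap\{I'\} = \varnothing$ whenever $I\neq I'$, and then apply the meet-preservation property of $\gamma$ to obtain
\[
\gamma(\{I\})\wedge \gamma(\{I'\}) \;=\; \gamma\bigl(\{I\}\cap\{I'\}\bigr) \;=\; \gamma(\varnothing) \;=\; 0.
\]

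I do not anticipate any genuine obstacle: the lemma is essentially an unpacking of the fact that $\gamma$ is an injective semilattice isomorphism onto its image together with the trivial observation that distinct singletons in $\sfS(\sL)$ are disjoint. The only point requiring mild care is verifying that the relevant singletons and their intersections all live in $\sB^\updownarrow(\sL)$, which is immediate in the finite setting.
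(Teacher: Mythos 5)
Your proof is correct and follows essentially the same route as the paper: the disjointness part is verbatim the paper's argument (meet-preservation of $\gamma$ applied to $\{I\}\cap\{I'\}=\varnothing$), and your nonvanishing argument via $\gamma^{-1}(0)=\varnothing$ is just a repackaging of the paper's appeal to Proposition~\ref{lem:extraprop12}(iv) applied to $a-\pred a$ with $\pred a<a$, since that proposition is exactly what establishes $\gamma^{-1}(0)=\varnothing$ in Lemma~\ref{twostepequiv2}. Your extra care in checking that $\{I\}=\down I\smin(\down I\smin\{I\})$ lies in $\sB^\updownarrow(\sL)$ in the finite case is sound and harmless.
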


\begin{proof}
Since $\pred a < a$  Proposition \ref{lem:extraprop12}(iv) implies that $a-\pred a\not = 0$.
Note that elements of $\sM(\sL)$ are pairwise disjoint  since $\gamma(\{I\})\wedge\gamma(\{I'\})=\gamma(\{I\}\cap\{I'\})=0$ and $\gamma$ is an isomorphism $\sB^\updownarrow(\sL)\to\sI_\CF.$
\end{proof}

\begin{figure}[hbt]
\centering
{\includegraphics[width=13cm]{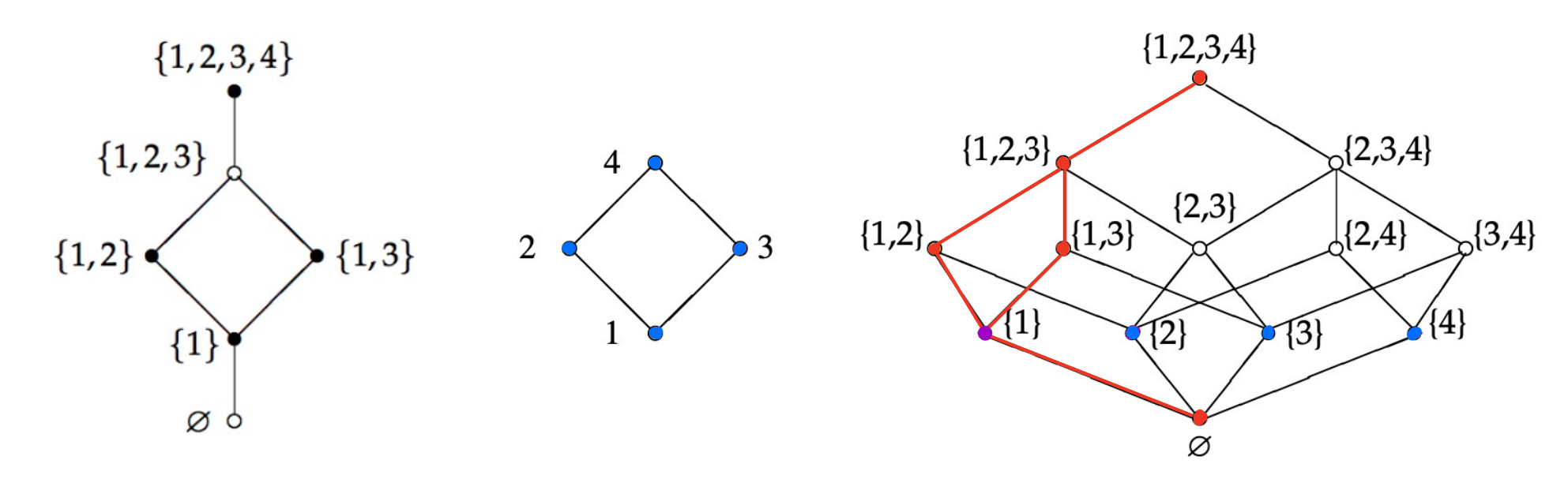}}
\caption{ A finite lattice $\sL=\sO(\sP)$ [left], the poset $\sP$ representing the spectrum [middle] and
the convexity semilattice [right]. The subset in red [right] gives an embedding of $\sL$ into the convexity semilattice, cf.\ Remark \ref{embedding}.} 
\label{1pointpic2} 
\end{figure}

The above construction 
implies that in the finite case, the (clopen) singleton, convex sets in $\sB^\updownarrow(\sL)$
are in one-to-one correspondence with $\sJ(\sL)$
and $\sfS(\sL)$ which is used to construct spectral representations.
In general, when $\sL$ is infinite, $\sB^\updownarrow(\sL)$ is only a subsemilattice of $\sConvex(\sfS(\sL))$, \cite{BirkhoffBen}. Indeed, there are infinite, bounded, distributive lattices that possess clopen, singleton, convex sets that are not associated to a join irreducible elements. 
In this case, representing the spectrum is more subtle and will not be addressed in this paper, cf.\ \cite{GKV}. 
For dynamics, the finite case is often sufficient as we are interested in Morse representations which are finite.

Since $\sM(\sL)$ is order-isomorphic to $\sfS(\sL)$, Birkhoff's Representation Theorem implies  that $\sO(\sM) \cong \sL$.
We denote this isomorphism by $\al\colon \sO(\sM) \to \sL$. 
Let $\CF$ denote the Conley form on $\sL$ represented in $\sI$. 
This gives the diagram:
\begin{equation}
\label{diag:AR551}
\begin{diagram}
\node{\sO(\sM)\times \sO(\sM)} \arrow{e,l,A}{\CF^\sigma } \arrow{s,lr}{\al\times \al}{\cong}\node{\sB^\updownarrow(\sO(\sM))} \arrow{s,lr}{\theta}{\cong}\\
\node{\sL \times \sL} \arrow{e,l,A}{\CF}\node{\sI_\CF} 
\end{diagram}
\end{equation}
where by commutativity the final isomorphism is given by 
\[
\theta(\alpha\smin \beta) = \al(\alpha)-\al(\beta) \hbox{ for all $\alpha,\beta\in\sO(\sM)$, }
\]
and consequently $\theta(\{M\}) = M$ for all $M\in \sM(\sL)$.

When we apply the spectral representation in the dynamical setting using $\CF_\sAtt$, we use the terminology of a {\em Morse representation} in place of spectral representation. For example, consider $\sL = \sAtt(\varphi)$ and the Conley form 
\[
\CF_\sAtt(A,A') = A-A' :=A\cap A'^*.
\]
For a finite sublattice $\sA\subset\sAtt(\varphi)$ the associated Morse representation is given by
\[
\sM(\sA) = \{A-A'~|~A\in \sJ(\sA)\}.
\]
In the combinatorial setting, the Morse representation for any sublattice $\sA$ of $\sAtt(\cF)$ is also
\[
\sM(\sA) = \{\cA-\cA'~|~\cA\in \sJ(\sA)\}.
\]

\subsection{Stable and unstable set representations}
\label{sec:stableandunstable}
Consider the maps
 $W^u\colon \sMorse(\varphi) \to \IS(\varphi)$ and
$W^s\colon \sMorse(\varphi) \to \IS^\pm(\varphi)$  where the latter is an injective semilattice homomorphism, cf.\ Lemma~\ref{injstunst}.
These maps induce a lattice form $\rho^u$ and a Conley form $\CF^s$ given by
\[
\rho^u := W^u \circ \CF_{\sAtt}\quad \text{and}\quad
\CF^s := W^s \circ \CF_{\sRep}.
\]
From the fundamental theorem of attractor-repeller pairs \cite[Theorem 3.19]{KMV-1a}
we have that 
$A\smin \pred A \subset \rho^u(A,\pred A)$, and therefore $\rho^u$ satisfies the additivity property in Proposition \ref{decompinlat}.
Let $\al\colon \sO(\sM)\to \sAtt(\varphi)$ be the injective lattice homomorphism with range $\sA$ from Birkhoff's Representation Theorem.
Then, for $\alpha \in \sO(\sM)$, we have $\alpha = \bigcup_{\alpha'\subset \alpha\atop \alpha'\in \sJ(\sO(\sM))} \alpha'$ and consequently, $A = \bigcup_{A'\subset A\atop A'\in \sJ(\sA)} A'$ where $A'= \al(\alpha')$.
Proposition \ref{decompinlat} 
implies
\begin{equation}
    \label{decompo1}
\al(\alpha) =  \bigcup_{A'\subset A\atop A'\in \sJ(\sA)} \rho^u(A',\pred A') = \bigcup_{M\in \alpha} W^u(M),
\end{equation}
which provides an explicit expression for $\al(\alpha)$. Similarly, since also $\CF^s$ satisfies the additivity property in Proposition \ref{decompinlat}, a representation for repellers can be obtained via the homomorphism
$\al^*\colon \sU(\sM) \to \sRep(\varphi)$ given by
\begin{equation}
    \label{decompo2}
    \al^*(\beta) =  \bigcup_{M\in \beta} W^s(M).
\end{equation}
From Remark \ref{embedding} we have the following commutative diagram
\begin{equation}
\label{diag:AR567}
\begin{diagram}
\node{\sO(\sM)} \arrow{e,l,<>}{^c} \arrow{s,lr}{\al}{\cong}\node{\sU(\sM))} \arrow{s,lr}{\cong}{\al^*}\\
\node{\sA} \arrow{e,l,<>}{^*}\node{\sA^*} 
\end{diagram}
\end{equation}
where we have identified $\CF_{\sAtt}(\omega(X), A) = \omega(X)\cap A^*$ with $A^*$, since $\omega(X)\cap A^*\mapsto W^s\bigl(\omega(X)\cap A^*\bigr) = A^*$ is injective, cf.\ Lemma \ref{injstunst}. This yields the correspondence 
\begin{equation}
    \label{decompo3}
    A=\nu(\alpha)=\bigcup_{M\in \alpha} W^u(M) \;\;\Longleftrightarrow\;\;
    A^*=\al(\alpha)^* = \al^*(\alpha^c)= \bigcup_{M\in \alpha^c} W^s(M).
\end{equation}

\begin{remark}
We leave it to the reader to verify that the homomorphisms $\theta$ and $\theta^*$ induced by $\al$ and $\al^*$ respectively coincide, i.e. $\theta(\alpha\smin \beta) = \theta^*(\alpha\smin \beta)$.
\end{remark}

We can use the above decompositions to obtain a decomposition in terms of connecting orbits. We now show  that the decompositions in \eqref{decompo1} and \eqref{decompo2} can be utilized to relate the partial order on $\sM(\sA)$ to the dynamics of $\varphi$.
Theorem \ref{thm:ji} below provides a dynamical description of this order, which serves as an extension of an attractor-repeller pair and can be used
as a dynamical definition of a Morse representation.

\begin{theorem}\label{thm:ji}
Let $ \sM(\sA)$ be a Morse representation subordinate to a finite sublattice $\sA\subset \sAtt(\varphi)$. 
 Then, the sets $M\in \sM(\sA)$ are compact, nonempty, pairwise disjoint, invariant sets in $X$, and for every $x\in X$ there exists $M\in\sM$ such that $\omega(x) \subset M.$ Moreover, for every complete orbit $\gamma_x$ with $x\not \in \bigcup_{M\in\sM}M$ 
there exist $M,M'\in\sM$  with  $M<M'$ such that
 $\omega(x) \subset M$ and  $\alphaOg \subset M'.$
\end{theorem}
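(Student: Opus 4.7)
The plan is to verify the three conditions in turn, exploiting the explicit form $M = A\cap\pred A^*$ of each Morse set together with the fundamental theorem of attractor-repeller pairs (the partition $X = W^s(A)\sqcup A^*$).

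First, each $M\in\sM(\sA)$ is compact and invariant as the intersection of the compact invariant attractor $A$ with the compact invariant repeller $\pred A^*$, while nonemptiness and pairwise disjointness follow at once from Lemma~\ref{rem:pairdisjoint} applied to the isomorphism $\gamma\colon\sB^\updownarrow(\sA)\to\sI_\CF$. For the existence of $M$ with $\omega(x)\subset M$, I would set $A := \bigwedge\{B\in\sA : \omega(x)\subset B\}$, which exists because $\sA$ is finite and contains $\omega(X)$. The key step is $A\in\sJ(\sA)$: if $A = A_1\vee A_2$ with both $A_i\lneq A$, minimality forces $\omega(x)\not\subset A_i$, hence $x\notin W^s(A_i)$, and the fundamental theorem gives $x\in A_i^*$ for $i=1,2$. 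Thus $x\in A_1^*\wedge A_2^* = (A_1\vee A_2)^* = A^*$, and invariance yields $\omega(x)\subset A\cap A^* = \varnothing$, a contradiction. Applying the same argument to the pair $(\pred A,\pred A^*)$, using $\omega(x)\not\subset\pred A$ from minimality, gives $x\in\pred A^*$ and $\omega(x)\subset\pred A^*$; combined with $\omega(x)\subset A$ this yields $\omega(x)\subset M = A\cap\pred A^*$.

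For the third claim, take a complete orbit $\gamma_x$ with $x\notin\bigcup_{M\in\sM}M$ and set $A' := \bigwedge\{B\in\sA : x\in B\}$. Since every attractor satisfies $A' = W^u(A')$, this equals the smallest attractor in $\sA$ containing $\alpha_{\rm o}(\gamma_x^-)$, and the symmetric version of the preceding argument yields $A'\in\sJ(\sA)$ together with the inclusion $\alpha_{\rm o}(\gamma_x^-)\subset M' = A'\cap\pred{A'}^*$. Since $x\in A'$ and $A'$ is invariant, $\omega(x)\subset A'$; coupled with the minimality of $A$, this forces $A\le A'$. The equality $A = A'$ is ruled out, since it would combine $x\in A' = A$ with $x\in\pred A^*$ (which follows from $\omega(x)\subset M_A\subset\pred A^*$ and invariance) to place $x\in A\cap\pred A^* = M_A$, contradicting $x\notin\bigcup M$. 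Hence $M<M'$ in the order on $\sM(\sA)$ recorded in \eqref{mumap2}.

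The step I expect to be the main obstacle is the symmetric argument producing the inclusion $\alpha_{\rm o}(\gamma_x^-)\subset\pred{A'}^*$. A verbatim transposition of the $\omega$-argument does not go through, because $\alpha_{\rm o}(\gamma_x^-)$ is an invariant set rather than a single point and could a priori distribute across several Morse sets. The cleanest remedy is to rerun the entire second-claim argument on the dual bounded distributive lattice $\sA^*$ (see Remark~\ref{embedding}) via the Conley form of Proposition~\ref{prop:anti}: take $R' := \bigwedge\{R\in\sA^* : \alpha_{\rm o}(\gamma_x^-)\subset R\}$, verify that $R'$ is join-irreducible in $\sA^*$ by the same partition argument (now applied with backward dynamics and $\alpha$-limits in place of $\omega$-limits), and translate the resulting inclusion back into $\sA$ through the anti-isomorphism~$^*$.
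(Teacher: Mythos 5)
Your handling of the first two claims is correct and takes a genuinely more elementary route than the paper: where the paper derives $X=\bigcup_{M\in\sM}W^s(M)$ from the additivity of $\CF^s=W^s\circ\CF_\sRep$ via Proposition~\ref{decompinlat} (Eqn.~\eqref{decompo2a}), you locate the minimal $A\in\sA$ with $\omega(x)\subset A$ directly and extract join-irreducibility from the dichotomy $X=W^s(B)\sqcup B^*$. That argument is sound (the meet $\bigwedge\{B:\omega(x)\subset B\}$ does contain $\omega(x)$ because $\omega(x)$ is a nonempty compact invariant set, as in Lemma~\ref{lem:stlatthom}). One small omission: Lemma~\ref{rem:pairdisjoint} gives $M\wedge M'=0$ in the semilattice, and since $\wedge$ on $\sMorse(\varphi)$ is not set intersection you still need the paper's one-line upgrade to $M\cap M'=\varnothing$ via $M\wedge M'=\omega(M\cap M')$.

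The third claim, however, contains a genuine gap, and it is larger than the one you flag. First, $A':=\bigwedge\{B\in\sA: x\in B\}$ need not contain $x$: for a non-invertible system the meet of attractors is $A_1\wedge A_2=\omega(A_1\cap A_2)$, not $A_1\cap A_2$, and $x\in A_1\cap A_2$ only provides backward orbits of $x$ inside $A_1$ and inside $A_2$ separately, not a single backward orbit inside the intersection; so the step ``$x\in A'$, hence $\omega(x)\subset A'$, hence $A\le A'$'' is unsupported. Second, the identification of $A'$ with the smallest attractor containing $\alphaOg$ conflates quantifiers: $x\in B=W^u(B)$ asserts only that \emph{some} backward orbit through $x$ has its $\alpha$-limit in $B$, whereas the theorem concerns the \emph{given} $\gamma_x^-$, so $A'$ can be strictly smaller than every attractor containing $\alphaOg$. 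Your proposed remedy --- rerunning the argument in $\sA^*$ with $R':=\bigwedge\{R:\alphaOg\subset R\}$ --- is the right instinct but is not a verbatim dualization: the $\omega$-argument hinged on ``$\omega(x)\not\subset B$ iff $x\in B^*$'', and the dual statement you need, that $\alphaOg\not\subset R$ forces $x$ into the attractor dual to $R$, is not symmetric (a point has one $\omega$-limit set but possibly many backward orbits) and requires a separate argument that $\gamma_x^-$ eventually enters an attracting block of that dual attractor. The paper avoids all of this: it reads $x\in W^u(M')$ off the decomposition $\omega(X)=\bigcup_{M}W^u(M)$ of \eqref{decompo2a}, and then obtains $M<M'$ purely algebraically by computing $W^s(M)\cap W^u(M')\subset\al(\alpha')-\al(\beta)=\theta(\alpha'\smin\beta)=\varnothing$ whenever $\down M'\cap\up M=\varnothing$. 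Until you either prove the dual dichotomy for the given backward orbit or invoke the additivity of $\rho^u$ as the paper does, the inclusion $\alphaOg\subset M'$ and the comparison $M<M'$ remain unproven.
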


\begin{proof}
By definition $M=A-\pred A = A\cap \pred A^*\in \sM(\sA)\subset \sInvset(\varphi)$. 
Since $M$ is the  intersection of an attractor and repeller, it is compact and isolated, cf.\  \cite{KMV-1a}
and Lemma \ref{tiles}.
Finally, by Lemma~\ref{rem:pairdisjoint} Morse sets $M$ are nonempty.
Furthermore, by Lemma \ref{rem:pairdisjoint},  $M\wedge M' = \varnothing$ 
 for all
 $M\not = M'$.
This implies that $M\cap M' = \varnothing$ for all $M\not = M'$. 
Indeed, the intersection $M\cap M'$ is compact and forward invariant and thus $M\wedge M' = \Inv(M\cap M') = \omega(M\cap M')$ is nonempty unless $M\cap M'=\varnothing$. 

The decompositions in   \eqref{decompo1} and \eqref{decompo2} imply that
\begin{equation}
  \label{decompo2a}  
X=\bigcup_{M\in\sM}W^s(M)\quad \hbox{and}\quad \omega(X) = \bigcup_{M\in\sM}W^u(M),
\end{equation}
so that for each $x\in X$ there exists $M$ such that $x\in W^s(M)$ and thus $\omega(x)\subset M$.  
Let $\gamma_x$ be a complete orbit with $x\in X\smin \bigr(\bigcup_{M\in \sM} M\bigl)$. 
Then, by the decompositions in \eqref{decompo2a} we have that $x\in W^s(M) \cap W^u(M')$. 
By definition $\omega(x) \subset M$ and $\alphaOg\subset M'$.
 It remains to show that $M<M'$.
 Suppose $M>M'$ or $M\Vert M'$ and write the singleton convex set $\{M\}$ in $\sO(\sM)$ as
  $\{M\} = \alpha \smin \beta$ with $\alpha=\downarrow M\in\sO(\sM)$ and $\beta = (\uparrow M)^c\in\sO(\sM)$, and likewise
   $\{M'\} = \alpha' \smin \beta'$ with $\alpha'=\downarrow M'$ and $\beta' = (\uparrow M')^c$.
 Then,
\[
\begin{aligned}
W^s(M) \cap W^u(M') &= \CF^s\bigl(\al(\alpha),\al(\beta)\bigr) \cap \rho^u\bigl(\al(\alpha'),\al(\beta')\bigr)\\
&=W^s\bigl(\al(\alpha)\cap \al(\beta)^*\bigr) \cap W^u\bigl(\al(\alpha') \cap \al(\beta')^*\bigr)\\
&\subset W^s(\al(\beta)^*) \cap W^u(\al(\alpha')) = \al(\beta)^* \cap \al(\alpha') = \al(\alpha') - \al(\beta)
\end{aligned}
\]
By the mapping property of the Conley form we have
\[
\al(\alpha')-\al(\beta) = \theta(\alpha'\smin \beta) =\varnothing,
\]
since $\alpha'\smin \beta = \down M' \cap \up M =\varnothing$
by the assumptions on $M$ and $M'$, which proves that $M<M'$.
\end{proof}

\subsection{Reconstruction of  attractor lattices}
\label{sec:classMD}

Theorem \ref{thm:ji} establishes dynamical properties of a Morse representation. The next result shows that
the characterization in Theorem \ref{thm:ji} can be used as a dynamical definition of Morse representations.

\begin{theorem}
\label{thm:mainMDthm}
Let $\sM$ be a finite poset of nonempty, pairwise disjoint, compact, invariant sets in $X$. Then $\sM$ 
is a Morse representation subordinate to a finite sublattice $\sA(\sM)\subset \sAtt(\varphi)$ if and only if 
for every $x\in X$ there exists $M\in\sM$ such that 
$\omega(x)\subset M$, and for each
complete orbit $\gamma_x$ with 
$x\not \in \bigcup_{M\in \sM} M$
  there exists $M<M'$ such that $\omega(x)\subset M$ and  $\alphaOg \subset M'.$
The associated 
lattice 
$\sA(\sM)$ is the image of the injective lattice homomorphism $\al\colon\sO(\sM) \to \sInvset(\varphi)$ given by 
\begin{equation}\label{eq:numap}
\alpha \mapsto \al(\alpha) = 
\bigcup_{M\in \alpha} W^u(M) \subset \sAtt(\varphi),
\end{equation}
and $\sM = \sM\bigl(\sA(\sM)\bigr)$.
\end{theorem}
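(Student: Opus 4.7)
The plan is to split into the forward and backward implications. The forward direction follows directly from Theorem \ref{thm:ji} together with the decomposition \eqref{decompo1}, which identifies $\sA$ with the image of $\al$; no additional work is required there.

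For the backward direction I would start with a finite poset $\sM$ of nonempty, pairwise disjoint, compact, invariant sets satisfying the dynamical hypotheses, and define $\al\colon\sO(\sM)\to\sSet(X)$ by $\al(\alpha)=\bigcup_{M\in\alpha}W^u(M)$ as in \eqref{eq:numap}. Three things must then be verified: (a) $\al(\alpha)\in\sAtt(\varphi)$ for every $\alpha\in\sO(\sM)$; (b) $\al$ is an injective lattice homomorphism; and (c) with $\sA(\sM):=\al\bigl(\sO(\sM)\bigr)$, one has $\sM=\sM\bigl(\sA(\sM)\bigr)$.

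The hardest step will be (a), the attractor property. My approach would be to produce, for each down-set $\alpha$, a closed attracting block $U_\alpha$ with $\omega(U_\alpha)=\al(\alpha)$. The Morse hypothesis separates $X$ dynamically: each $x$ either lies in some $M\in\sM$, or its complete orbits run from some $M'\in\sM\smin\alpha$ (when $x\notin\al(\alpha)$) down into some $M\in\alpha$ (when $x$ lies in the stable set of $\al(\alpha)$). Using compactness of $X$ and of the finitely many Morse sets, together with a local neighborhood construction of the type used in building index filtrations, one extracts an open forward-invariant neighborhood of $\al(\alpha)$ that is disjoint from every Morse set in $\sM\smin\alpha$. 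Its $\omega$-limit is then an attractor which, by the definition of $W^u$ and the hypothesis on $\alphaOg$, must coincide with $\al(\alpha)$.

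Items (b) and (c) should follow by largely formal arguments. For (b), preservation of unions is immediate, while Lemma \ref{lem:stlatthom} gives $W^u(M)\cap W^u(M')=W^u(M\cap M')=W^u(\varnothing)=\varnothing$ whenever $M\ne M'$, so $\al(\alpha)\cap\al(\beta)=\al(\alpha\cap\beta)$; injectivity holds because any $M\in\alpha\smin\beta$ satisfies $M\subset W^u(M)\subset\al(\alpha)$ while $M\cap\al(\beta)=\varnothing$ by the same disjointness. For (c), the join-irreducibles of $\sO(\sM)$ are the principal down-sets $\down M$ with immediate predecessor $\down M\smin\{M\}$, and a direct computation using $\CF_\sAtt(A,A')=A\cap A'^*$ together with the disjointness of distinct $W^u(M)$'s yields $\CF_\sAtt\bigl(\al(\down M),\al(\down M\smin\{M\})\bigr)=M$, while \eqref{mumap2} translates the $\sM$-order into the order on $\sM\bigl(\sA(\sM)\bigr)$.
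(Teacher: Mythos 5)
There is a genuine gap in your backward direction, and it sits exactly where the paper is most careful. In steps (b) and (c) you rely on the pairwise disjointness of the unstable sets, writing $W^u(M)\cap W^u(M')=W^u(M\cap M')=\varnothing$ and citing Lemma~\ref{lem:stlatthom}. That lemma is a statement about \emph{stable} sets $W^s$, and Remark~\ref{stunsth} explicitly warns that its analogue for $W^u$ fails for non-invertible systems $\varphi\colon\T^+\times X\to X$, which is the setting of the theorem. A priori a point $x\notin\bigcup_{M\in\sM}M$ may admit two different backward orbits whose limits $\alphaOg$ lie in two different Morse sets, so the sets $W^u(M)$ need not be disjoint before the attractor structure is established; without that disjointness your meet-preservation, your injectivity argument, and the computation $\CF_\sAtt\bigl(\al(\down M),\al(\down M\smin\{M\})\bigr)=M$ all collapse. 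This is precisely why the paper does not work with $W^u$ at all until the last line: it first proves that each \emph{minimal} Morse set is an attractor via the no-backward-orbit criterion (Lemma~\ref{lem:MDrec1}), identifies its dual repeller as $\bigcup_{M\neq M_0}W^s(M)$ (Lemma~\ref{lem:MDrec2}), then peels off the minimal elements and restricts to the intersection of the resulting repellers to run an induction, producing an injective lattice homomorphism $\al^*\colon\sU(\sM)\to\sRep(\varphi)$ built from the genuinely disjoint sets $W^s(M)$; the formula \eqref{eq:numap} in terms of $W^u$ is recovered only at the very end through the duality \eqref{decompo3}, after $\al(\alpha)=\al^*(\alpha^c)^*$ is already known to be an attractor.

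Your step (a) is also not yet an argument where the real difficulty lies. For a minimal Morse set a compact neighborhood avoiding the other Morse sets works because the hypothesis kills all backward orbits inside it; but for a general down-set $\alpha$ your proposed forward-invariant neighborhood of $\al(\alpha)=\bigcup_{M\in\alpha}W^u(M)$ presupposes that this union is compact and that its closure is separated from every Morse set in $\sM\smin\alpha$, and neither closedness of $W^u(M)$ nor such separation is available for a bare Morse set in a non-invertible system. The paper's dual construction sidesteps this entirely, since repellers and their dual attractors are compact by definition. To salvage your route you would either have to restrict to invertible $\varphi$ (where Remark~\ref{stunsth} does give you the $W^u$ lattice homomorphism) or reproduce the paper's induction on minimal elements in the $W^s$ picture first.
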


The ``only if" direction is Theorem~\ref{thm:ji}, so the proof of Theorem~\ref{thm:mainMDthm} is divided into two lemmas in which we assume the second set of conditions stated in the theorem.

\begin{lemma}
\label{lem:MDrec1}
Let $M_0\in \sM$ be a minimal element. 
Then, $M_0$ is an attractor.
\end{lemma}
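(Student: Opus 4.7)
The plan is to prove that $M_0$ is an attractor by showing that its stable set $W^s(M_0) := \{x \in X \mid \omega(x) \subset M_0\}$ contains an open neighborhood of $M_0$; since $M_0$ is compact and invariant, this will yield $M_0 = \omega(W^s(M_0))$, making $M_0$ an attractor with basin $W^s(M_0)$. First I would fix an open neighborhood $V$ of $M_0$ with $\cl V \cap M = \varnothing$ for every $M \in \sM$ with $M \neq M_0$, which is possible because $\sM$ is a finite collection of pairwise disjoint compact sets. Suppose, toward a contradiction, that $W^s(M_0)$ does not contain an open neighborhood of $M_0$: then there exist $x_n \to x^* \in M_0$ with $\omega(x_n) \not\subset M_0$, and by hypothesis $\omega(x_n) \subset M_n$ for some $M_n \in \sM \smin \{M_0\}$, so $\omega(x_n) \cap \cl V = \varnothing$.

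Next, set $s_n := \sup\{t \geq 0 \mid \varphi([0,t], x_n) \subset \cl V\}$, which is finite and satisfies $\varphi(s_n, x_n) \in \partial V$. For any fixed $T > 0$, joint continuity of $\varphi$ gives $\varphi([0,T], x_n) \to \varphi([0,T], x^*) \subset M_0 \subset V$ in Hausdorff distance, whence $s_n \geq T$ for large $n$; therefore $s_n \to \infty$. After passing to a subsequence, $\varphi(s_n, x_n) \to z \in \partial V$. The choice of $V$ then forces $z \not\in \bigcup_{M \in \sM} M$, since $z \in \cl V$ rules out every $M \neq M_0$ and $z \in \partial V \subset V^c$ rules out $M_0$.

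Now construct a complete orbit $\gamma$ through $z$ via a standard diagonal compactness argument: for each $t$ in a countable dense subset of $\T^+$ the points $\varphi(s_n - t, x_n)$ lie in $\cl V$ for $n$ large (using $s_n \to \infty$), and iterated subsequential extraction yields limits $z_{-t} \in \cl V$ satisfying $\varphi(t - s, z_{-t}) = z_{-s}$ and $\varphi(t, z_{-t}) = z$ for $t \geq s \geq 0$, which extend by continuity to a full complete orbit $\gamma$ with $\gamma(-t) \in \cl V$ for all $t \geq 0$. Thus $\alphaO(\gamma^-)$ is a nonempty compact invariant subset of $\cl V$. For any $y \in \alphaO(\gamma^-)$, any complete orbit $\gamma_y$ through $y$ lies in $\alphaO(\gamma^-) \subset \cl V$ by invariance, and $\omega(y) \subset \alphaO(\gamma^-)$; by the hypothesis these nonempty sets are contained in Morse sets, necessarily $M_0$ (the only element of $\sM$ intersecting $\cl V$). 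If $y \not\in \bigcup_M M$, the hypothesis then gives $M < M'$ with $M = M' = M_0$, a contradiction; hence $y \in \bigcup_M M$, and together with $y \in \cl V$ this forces $y \in M_0$. Therefore $\alphaO(\gamma^-) \subset M_0$.

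Finally, $\omega(z) \subset M_z$ for some $M_z \in \sM$, and since $z \not\in \bigcup_M M$ the hypothesis applied to $\gamma$ yields $M_z < M'$ with $\alphaO(\gamma^-) \subset M'$. Since $\alphaO(\gamma^-)$ is a nonempty subset of $M_0$ and distinct Morse sets are disjoint, $M' = M_0$, so $M_z < M_0$, contradicting the minimality of $M_0$. This contradiction shows $W^s(M_0)$ contains an open neighborhood of $M_0$ and completes the proof. The main obstacle is the construction of $\gamma$ in the potentially non-invertible setting together with the recursive use of the hypothesis to pin down $\alphaO(\gamma^-) \subset M_0$.
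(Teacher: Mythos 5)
There is a genuine gap at the hinge of your argument: the claim that once $W^s(M_0)$ contains an open neighborhood of $M_0$, compactness and invariance of $M_0$ ``will yield $M_0=\omega(W^s(M_0))$, making $M_0$ an attractor.'' This implication is false for a general compact invariant set: pointwise attraction (every $x$ in a neighborhood has $\omega(x)\subset M_0$) does not give the uniform attraction required by the definition $\omega(U)=M_0$, because $\omega(U)$ is built from limits of $\varphi(t_n,x_n)$ with $t_n\to\infty$ and \emph{varying} $x_n$, and the time needed for $x$ to enter a small neighborhood of $M_0$ need not be bounded on $U$. The standard counterexample is the flow $\dot\theta=1-\cos\theta$ on the circle: $W^s(\{0\})$ is the whole space, yet $\{0\}$ is not an attractor. (That example violates the hypotheses of the lemma, so the lemma itself survives, but it shows your closing step cannot be justified by ``compact and invariant'' alone -- it must again invoke the order hypotheses, which you do not do at that point.) Everything else you do -- finite first-exit times $s_n$ from $\cl V$, $s_n\to\infty$, the limit point $z\in\cl V\smin V$ carrying a backward orbit in $\cl V$, the identification $\alphaO(\gamma^-)\subset M_0$, and the contradiction $M_z<M_0$ with minimality -- is essentially sound and is exactly the right mechanism; it just proves the weaker statement that $W^s(M_0)$ contains a neighborhood, not that $M_0$ is an attractor.

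The fix is to aim your exit-time argument at the correct target. The paper restricts to the surjective system on $X'=\omega(X)$, takes a compact neighborhood $N\supset M_0$ meeting no other Morse set, and shows (using minimality exactly as you do) that no point of $N\smin M_0$ admits a backward orbit contained in $N$; it then invokes the backward-orbit characterization of attractors \cite[Lemma 3.11]{KMV-1a} and lifts the conclusion from $X'$ to $X$ via \cite[Proposition 3.7]{KMV-1a}. Your steps already establish precisely this no-backward-orbit condition (any $z\in\cl V\smin M_0$ with a backward orbit in $\cl V$ forces $\alphaO(\gamma_z^-)\subset M_0$ and hence some $M<M_0$), so you should either cite such a characterization or supply its proof, rather than passing through $W^s(M_0)$. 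Two smaller points: your assertion that $\varphi(s_n,x_n)\in\partial V$ uses continuity of time and fails for $\T=\Z$ (where the first point outside $\cl V$ need not lie on the boundary), so the discrete case needs a separate phrasing; and ``any complete orbit $\gamma_y$ through $y$ lies in $\alphaO(\gamma^-)$'' should read ``some complete orbit,'' which is all invariance provides and all your argument needs.
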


\begin{proof}
The set $X'=\omega(X)$ is a compact metric space, and the restriction $\varphi'=\varphi|_{X'}$ is a surjective  dynamical system  on $X'$.
Due to the invariance of both $X'$ and the sets $M\in \sM$ we have that for every $x\in X'\smin (\bigcup_{M\in \sM} M)$,
  there exists $M<M'$ such that $\omega(x) \subset M$ and
$\alphaOg \subset M'$.
By assumption
we can choose a compact neighborhood $N\supset M_0$ with $N\cap M=\varnothing$
for all $M\not = M_0$. For $x \in N\smin M_0$ the assumptions imply that $\alphaOg \subset M \not =M_0$ for all backward orbits $\gamma_x^-$.
Consequently, there are no backward orbits $\gamma_x^-\colon \T^-\to N$ for all $x\in N\smin M_0$. By \cite[Lemma 3.11]{KMV-1a}
the set $M_0$ is an attractor for $\varphi'$. By \cite[Proposition 3.7]{KMV-1a}, $M_0$ is also an attractor for $\varphi$.
\end{proof}

\begin{lemma}
\label{lem:MDrec2}
Let $M_0 \in \sM$ be a minimal element. Then
\[
R = \bigcup_{M\not = M_0} W^s(M) 
\]
is the   repeller dual to $A=M_0$.
\end{lemma}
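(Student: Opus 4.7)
The plan is to establish $R = A^*$ by producing two partitions of $X$ and matching them. By Lemma~\ref{lem:MDrec1}, $A := M_0$ is an attractor, so its dual repeller $A^* \in \sRep(\varphi)$ is well-defined. The standard attractor--repeller dichotomy developed in \cite{KMV-1a} gives the partition
\[
X = W^s(A) \sqcup A^*,
\]
where $W^s(A) = \{x \in X \mid \omega(x) \subset A\}$ is the basin of attraction of $A$. Since $A = M_0$ gives $W^s(A) = W^s(M_0)$, it suffices to produce the companion partition $X = W^s(M_0) \sqcup R$.

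For this companion partition, I would apply the defining property of a Morse representation: for every $x \in X$ there exists $M \in \sM$ with $\omega(x) \subset M$. Because $X$ is a compact metric space, $\omega(x)$ is nonempty, and because the elements of $\sM$ are pairwise disjoint, there is a unique $M = M(x)$ containing $\omega(x)$. This well-defined assignment $x \mapsto M(x)$ produces the disjoint decomposition $X = \bigsqcup_{M \in \sM} W^s(M)$, the disjointness coming directly from uniqueness of $M(x)$. Splitting off the component indexed by $M_0$ gives exactly $X = W^s(M_0) \sqcup R$, and matching this with $X = W^s(A) \sqcup A^*$ forces $R = A^*$.

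The main potential obstacle is invoking the attractor--repeller dichotomy $X = W^s(A) \sqcup A^*$ in the possibly non-invertible setting: one must verify that any $x$ with $\omega(x) \cap A \neq \varnothing$ actually satisfies $\omega(x) \subset A$, which rests on the forward invariance of a trapping neighborhood of $A$ and on $\omega(x)$ being invariant. This is part of the classical attractor theory developed in \cite{KMV-1a}, so here it can be cited as a black box rather than re-proved; once it is in hand, the argument is essentially the bookkeeping comparison of the two partitions described above.
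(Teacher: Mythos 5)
Your proof is correct and takes essentially the same route as the paper's: both arguments rest on the characterization of the dual repeller as the set of points whose omega-limit set avoids $M_0$ (equivalently, $A^* = X\smin W^s(A)$, from \cite{KMV-1a}) combined with the Morse-representation hypothesis that every $\omega(x)$ lies in a unique $M\in\sM$. The paper phrases this as a double inclusion $R\subset M_0^*$ and $M_0^*\subset R$ rather than as a comparison of two partitions of $X$, but the content is identical.
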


\begin{proof}
By Proposition 3.16 in \cite{KMV-1a}, since $M_0$ is an attractor, cf.\ Lemma \ref{lem:MDrec1},  the dual attractor of $M_0$ is characterized by $M_0^* = \{x\in X~|~\omega(x) \cap M_0 = \varnothing\}$.
Suppose $x\in R$, then $\omega(x) \subset M$ for some $M\not = M_0$, and therefore $R\subset M_0^*$.
Conversely, if $x\in M_0^*$, then $\omega(x) \subset M$ with $M \not = M_0$,
which implies $M_0^*\subset R$, and thus $M_0^* = R$.
\end{proof}

\begin{proof}[Proof of Theorem \ref{thm:mainMDthm}]
Since the sets $W^s(M)\in \IS^\pm(\varphi)$ for $M\in \sM$ are mutually disjoint sets in $\sI = \IS^\pm(\varphi)$,
the map
\begin{equation}
\label{eqn:injrep12}
\begin{aligned}
\al^*\colon\sU(\sM) &\rightarrowtail \IS^\pm(\varphi)\\
\beta &\mapsto \bigcup_{M\in \beta} W^s(M) 
\end{aligned}
\end{equation}
defines a injective lattice homomorphism and the range is denoted by $\sA^*(\sM)$, cf.\ Lemma \ref{injstunst}.

Lemmas \ref{lem:MDrec1} and \ref{lem:MDrec2} show that 
$R=\bigcup_{M\not = M_0} W^s(M) = \al^*\bigl((\down M_0)^c\bigr)$
are repellers for all minimal elements in $M_0 \in \sM$.
Let $X'$ be the intersection of these repellers, which is again a repeller, and
let $\sM'$ be the poset obtained from $\sM$ by removing all minimal elements. Then, $\sM'$ satisfies the conditions of Theorem \ref{thm:mainMDthm} in $X'$.
Repeat the above lemmas in $X'$. By \cite[Proposition 3.28]{KMV-1a} repellers in $X'$ are repellers in $X$, and thus by exhausting the poset $\sM$ we establish that all elements of the form  $\al^*\bigl((\down M)^c\bigr)$, $M\in \sM$,
are repellers in $X$. 
Since the elements $\al^*\bigl((\down M)^c\bigr)$ are meet-irreducible, all elements in $\sA^*(\sM)$, except for $\al^*(\sM)$, are meets of meet-irreducible repellers. By \eqref{decompo3}
$\al^*(\sM) = X$, a repeller, which establishes $\al^*\colon \sU(\sM)\to \sRep(\varphi)$ as a injective lattice homomorphism. 
Consequently, $\alpha \mapsto \al^*(\alpha^c)^* \in \sAtt(\varphi)$ is a injective lattice homomorphism
$\al\colon \sO(\sM) \to \sAtt(\varphi)$ by Diagram~\eqref{diag:AR567}.

 Moreover, for $\alpha\smin \beta = \{M\}$ we have
\[
\begin{aligned}
\al(\alpha) -\al(\beta) &= \al^*(\alpha^c)^*-\al^*(\beta^c)^*\\
&=\al^*(\alpha^c)^*\cap \al^*(\beta^c) = \al^*(\beta^c) - \al^*(\alpha^c)\\
&= \theta^*(\beta^c\smin \alpha^c)  = \theta^*(\alpha\smin \beta) = M,
\end{aligned}
\]
which implies, by \eqref{decompo3}, that $\al$ is given by \eqref{eq:numap} completing
the proof.
\end{proof}

Given a finite attractor lattice $\sA\subset \sAtt(\varphi)$, then $\sM(\sA)$ satisfies the hypotheses of Theorem \ref{thm:mainMDthm}
and the associated attractor lattice $\sA(\sM)$ in \eqref{eq:numap} is isomorphic to $\sA$ due to Birkhoff's Representation Theorem and the attractors coincide,
which shows that $\sA(\sM(\sA))  = \sA$. 
We conclude
\begin{equation}
\label{eqn:firstid}
\sA\circ \sM = \id\quad\hbox{and}\quad \sM\circ \sA = \id.
\end{equation}

\begin{remark}
The above characterization and construction of Morse representations can also be implemented for finite binary relations $\cF\subset \cX\times\cX$. In \cite[Defn.\ 3.9]{KMV-0} a dynamical definition
of Morse representation is given. The results in Theorem \ref{thm:ji} and Theorem \ref{thm:mainMDthm} also hold in this setting.
\end{remark}

\section{Morse decompositions}
\label{subsec:repr2}
Let $\sP$ be a finite poset.
 A lattice homomorphism $\sO(\sP) \to \sL$ can be factored through its range $\sA\subset \sL$, i.e. 
$\sO(\sP) \twoheadrightarrow \sA \rightarrowtail \sL$.
This yields the factorization 
$\sfS(\sL) \twoheadrightarrow \sfS(\sA) \hookrightarrow \sfS(\sO(\sP))$.
Given a spectral representation $\sM(\sA)$ via a Conley form on $\sA$ and Birkhoff's Representation Theorem, we obtain
\[
\sfS(\sL) \twoheadrightarrow \sM(\sA) \hookrightarrow \sP.
\]
In the context of dynamics, we make the following definition.
\begin{definition}
\label{defn:MD1}
Let $\sP$ be a finite poset and $\sA$ be the image of a lattice homomorphism $\sO(\sP)\to\sAtt$. The order-embedding $\pi\colon\sM(\sA) \hookrightarrow \sP$ is called the \emph{Morse decomposition} dual to the lattice epimorphism
$\sO(\sP) \twoheadrightarrow \sA$.
\end{definition}

The term Morse decomposition was first defined in Conley  theory in  the setting of continuous time dynamical systems via labelings of collections of invariant sets by a poset whose order is consistent with the dynamics, cf.\  \cite{Conley}.
By reformulating this concept in terms of embeddings of posets we obtain
a formulation of Morse decomposition consistent with the algebraic theory developed in this paper.
Here we emphasize the algebraic nature of a Morse decomposition as an order-embedding from a Morse representation into a poset.
The importance of the role of the poset $\sP$ and the information it provides about the dynamical system becomes most apparent in computations
where the poset $\sP$ is the computable object, cf.\ Section~\ref{substr}.
Generally, we refer to a Morse decomposition without mentioning the dual lattice homomorphism.

\subsection{Tessellated Morse decompositions}
\label{tessel}
In this section we present a dynamically meaningful choice of poset $\sP$ in a Morse decomposition.
Let $\sN\subset \sABlockR(X,\varphi)$ be a finite sublattice  of regular closed attracting blocks, and
consider the  Conley form given in Section~\ref{sec:regularClosed} in the setting of regular closed attracting neighborhoods.
From~\ref{sec:closedRegular} and Diagram~\eqref{diag:AR111} 
we derive the commutative diagrams
\begin{equation}\label{diag:AR5155}
\begin{diagram}[1.9]
\node{\sN} \arrow{e,l,<>}{^\rc} \arrow{s,l,A}{ \omega}  \node{\sN^\rc}\arrow{s,r,A}{  \alpha} \\
\node{\sA} \arrow{e,l,<>}{^*}   \node{\sA^*}
\end{diagram}
\qquad\qquad
\begin{diagram}
\node{\sN \times \sN} \arrow{s,l,A}{\omega\times \omega}\arrow{e,l,A}{\CF^b }   \node{\sMTile_\scrR(\sN)} \arrow{s,r,A}{\Small  \Inv}\\
\node{\sA \times \sA} \arrow{e,l,A}{\CF_\sAtt}   \node{\sMorse(\sA)} 
\end{diagram}
\end{equation}
where $\sMTile_\scrR(\sN)$ denotes the image of $\CF^b$ restricted to $\sN\times\sN$ and $\sMorse(\sA)$ denotes the image of $\CF_\sAtt$ restricted to $\sA\times\sA$.
As in Section~\ref{subsec:repr1} we obtain a spectral representation in $\sMTile_\scrR(\sN)$ that is called a \emph{Morse tessellation} and denoted by
\[
\sT(\sN) = \bigl\{ T=N-\pred N ~|~ N\in \sJ(\sN)\bigr\}
\]
where $N-\pred N=  \cl(N \smin \pr N)$, and $N-\pred N\le N'-\pred N'$ if and only if $N\subset N'$. 
Hence $\sT = \sT(\sN)$ is a poset, and the map $\sJ(\sN) \to \sT(\sN)$ given by $N\mapsto N-\pred{N}$ is an order-isomorphism.
Moreover, 
the functoriality of Birkhoff's Representation Theorem yields the order-embedding $\pi\colon\sM(\sA) \hookrightarrow \sT(\sN)$ subordinate to the lattice surjection $\omega\colon \sN \twoheadrightarrow \sA$.

\begin{definition}
\label{defn:TMD}
Let $\sT(\sN)$ be a Morse tessellation of regular closed sets subordinate to the sublattice
$\sN \subset \sABlockR(\varphi)$. Then, the homomorphism $\pi\colon\sM(\sA)   \hookrightarrow \sT(\sN)$ is called a \emph{tessellated Morse decomposition}
subordinate to  $\omega\colon\sN\twoheadrightarrow \sA$.
\end{definition}
For a given  tessellated Morse decomposition
$\pi\colon \sM(\sA) \hookrightarrow \sT(\sN)$, the Morse tessellation $\sT(\sN)$ plays the role of the poset $\sP$ in the definition of Morse decomposition.

\begin{remark}\label{rmk:aboutp}
Observe that by Corollary~\ref{cor:existenceTheta} $\theta\colon \CF^b\bigl(\sO(\sT(\sN)) \times \sO(\sT(\sN))\bigr) \to \CF_\sAtt(\sA\times \sA)$ is a semilattice homomorphism.
Furthermore, by Lemma~\ref{tiles} $\theta=\Inv$.
The map $\pi\colon \sM(\sA) \hookrightarrow \sT(\sN)$ is a order-embedding. 
If we identify the elements of $\sT(\sN)$ with the singleton sets in $\CF^b\bigl(\sO(\sT(\sN)) \times \sO(\sT(\sN))\bigr)$, then 
\[
\Inv \circ \pi = \theta\circ \pi = \id_{\sM(\sA)},
\]
and $\theta=\Inv$ acts as a left-inverse for $\pi$.
Therefore, the Morse sets can be recovered as the maximal invariant sets within the Morse tiles.
Observe that since $\varnothing$ is in the range of $\theta$, we capture the possibility that the maximal invariant set in a Morse tile may be empty. 
\end{remark}

\begin{remark}
\label{differMT}
One can also define tessellated Morse decomposition via $\sANbhd(\varphi)$ or $\sANbhdR(\varphi)$.
\end{remark}

\subsection{Spans and combinatorial models}
\label{substr}

For a given dynamical system $\varphi$ in this paper we have constructed the following \emph{span} in the category of bounded distributive lattices
\[
\begin{diagram}
\node{\scrR(X)} \node{\sABlockR(\varphi)}\arrow{w,l,V}{\supset}\arrow{e,l,A}{\omega} \node{\sAtt(\varphi)}
\end{diagram}.
\]
Spans can be used to define equivalence classes of dynamical systems based on their gradient behavior. Two dynamical systems $(X,\varphi)$ and $(Y,\psi)$ are \emph{span equivalent} if 
there exist isomorphisms such that following diagram commutes
\[
\begin{diagram}
\node{\scrR(X)} \arrow{s,l,<>}{}\node{\sABlockR(\varphi)} \arrow{s,l,<>}{}\arrow{w,l,V}{\supset}\arrow{e,l,A}{\omega} \node{\sAtt(\varphi)} \arrow{s,l,<>}{}\\
\node{\scrR(Y)} \node{\sABlockR(\psi)}\arrow{w,l,V}{\supset}\arrow{e,l,A}{\omega} \node{\sAtt(\psi).}
\end{diagram}
\]
The analogue of a span in the category of finite distributive lattices is given by
\begin{equation}\label{finite99}
\begin{diagram}
\node{\sSet(\cX)} \node{\sO(\sP)}\arrow{w,l,V}{\iota}\arrow{e,l,A}{h} \node{\sO(\sQ),}
\end{diagram}
\end{equation}
where $\cX$ is a finite set and $\sP$ and $\sQ$ are finite posets from Birkhoff's Representation Theorem.
Spans in the category of finite distributive lattices can be equivalently described through finite binary relations $\cF\subset \cX\times \cX$.
To be more precise the extension of Birkhoff's Representation Theorem in \cite{KastiKV} yields the following
representation of a finite span in terms of a binary relation $\cF$, i.e.\
\eqref{finite99} can be equivalently described by
\begin{equation*}
\label{model2aabd}
\begin{diagram}
\node{\sSet(\cX)} 
\node{\IS^+(\cF)}\arrow{w,l,V}{\supset}\arrow{e,l,A}{\bomega} \node{\sAtt(\cF),}
\end{diagram}
\end{equation*}
where $\bomega$ is the omega limit set in the setting of binary relations, cf.\ Eqn.\ \eqref{eqn:bomega} and \cite{KMV-1b}.
We emphasize that
 the choice of $\cF$ is not unique, cf.\ \cite{KastiKV}.
The next step is to consider diagrams of the form
\begin{equation}
\label{model212}
\begin{diagram}
\node{\scrR(X)}\node{\sABlockR(\varphi)} \arrow{w,l,V}{\supset}\arrow{e,l,A}{\omega} \node{\sAtt(\varphi)} \\
\node{\sSet(\cX)}\arrow{n,l}{|\cdot |} \node{\IS^+(\cF)}\arrow{n,l}{|\cdot |}\arrow{w,l,V}{\supset}\arrow{e,l,A}{\bomega} \node{\sAtt(\cF),}\arrow{n,l}{c}
\end{diagram}
\end{equation}
where the second homomorphism is a restriction of the first.
We show that if the third homomorphism exists and the diagram commutes, then it is uniquely defined by $c=\omega(|\cdot|)$.

\begin{remark}
Typically in applications, $\cX$ is a labeling of
the atoms of
a subalgebra of regular closed sets, ie.~a grid cf.\ \cite{KMV-1b}, and the map $|\cdot|$ is the evaluation map
$$
|\cU|=\bigcup_{\xi\in\cU}|\xi|,
$$
which is injective. Also, in the definition of span one may consider sublattices of $\scrR(X)$ and $\sSet(\cX)$, which is useful in some applications.
\end{remark}

 We refer to the above diagram as a \emph{commutative combinatorial model} for $\varphi$, see \cite{KastiKV}. %
Recall from \cite{KMV-0,KMV-1a,KMV-1b} that a way to combinatorialize a dynamical system is to discretize both time and space. 
In this section we explain combinatorialization from an algebraic point of view.
In order to do so we introduce two hypotheses.
First, a finite binary relation   $\cF$ is called a \emph{weak outer approximation} if
\begin{enumerate}
    \item [(W)] $\varphi\bigl(t,|\xi|\bigr) \subset \Int |\Gamma^+(\xi)|$
for all $t>0$,
\end{enumerate}
where $\Gamma^+(\xi)$ denotes the forward image of $\xi$ under $\cF$.
The commutativity of the first square in \eqref{model212} is equivalent to (W) by Theorem~5.3 in \cite{KastiKV}.
In order to characterize commutativity of the second square in \eqref{model212}  we use an additional 
criterion for $\cF$
\begin{enumerate}
    \item [(L)] $\omega(|\xi|) \subset |\bomega(\xi)|$ for all $\xi \in \cX$.
\end{enumerate}

\begin{theorem}
\label{critWL}
Let $\cF\subset \cX\times \cX$ be a finite, binary relation. 
Diagram \eqref{model212} commutes if and only if $\cF$ satisfies {\em (W)} and {\em (L)}. 
In this case $c=\omega(|\cdot|)$.
\end{theorem}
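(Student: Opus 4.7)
The plan is to split the biconditional into the two squares. The equivalence of the commutativity of the left square with condition (W) is already Theorem~5.3 in~\cite{KastiKV}, so I shall invoke it at the outset; this also means I may freely use (W) when analyzing the right square in either direction.

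For the direction $(\Leftarrow)$, I assume both (W) and (L) and define $c := \omega \circ |\cdot|$ on $\sAtt(\cF)$. Well-definedness into $\sAtt(\varphi)$ is immediate: any $\cA\in\sAtt(\cF)$ lies in $\IS^+(\cF)$, so (W) yields $|\cA|\in\sABlockR(\varphi)$ and hence $\omega(|\cA|)\in\sAtt(\varphi)$. The substantive step is commutativity, i.e.\ $\omega(|\cU|)=\omega(|\bomega(\cU)|)$ for every $\cU\in\IS^+(\cF)$. The inclusion $\omega(|\bomega(\cU)|)\subset\omega(|\cU|)$ follows from $\bomega(\cU)\subset\cU$ by monotonicity of $\omega$. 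For the reverse inclusion I apply (L) pointwise: for each $\xi\in\cU$, $\omega(|\xi|)\subset|\bomega(\xi)|\subset|\bomega(\cU)|$ by monotonicity of $\bomega$. Using the standard identity $\omega\bigl(\bigcup_{\xi\in\cU}|\xi|\bigr)=\bigcup_{\xi\in\cU}\omega(|\xi|)$ (valid for finite unions of compact sets in a compact metric space), this gives $\omega(|\cU|)\subset|\bomega(\cU)|$, and then applying $\omega$ to both sides and using that the attractor $\omega(|\cU|)$ is invariant under $\omega$ yields $\omega(|\cU|)\subset\omega(|\bomega(\cU)|)$.

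For the direction $(\Rightarrow)$, I assume commutativity of the diagram. The first square gives (W). Applying the right-square commutativity to $\cU=\cA\in\sAtt(\cF)$, for which $\bomega(\cA)=\cA$, forces $c(\cA)=\omega(|\cA|)$, so $c=\omega\circ|\cdot|$ as claimed. To extract (L), I fix $\xi\in\cX$ and take $\cU=\Gamma^+(\xi)=\bigcup_{n\ge 0}\cF^n(\{\xi\})$, which lies in $\IS^+(\cF)$ and satisfies $\bomega(\cU)=\bomega(\xi)$. Commutativity gives $\omega(|\cU|)=\omega(|\bomega(\xi)|)$. Since $|\xi|\subset|\cU|$, monotonicity gives $\omega(|\xi|)\subset\omega(|\bomega(\xi)|)$. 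Finally (W), applied to the forward invariant set $\bomega(\xi)\in\IS^+(\cF)$, shows $|\bomega(\xi)|\in\sABlockR(\varphi)$, so $\omega(|\bomega(\xi)|)\subset\Int|\bomega(\xi)|\subset|\bomega(\xi)|$, and (L) follows.

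The main obstacle I anticipate is the passage from the pointwise condition (L) to the set-level statement $\omega(|\cU|)\subset|\bomega(\cU)|$ in the $(\Leftarrow)$ direction; this rests on finiteness of $\cU$ together with the additivity of $\omega$ over finite unions of compact sets, a fact that must be explicitly noted (and which is the reason the combinatorial setup is essential). The $(\Rightarrow)$ direction is more delicate in flavor, since the content of (L) is tested by applying commutativity not to an arbitrary forward invariant set but to the very specific set $\Gamma^+(\xi)$; identifying this as the correct test set, together with the observation $\bomega(\Gamma^+(\xi))=\bomega(\xi)$, is the key idea making the argument work.
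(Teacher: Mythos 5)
Your proposal is correct and follows essentially the same route as the paper's proof: both directions hinge on the same two ideas, namely testing right-square commutativity on $\cU=\Gamma^+(\xi)$ with $\bomega(\Gamma^+(\xi))=\bomega(\xi)$ to extract (L), and passing from the pointwise condition (L) to $\omega(|\cU|)\subset|\bomega(\cU)|$ via additivity of $\omega$ over the finite union $|\cU|=\bigcup_{\xi\in\cU}|\xi|$, then applying $\omega$ once more. The only cosmetic difference is that you justify $\omega(|\bomega(\xi)|)\subset|\bomega(\xi)|$ explicitly through (W), which the paper leaves implicit.
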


\begin{proof}
If the second square in \eqref{model212} commutes, then $\omega(|\cU|) = c(\bomega(\cU))$ for every $\cU\in \IS^+(\cF)$. In particular, each $\cA\in\sAtt(\cF)$ satisfies $\cA=\bomega(\cA)$ and is an element of $\IS^+(\cF)$ so that
$\omega(|\cA|) = c(\cA)$, i.e. $c = \omega(|\cdot|)$.

Let $\cU = \Gamma^+(\xi)$ be the complete forward image of some $\xi\in \cX$. Then, 
\[
\omega(|\xi|) \subset \omega(|\cU|) = c(\bomega(\cU)) = \omega(|\bomega(\cU)|)\subset |\bomega(\cU)|
= |\bomega\bigl( \Gamma^+(\xi)\bigr)| = |\bomega(\xi)|,
\]
which establishes property (L).

Conversely, suppose (L) is satisfied.
For $\cU\in \IS^+(\cF)$ we have that $\bomega(\cU) \subset \cU$ and therefore
$
\omega(|\bomega(\cU)|) \subset \omega(|\cU|).
$
Moreover, since $\omega$, $\bomega$, and $|\cdot |$ are homomorphisms, 
\[
\omega(|\cU|) = \bigcup_{\xi\in \cU} \omega(|\xi)|)
\subset \bigcup_{\xi\in \cU} |\bomega(\xi)| = |\bomega(\cU)|,
\]
and thus $\omega(|\cU|) = \omega\bigl(\omega(|\cU|) \bigr) \subset \omega(|\bomega(\cU|).$ Combining both inclusions, $\omega(|\bomega(\cU)|)= \omega(|\cU|)$.
This establishes the commutativity of the second square in \eqref{model212} when $c=\omega(|\cdot|).$
\end{proof}

From this point on we assume that $|\cdot|$ is injective.
If we consider the diagram in \eqref{model212} by denoting the ranges of the bottom span we obtain
\begin{equation}
\label{modelfinal1}
\begin{diagram}
\node{\scrR_0}\node{\sN} \arrow{w,l,V}{\supset}\arrow{e,l,A}{\omega} \node{\sA} \\
\node{\sSet(\cX)}\arrow{n,l,<>}{|\cdot |} \node{\IS^+(\cF)}\arrow{n,r,<>}{|\cdot |}\arrow{w,l,V}{\supset}\arrow{e,l,A}{\bomega} \node{\sAtt(\cF),}\arrow{n,l,A}{\omega(|\cdot|)}
\end{diagram}
\end{equation}
where $\scrR_0$ is the algebra of grid elements, $\sN$ is a finite lattice of attracting blocks, and 
$\sA$ is a finite lattice of attractors.
We now invoke the various Conley forms to dualize the above diagrams which yields the following dual diagram
\begin{equation}
\label{modelfinal2}
\begin{diagram}
\node{|\cX|}\arrow{e,l,A}{}\node{\sT(\sN)}  \node{\sM(\sA)}\arrow{w,l,L}{\pi}\arrow{s,l,J}{} \\
\node{\cX}\arrow{e,l,A}{\supset}\arrow{n,l,<>}{|\cdot |} \node{\sSCC(\cF)}\arrow{n,l,<>}{|\cdot |} \node{\sRC(\cF),}\arrow{w,l,L}{}
\end{diagram}
\end{equation}
which provides a factorization of the tessellation $|\cX|\twoheadrightarrow \sT(\sN)$ and
the tessellated Morse decomposition $\sM(\sA) \hookrightarrow \sT(\sN)$. Together these define the co-span
\[
|\cX|\twoheadrightarrow \sT(\sN) \hookleftarrow \sM(\sA).
\]
The posets $\sSCC(\cF)$ and $\sRC(\cF)$ are the spectral representations of
$\IS^+(\cF)$ and $\sAtt(\cF)$ respectively, cf.\ Section~\ref{subsec:repr1} and
\cite{KMV-1b,KastiKV}.
The dual diagram shows that binary relations $\cF$ that satisfy Hypotheses (W) and (L) give rise to tessellated Morse decompositions. This fact has been used to computationally characterize and compare global dynamics
in various contexts, \cite{database,KMV-0,KMV-1b,KastiKV,Lyap,DSGRN0,DSGRN1,database2,DK,BK,BKM}.

Given a tessellation $\sT$ of $X$ consisting of regular closed sets labeled by $\cX$.
If we choose $\cF$ to be transitive and reflexive, then $\cF$ is a partial order on $\cX$ and induces a partial order on $\sT = |\cX|$.  By Theorem \ref{critWL} we can then formulate the following equivalent characterization of Morse tessellations in the spirit Theorem \ref{thm:mainMDthm} for Morse representations.

\begin{corollary}
\label{MTchar1}
A (finite) poset $(\sT,\le)$ consisting of a regular closed partition $\sT$ of $X$ is a Morse tessellation
subordinate to a (finite) sublattice of attracting blocks if and only if
the partial order is a weak outer approximation for $\varphi$.
\end{corollary}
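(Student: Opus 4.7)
The plan is to apply Theorem \ref{critWL} and observe that property (L) is automatic when $\cF$ is a partial order. Since $\cF$ is transitive and reflexive, $\cF^n=\cF$ for every $n\ge 1$, so each forward invariant $\cU\in\IS^+(\cF)$ satisfies $\bomega(\cU)=\cU$; in particular $\bomega(\xi):=\bomega(\Gamma^+(\xi))=\Gamma^+(\xi)$. Property (L) then reduces to $\omega(|\xi|)\subset|\Gamma^+(\xi)|$, which is implied by (W): the inclusion $\varphi(t,|\xi|)\subset\Int|\Gamma^+(\xi)|$ for $t>0$ passes to $\omega$-limits since $|\Gamma^+(\xi)|$, as a finite union of regular closed tiles, is closed. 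By Theorem \ref{critWL}, the commutative combinatorial model \eqref{model212} with $\cF$ in the bottom row therefore exists if and only if $\cF$ satisfies (W).

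For the ``if'' direction, assume $\cF$ satisfies (W) and define $\sN:=\{|\cU|:\cU\in\IS^+(\cF)\}\subset\scrR(X)$. Each $|\cU|$ is regular closed because $\sT$ is a regular closed partition, and it is an attracting block because
\[
\varphi(t,|\cU|)=\bigcup_{\xi\in\cU}\varphi(t,|\xi|)\subset\bigcup_{\xi\in\cU}\Int|\Gamma^+(\xi)|\subset\Int|\cU|
\]
using (W) and the forward invariance of $\cU$ (which gives $\Gamma^+(\xi)\subset\cU$ for $\xi\in\cU$). The map $|\cdot|\colon\IS^+(\cF)\to\sN$ is then a lattice isomorphism, and the join-irreducibles of $\IS^+(\cF)$ are the principal forward invariant sets $\Gamma^+(\xi)$ with immediate predecessor $\Gamma^+(\xi)\smallsetminus\{\xi\}$ (forward invariant by transitivity and antisymmetry). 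The corresponding tile is $|\Gamma^+(\xi)|-|\Gamma^+(\xi)\smallsetminus\{\xi\}|=|\xi|$, so $\sT(\sN)=\sT$ with the same partial order, i.e.\ $(\sT,\le)$ is a Morse tessellation subordinate to $\sN$.

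For the ``only if'' direction, assume $(\sT,\le)$ is a Morse tessellation subordinate to some $\sN\subset\sABlockR(\varphi)$, so each $\xi\in\cX$ labels a tile $|\xi|=N_\xi-\pred N_\xi$ for $N_\xi\in\sJ(\sN)$, and the partial order on $\cX$ matches the inclusion order on $\{N_\xi\}$. Consequently $|\Gamma^+(\xi)|=\bigcup_{N_\eta\subset N_\xi}(N_\eta-\pred N_\eta)=N_\xi$, and since $N_\xi$ is a regular closed attracting block,
\[
\varphi(t,|\xi|)\subset\varphi(t,N_\xi)\subset\Int N_\xi=\Int|\Gamma^+(\xi)|,\quad t>0,
\]
which is precisely (W).

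The main obstacle is the careful bookkeeping of the parallel structures---the labels $\cX$ with the relation $\cF$, the join-irreducibles $\sJ(\sN)$ with their inclusion order, the tiles of $\sT$, and the attracting blocks in $\sN$---so that the key identification $|\Gamma^+(\xi)|=N_\xi$ holds. This identification is what converts the attracting block property of $N_\xi$ (applied to the subset $|\xi|=N_\xi-\pred N_\xi$) into property (W) for the combinatorial relation $\cF$, thereby making Theorem \ref{critWL} applicable.
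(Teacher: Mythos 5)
Your proof is correct and follows essentially the same route as the paper: observe that for a partial order $\bomega(\xi)=\Gamma^+(\xi)$ (the paper writes $\down\xi$), so (W) implies (L) and Theorem \ref{critWL} applies, then use the correspondence between forward-invariant sets of $\cF$ and sublattices of regular closed attracting blocks, with join-irreducibles yielding the tiles. You merely spell out in both directions the construction that the paper compresses into the phrase ``the remainder follows \ldots by construction.''
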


\begin{proof}
Let $\cF$ be the partial order induced on $\cX$ by the poset $\sT$. Assume that $\cF$ is a weak outer approximation.
Since $\cF$ is a partial order, $\bomega(\xi) = \down \xi$ for every $\xi\in \cX$. Then (W) implies $\omega(|\xi|)\subset |\down\xi|=|\bomega(\xi)|$ so that (L) is satisfied.
The remainder follows from Theorem \ref{critWL}
and fact that finite sublattices of $\sABlockR(\varphi)$ yield partially ordered partitions of regular closed sets for which the down-sets give attracting blocks by construction.
\end{proof}

We refer to \cite{KastiKV} for a more detailed account on combinatorial models and applications.

\bibliographystyle{plain}
\bibliography{KMVc-biblist}

\appendix

\section{Dynamics}
\label{sec:dynamics}

In this section we recall definitions from dynamical systems theory, from both continuous and discrete time dynamical systems as well as the dynamics of finite relations. 
\subsection{Topological Dynamics}
\label{sec:topdyn}
In  this paper we use the following definition of a continuous dynamical system, cf.\ \cite{KMV-1a,KMV-1b}.
\begin{definition}
\label{defn:local-dyn}
Let $\T$ denote either $\Z$ or $\R$.
A \emph{dynamical system} is a continuous map $\varphi\colon \T^+ \times X \to X$ that satisfies
\begin{enumerate}
\item [(i)] $\varphi(0,x) = x$ for all $x\in X$, and
\item [(ii)] for all $s,t\in \T^+$ and for all $x\in X$ it holds that 
$\varphi(t,\varphi(s,x)) = \varphi(t+s,x)$.
\end{enumerate}
If $\varphi\colon \T \times X \to X$ satisfies (i) and (ii), then $\varphi$ is called an \emph{invertible} dynamical system.
\end{definition}
Recall that $A\subset X$ is an \emph{attractor} for $\varphi$ if there exists an open neighborhood $U$ of $A$ such that $\omega(U)=A$, and dually, 
$R\subset X$ is a \emph{repeller} for $\varphi$ if there exists an open neighborhood $U$ of $R$ such that $\alpha(U)=R$.
The bounded, distributive lattice of attractors and repellers is denoted by $\sAtt(\varphi)$ and $\sRep(\varphi)$, respectively.
The binary relations on $\sAtt(\varphi)$ are $A\wedge A':= \omega(A\cap A')$ and $A\vee A' := A\cup A'$, and on $\sRep(\varphi)$ the binary relations are intersection and union. In \cite{KMV-1a}, it is shown that there is a natural well-defined duality anti-isomorphism $^*\colon\sAtt(\varphi)\to\sRep(\varphi)$ via $A=\omega(U) \mapsto \alpha(U^c)=A^*$. The pair $(A,A^*)$ is called an {\em attractor-repeller pair}.

A set $U\subset X$ is an \emph{attracting neighborhood}  if $\omega(\cl U)\subset \Int U$  and  a \emph{repelling neighborhood}   if $\alpha(\cl U)\subset \Int U$. The collection of all attracting and repelling neighborhoods form bounded distributive lattices, $\sANbhd(\varphi)$ and $\sRNbhd(\varphi)$, respectively,
with binary operations intersection and union.
As shown in \cite{KMV-1a}, the map $\omega \colon \sANbhd(\varphi)\twoheadrightarrow \sAtt(\varphi)$ is a surjective lattice homomorphism.
Similarly, $\alpha\colon \sRNbhd(\varphi) \twoheadrightarrow \sRep(\varphi)$ is a surjective lattice homomorphism. 
A subset $U\subset X$ is an \emph{attracting block} for $\varphi$ if
\[
\varphi(t,\cl U) \subset \Int U\quad \forall t>0.
\]
The set of closed attracting blocks of $\varphi$ is denoted by $\sABlockC(\varphi)$.
By \cite[Lemma~3.3]{KMV-1a} and the fact that intersection and union of closed sets are closed, $\sABlockC(\varphi)$ is a bounded distributive lattice. 
Since the inclusion $\sABlockC(\varphi) \rightarrowtail \sSet(X)$ is a lattice homomorphism, we can define the dual lattice
\[
\sRBlockO(\varphi)  := \setof{U^c \mid U\in \sABlockC(\varphi)}.
\]
Furthermore, by \cite[Lemma~3.17]{KMV-1a}, we have that $\sRBlockO(\varphi)\rightarrowtail \sSet(X)$, and complement acts as an anti-lattice isomorphism between $\sRBlockO(\varphi)$ and $\sABlockC(\varphi)$.
From the  perspective of dynamics,  $V\in \sRBlockO(\varphi)$ if and only if $\varphi(t,\cl V) \subset \Int V$ for all  $t<0$.

\begin{lemma}
\label{lem:omegaABlock}
Let $\varphi\colon\T^+\times X \to X$ be a dynamical system on a compact metric space.
Then, $\sAtt(\varphi)= \omega(\sABlockC(\varphi))$ and $\sRep(\varphi) = \alpha(\sRBlockO(\varphi))$.
\end{lemma}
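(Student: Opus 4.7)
The plan is to establish $\sAtt(\varphi) = \omega(\sABlockC(\varphi))$ by proving both inclusions directly, and then derive the repeller equality by duality.

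For the inclusion $\omega(\sABlockC(\varphi)) \subseteq \sAtt(\varphi)$, first observe that a closed attracting block $U$ is automatically an attracting neighborhood: the defining property $\varphi(t, U) \subseteq \Int U$ for all $t > 0$ yields $\omega(U) = \omega(\cl U) \subseteq \Int U$. Since the map $\omega\colon \sANbhd(\varphi) \twoheadrightarrow \sAtt(\varphi)$ is a surjective lattice homomorphism (a result from \cite{KMV-1a} recalled in Section~\ref{sec:topdyn}), $\omega(U) \in \sAtt(\varphi)$.

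For the reverse inclusion $\sAtt(\varphi) \subseteq \omega(\sABlockC(\varphi))$, I would invoke the existence of a continuous Lyapunov function $L\colon X \to [0,1]$ for the attractor-repeller pair $(A, A^*)$ from \cite{KMV-1a}, satisfying $L^{-1}(0) = A$, $L^{-1}(1) = A^*$, and $L(\varphi(t,x)) < L(x)$ for every $x \notin A \cup A^*$ and $t > 0$. Choose $c \in (0,1)$ and set $U := L^{-1}([0,c])$. This $U$ is closed by continuity of $L$, contains $A$, and lies in the basin of attraction since $U \subseteq L^{-1}([0,1)) = X \smin A^*$. For $x \in U$ and $t > 0$, either $x \in A$ and $\varphi(t,x) \in A \subseteq L^{-1}(0)$, or $L(\varphi(t,x)) < L(x) \leq c$; in both cases $\varphi(t,x) \in L^{-1}([0,c))$. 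Consequently $\varphi(t, U) \subseteq L^{-1}([0,c)) \subseteq \Int U$, so $U \in \sABlockC(\varphi)$. The two-sided inclusion $A \subseteq U \subseteq$ basin of $A$ then forces $\omega(U) = A$.

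For the repeller equality $\sRep(\varphi) = \alpha(\sRBlockO(\varphi))$, I would use the anti-isomorphism $^c\colon \sABlockC(\varphi) \leftrightarrow \sRBlockO(\varphi)$ together with the duality $^*\colon \sAtt(\varphi) \leftrightarrow \sRep(\varphi)$ from \cite{KMV-1a}. The identity $\alpha(U^c) = \omega(U)^*$ transports the first equality to the second: given $R \in \sRep(\varphi)$, write $R = A^*$ with $A = \omega(U)$ for some $U \in \sABlockC(\varphi)$, then $R = \alpha(U^c)$ with $U^c \in \sRBlockO(\varphi)$.

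The main obstacle is the construction in the second paragraph: a naive approach of taking $\cl\bigl(\bigcup_{t \geq T} \varphi(t, \cl N)\bigr)$ for an attracting neighborhood $N$ produces a closed forward-invariant set contained in $\Int N$, but establishing the strict interior condition $\varphi(t,\cdot) \subseteq \Int U$ required to be a closed attracting block is delicate because continuity of $\varphi(t,\cdot)$ alone does not produce open-mapping behavior. The Lyapunov function bypasses this difficulty via strict monotonicity along orbits and uniformity through compactness of sublevel sets.
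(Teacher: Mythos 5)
Your proof is correct and follows essentially the same route as the paper: both obtain a closed attracting block for a given attractor $A$ as a sublevel set of the Lyapunov function for the pair $(A,A^*)$, with the easy inclusion $\omega(\sABlockC(\varphi))\subseteq\sAtt(\varphi)$ coming from $\sABlockC(\varphi)\subseteq\sANbhd(\varphi)$. The only cosmetic differences are that the paper first shrinks the sublevel set into a trapping region $\widehat U$ inside a prescribed attracting neighborhood to conclude $\omega(N)=\omega(\widehat U)=A$, whereas you conclude $\omega(U)=A$ from the sandwich $A\subseteq U\subseteq W^s(A)$ (which is exactly Lemma~\ref{aux}), and you handle the repeller statement by duality via $\alpha(U^c)=\omega(U)^*$ where the paper simply says the argument is similar.
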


\begin{proof}
Let $A\in \sAtt(\varphi)$.
Because $\sAtt(\varphi):= \omega(\sANbhd(\varphi))$, there exist  $U \in \sANbhd(\varphi)$ such that $\omega(U) = A$.
By \cite[Prop.\ 3.5]{KMV-1a} there exists a trapping region $\widehat U\subset U$ such that $\omega(U) = \omega(\widehat U) = A$.
By \cite[Lemmas 6.5 and 7.7]{KMV-0} there exists a Lyapunov function $V\colon X\to [0,1]$ such that $V^{-1}(0) = A$, $V^{-1}(1) = A^*$, and $V(\varphi(t,x))< V(x)$ for all $t>0$ and $x\not \in A\cup A^*$, where $A^*$ is the dual repeller to $A$. 
Due to compactness we can choose $0<\epsilon\ll 1$ such that $N =\setof{x\in X \mid V(x)\le \epsilon}\subset \widehat U\subset U$ is a closed attracting block with $\omega(N) = \omega(\widehat U) = \omega(U) = A$.
Therefore, $A\in \omega(\sABlockC(\varphi))$.
The proof that $\sRep(\varphi) = \alpha(\sRBlockO(\varphi))$ is similar.
\end{proof}

The following result is a Corollary of \cite[Proposition 3.16]{KMV-1a}.

\begin{lemma}
\label{lem:alphaOmegaDiagram}
Let $\varphi\colon\T^+\times X \to X$ be a dynamical system on a compact metric space.
Then, the following diagram commutes:
\begin{equation}\label{diag:AR22}
\begin{diagram}
\node{\sABlockC(\varphi)} \arrow{e,lr}{^c}{\cong} \arrow{s,l,A}{ \omega}\node{\sRBlockO(\varphi)} \arrow{s,r,A}{ \alpha}\\
\node{\sAtt(\varphi)} \arrow{e,lr}{^*}{\cong}   \node{\sRep(\varphi)} 
\end{diagram}
\end{equation}
\end{lemma}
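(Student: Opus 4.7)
The plan proceeds by assembling three independent ingredients. First, I would verify the horizontal maps. The bottom map $^*\colon\sAtt(\varphi)\to\sRep(\varphi)$ was identified in the paragraph preceding the lemma as the duality anti-isomorphism $A=\omega(U)\mapsto\alpha(U^c)=A^*$, so there is nothing further to check. The top map $^c\colon\sABlockC(\varphi)\to\sRBlockO(\varphi)$ is a bijection by the very definition $\sRBlockO(\varphi):=\{U^c\mid U\in\sABlockC(\varphi)\}$, and \cite[Lemma~3.17]{KMV-1a} (invoked in the text) gives that complement intertwines the lattice structures.

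Second, I would check that the vertical maps are well-defined surjective lattice homomorphisms. For any $U\in\sABlockC(\varphi)$ the hypothesis $\varphi(t,\cl U)\subset\Int U$ for $t>0$ implies $\omega(\cl U)\subset\Int U$, so $\sABlockC(\varphi)\subset\sANbhd(\varphi)$; dually $\sRBlockO(\varphi)\subset\sRNbhd(\varphi)$. Hence $\omega$ and $\alpha$ in the diagram are simply restrictions of the surjective lattice homomorphisms $\omega\colon\sANbhd(\varphi)\twoheadrightarrow\sAtt(\varphi)$ and $\alpha\colon\sRNbhd(\varphi)\twoheadrightarrow\sRep(\varphi)$ recorded earlier in the section. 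The homomorphism property is automatic under restriction, and surjectivity is exactly the content of Lemma~\ref{lem:omegaABlock}.

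The third and only substantive step is commutativity: for each $U\in\sABlockC(\varphi)$, setting $A=\omega(U)$, I need $A^* = \alpha(U^c)$. This is precisely the characterization of the dual repeller established in Proposition~3.16 of \cite{KMV-1a} for any attracting neighborhood $U$. Because the inclusion $\sABlockC(\varphi)\subset\sANbhd(\varphi)$ was established in the previous step, that proposition applies verbatim to closed attracting blocks.

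The main ``obstacle'' here is only bookkeeping — aligning the definitions of complement on $\sABlockC(\varphi)$ and of the dual on $\sAtt(\varphi)$ with the formula provided by \cite[Proposition~3.16]{KMV-1a}. There is no new dynamical content; as the lemma is stated, it is genuinely a corollary of that earlier proposition together with the observation that closed attracting blocks form a sublattice of attracting neighborhoods on which $\omega$ remains surjective.
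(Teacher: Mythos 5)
Your proposal is correct and follows essentially the same route as the paper, which simply records the lemma as a corollary of \cite[Proposition~3.16]{KMV-1a} (for the commutativity $A^*=\alpha(U^c)$) together with \cite[Lemma~3.17]{KMV-1a} for the top map and Lemma~\ref{lem:omegaABlock} for surjectivity of the vertical arrows. Your version merely spells out the bookkeeping (the inclusion $\sABlockC(\varphi)\subset\sANbhd(\varphi)$ and the restriction of the homomorphisms) that the paper leaves implicit.
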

The upper homomorphism follows from the proof of Lemma~3.17 in \cite{KMV-1a}.

\begin{definition}
\label{standunst}
For a compact 
invariant set $S\subset X$    define the sets
\[
\begin{aligned}
W^s(S) &=\{x\in X~|~ \omega(x) \subset S\};\\
W^u(S) &= \{x\in X~|~\exists \gamma_x^- ~\ni~ \alphaOg \subset S\},
\end{aligned}
\]
which are called the \emph{stable} and \emph{unstable} sets of $S$ respectively.
\end{definition}

\begin{lemma}
\label{stunstinv}
The stable set $W^s(S)$ is forward-backward invariant, and the unstable set $W^u(S)$ is invariant.
\end{lemma}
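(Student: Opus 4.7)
The plan is to build everything on two elementary shift properties of limit sets along orbits. First, for each forward orbit one has $\omega(\varphi(t,x))=\omega(x)$ for every $t\ge 0$, because the two forward tails $\{\varphi(s,\varphi(t,x))\}_{s\ge 0}$ and $\{\varphi(s,x)\}_{s\ge t}$ differ only by a finite initial piece. Second, if $\gamma_x^-$ is a backward orbit through $x$ and $y=\gamma_x^-(t_0)$ for some $t_0\le 0$, then $\varphi(-t_0,y)=x$ and hence $\varphi(s,y)=\varphi(s+t_0,x)$ for all $s\ge -t_0$, so once again $\omega(y)=\omega(x)$. In the same spirit, the alpha-limit $\alphaO(\gamma)$ of any backward orbit is unaffected by shifting its parameterization or by splicing a finite forward piece onto its right-hand end, since $\alphaO$ depends only on the tail $s\to-\infty$.

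For the stable set, forward invariance is immediate from the first observation: if $x\in W^s(S)$ and $t\ge 0$, then $\omega(\varphi(t,x))=\omega(x)\subset S$, so $\varphi(t,x)\in W^s(S)$. For backward invariance, fix any backward orbit $\gamma_x^-$ starting at $x\in W^s(S)$ and any $t_0\le 0$; by the second observation $\omega(\gamma_x^-(t_0))=\omega(x)\subset S$, so $\gamma_x^-(t_0)\in W^s(S)$.

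For the unstable set, let $x\in W^u(S)$ with witnessing backward orbit $\gamma_x^-$ satisfying $\alphaOg\subset S$. For backward invariance, given $t_0\le 0$ set $y=\gamma_x^-(t_0)$ and define $\gamma_y^-(s):=\gamma_x^-(s+t_0)$ for $s\le 0$; this is a backward orbit of $y$ whose image lies inside that of $\gamma_x^-$, and in particular $\alphaO(\gamma_y^-)=\alphaOg\subset S$. For forward invariance, given $t_0\ge 0$ set $y=\varphi(t_0,x)$ and splice by defining $\gamma_y^-(s):=\varphi(s+t_0,x)$ for $-t_0\le s\le 0$ and $\gamma_y^-(s):=\gamma_x^-(s+t_0)$ for $s\le -t_0$. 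Continuity at $s=-t_0$ holds because $\varphi(0,x)=x=\gamma_x^-(0)$, and $\alphaO(\gamma_y^-)=\alphaOg\subset S$ since the tail at $-\infty$ is unchanged. Concatenating $\gamma_x^-$ with the forward orbit $t\mapsto\varphi(t,x)$ then produces a complete orbit through $x$ lying entirely in $W^u(S)$, yielding the invariance claimed.

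The only nontrivial verification is the cocycle identity $\varphi(s-r,\gamma_y^-(r))=\gamma_y^-(s)$ for $s\le r\le 0$ at the splice point $s=-t_0$ in the forward-invariance argument; this reduces directly to the semigroup property of $\varphi$. Nothing in either argument exploits invertibility, so the proof goes through uniformly in both the invertible and non-invertible settings: at every other point of $W^u(S)$ the required backward orbit is produced by shifting or splicing the single backward orbit whose existence is guaranteed at $x$.
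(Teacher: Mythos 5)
Your proof is correct and follows essentially the same route as the paper's: the stable-set claim rests on the observation that forward limit sets are unchanged (the paper only needs $\subset$) under shifting along an orbit, and the unstable-set claim follows by shifting and splicing the witnessing backward orbit into a complete orbit through $x$, every point of which then inherits a backward orbit with the same alpha-limit. Your version merely spells out the splicing and the cocycle check that the paper leaves implicit.
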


\begin{proof}
Let $x\in W^s(S)$. 
Then $\omega(\varphi(t,x)) \subset \omega(x)\subset S$ for every $t\in \T$. 
Therefore, $\omega(\varphi(t,x)) \subset S$
for all $x\in W^s(S), t\in\T$,
which proves that $W^s(S)$ is both forward and backward invariant.

As for $W^u(S)$, we argue as follows. Let $x\in W^u(S)$. Then a complete orbit $\gamma_x$ exists. Therefore, every $y\in \gamma_x$ has a backward orbit $\gamma_y^-$, and $\alpha_{\rm o}(\gamma_y^-) = \alphaOg \subset S$,
which proves that $\gamma_x\subset W^u(S)$ for all $x\in W^u(S)$ and establishes the invariance of $W^u(S)$.
\end{proof}

\begin{lemma}
\label{lem:stlatthom}
Let $S,S'$ be compact invariant sets. Then, $W^s(S\wedge S') = W^s(S) \cap W^s(S')$.
\end{lemma}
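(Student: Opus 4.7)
The plan is to establish the equality by proving both inclusions separately, using only monotonicity of $\omega$ under set inclusion and the characterization of meet in $\sInvset(\varphi)$ as the greatest invariant set contained in the intersection.

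For the inclusion $W^s(S\wedge S')\subseteq W^s(S)\cap W^s(S')$, I would observe that in any lattice the meet $S\wedge S'$ is below each factor, so $S\wedge S'\subseteq S$ and $S\wedge S'\subseteq S'$ (the order on $\sInvset(\varphi)$ being inclusion). Hence if $\omega(x)\subseteq S\wedge S'$, then $\omega(x)\subseteq S$ and $\omega(x)\subseteq S'$, which gives $x\in W^s(S)\cap W^s(S')$.

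For the reverse inclusion, suppose $x\in W^s(S)\cap W^s(S')$, so that $\omega(x)\subseteq S$ and $\omega(x)\subseteq S'$, and thus $\omega(x)\subseteq S\cap S'$. The key point is that the omega-limit set $\omega(x)$ is itself a compact invariant set in the compact metric setting, hence an element of $\sInvset(\varphi)$. Therefore $\omega(x)$ is a common lower bound of $S$ and $S'$ in $\sInvset(\varphi)$, and by the greatest-lower-bound property of the meet, $\omega(x)\subseteq S\wedge S'$. Consequently $x\in W^s(S\wedge S')$.

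The only subtlety is the identification of the meet operation in $\sInvset(\varphi)$. In the invertible case (cf.\ Example~\ref{ex:X3}) meet coincides with intersection so both inclusions reduce to the tautology $\omega(x)\subseteq S\cap S'\Leftrightarrow\omega(x)\subseteq S\text{ and }\omega(x)\subseteq S'$. In the noninvertible case $\sInvset(\varphi)$ is not a sublattice of $\sSet(X)$ and $S\wedge S'$ is the maximal invariant set sitting inside $S\cap S'$; the reverse inclusion then relies precisely on $\omega(x)$ being invariant and contained in $S\cap S'$, so that it is trapped inside that maximal invariant subset.
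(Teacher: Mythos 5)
Your proof is correct and follows essentially the same route as the paper: the forward inclusion is immediate from $S\wedge S'\subseteq S, S'$, and the reverse inclusion uses exactly the paper's key observation that $\omega(x)$ is itself a compact invariant set contained in $S\cap S'$, hence lands in $S\wedge S'=\omega(S\cap S')=\Inv(S\cap S')$. The paper phrases this as the computation $\omega(x)=\omega(\omega(x))\subseteq\omega(S\cap S')=S\wedge S'$ rather than invoking the greatest-lower-bound property, but the content is identical.
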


\begin{proof}
We have $W^s(S\wedge S')\subset W^s(S) \cap W^s(S')$. Now 
let $x\in W^s(S) \cap W^s(S')$. Then $\omega(x)\subset S\cap S'$, and since $\omega(x)$ is invariant, $\omega(x) = \omega(\omega(x)) \subset \omega(S\cap S') = S\wedge S'$,
and thus $W^s(S) \cap W^s(S') \subset W^s(S\wedge S')$.
\end{proof}

\begin{remark}
\label{stunsth}
The same property with respect to union is not clear unless the invariant sets are attractors. 
The equivalent of Lemma \ref{lem:stlatthom} does not hold for $W^u(S\wedge S')$.
If $\varphi$ is an invertible system, i.e. a dynamical system
with time $t\in \Z$, or $t\in \R$, then 
then we can use the proof of Lemma \ref{lem:stlatthom} to show that
both $W^s$ and $W^u$ define lattice homomorphisms from the sublattice of compact invariant sets
to the invariant sets.
\end{remark}

\begin{lemma}
\label{lem:stlatthom0}
Let $A\in \sAtt(\varphi)$. Then,
the application  $W^s\colon \sAtt(\varphi) \to \IS^\pm(\varphi)$, defined by $A\mapsto W^s(A)$, is a lattice embedding.
\end{lemma}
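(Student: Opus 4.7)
The plan is to reduce the lemma to the single identity $W^s(A) = X\smin A^*$, valid for every $A\in\sAtt(\varphi)$, and then derive the homomorphism and injectivity properties from this identity using the attractor--repeller duality already established. The first observation is routine: by Lemma \ref{stunstinv}, $W^s(A)\in\IS^\pm(\varphi)$ since every attractor is a compact invariant set.

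The main obstacle will be establishing the identity $W^s(A) = X\smin A^*$, i.e.\ that the basin of attraction coincides with the complement of the dual repeller. The characterization $A^* = \{x\in X : \omega(x)\cap A = \varnothing\}$ from \cite[Prop.\ 3.16]{KMV-1a}, invoked in the proof of Lemma \ref{lem:MDrec2}, makes the inclusion $W^s(A)\subset X\smin A^*$ trivial, because $\omega(x)\subset A$ together with $\omega(x)\neq\varnothing$ (by compactness of $X$) force $x\notin A^*$. For the opposite inclusion I would proceed as follows: given $x\in X$ with $\omega(x)\cap A\neq\varnothing$, invoke Lemma \ref{lem:omegaABlock} to pick a closed attracting block $U$ with $A\subset \Int U$ and $\omega(U) = A$; choose $p\in\omega(x)\cap A\subset\Int U$ and a sequence $t_n\to\infty$ with $\varphi(t_n,x)\to p$, so that eventually $\varphi(t_n,x)\in U$; then the attracting block property $\varphi(t,\cl U)\subset \Int U$ for $t>0$ traps the entire forward tail of the orbit of $x$ in $U$, and since $\omega(U)=A$ this forces $\omega(x)\subset A$.

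Once the identity $W^s(A) = X\smin A^*$ is in hand, both lattice operations reduce to Boolean complement arithmetic. Using the anti-isomorphism $^*\colon\sAtt(\varphi)\to\sRep(\varphi)$ from Lemma \ref{lem:alphaOmegaDiagram} together with the fact that $\sRep(\varphi)$ carries set-theoretic intersection and union as its lattice operations, I compute
\begin{align*}
W^s(A\wedge A') &= X\smin (A\wedge A')^* = X\smin (A^*\cup A'^*) = W^s(A)\cap W^s(A'), \\
W^s(A\vee A') &= X\smin (A\vee A')^* = X\smin (A^*\cap A'^*) = W^s(A)\cup W^s(A').
\end{align*}
Injectivity is also immediate: $W^s(A) = W^s(A')$ forces $A^* = A'^*$, which implies $A = A'$ by the anti-isomorphism. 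Taken together, these items establish that $W^s$ is an injective lattice homomorphism into $\IS^\pm(\varphi)$, i.e.\ a lattice embedding.
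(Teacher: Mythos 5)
Your proof is correct and follows essentially the same route as the paper: both rest on the identity $W^s(A)=(A^*)^c$ and then obtain the homomorphism and injectivity properties from De Morgan's laws together with the fact that $^*$ and $^c$ are (anti-)involutions. The only difference is that the paper simply cites this identity from \cite[Theorem 3.19]{KMV-1a}, whereas you re-derive it from the characterization $A^*=\setof{x\in X\mid \omega(x)\cap A=\varnothing}$ via an attracting-block argument; that derivation is sound (and correctly notes the need for $\omega(x)\neq\varnothing$), but it is extra work the paper avoids.
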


\begin{proof}
By Theorem 3.19 in \cite{KMV-1a} we have that $W^s(A) = \{x\in X~|~\omega(x)
\subset A\} = (A^*)^c$. This implies
\[
\begin{aligned}
W^s(A) \cup W^s(A') & = (A^*)^c \cup (A'^*)^c
 = (A^*\cap A'^*)^c 
= \bigl((A\cup A')^*\bigr)^c
= W^s(A\cup A').
\end{aligned}
\]
Similarly,
\[
\begin{aligned}
W^s(A) \cap W^s(A') & = (A^*)^c \cap (A'^*)^c 
= (A^*\cup A'^*)^c 
= \bigl((A\wedge A')^*\bigr)^c
= W^s(A\wedge A').
\end{aligned}
\]
To prove that the homomorphism is injective we argue as follows. 
Suppose $W^s(A) = W^s(A')$, then equivalently $(A^*)^c = (A'^*)^c$. Since both $^*$ and $^c$ are involutions, we have that $A=A'$,
which completes the proof.
\end{proof}

The following lemma is an extension of \cite[Prop.\ 3.21]{KMV-1a}.
\begin{lemma}
\label{aux}
Let $A\subset X$ be an attractor and let $N$ be a compact set satisfying
$A\subset N \subset W^s(A)$. Then, $\omega(N) = A$.
\end{lemma}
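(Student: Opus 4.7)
The plan is to leverage the existence of an attracting block $U$ with $\omega(U)=A$ (from Lemma~\ref{lem:omegaABlock}) and reduce the statement to the already-established identity $\omega(U)=A$ by absorbing the forward orbit of $N$ into $U$ in finite time. Since $A$ is an attractor, pick a closed attracting block $U\in\sABlockC(\varphi)$ with $\omega(U)=A$; by choosing $U$ sufficiently close to $A$ (for example using a Lyapunov function as in the proof of Lemma~\ref{lem:omegaABlock}) we may assume $A\subset \Int U$.

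The main step is to produce $T\ge 0$ such that $\varphi(T,N)\subset \Int U$. For each $x\in N\subset W^s(A)$ we have $\omega(x)\subset A\subset \Int U$, so there exists $t_x>0$ with $\varphi(t_x,x)\in \Int U$. By continuity of $\varphi(t_x,\cdot)$ there is an open neighborhood $V_x$ of $x$ with $\varphi(t_x,V_x)\subset \Int U$. The attracting block property $\varphi(s,\cl U)\subset \Int U$ for $s>0$ then implies that $\varphi(s,V_x)\subset \Int U$ for every $s\ge t_x$; this is the key use of $U$ being an attracting block, as it guarantees that trajectories do not leave $\Int U$ once they enter. Since $N$ is compact, extract a finite subcover $V_{x_1},\ldots,V_{x_n}$ and set $T:=\max_i t_{x_i}$. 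Then $\varphi(T,N)\subset \Int U\subset U$.

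Having $\varphi(T,N)\subset U$, the monotonicity of $\omega$ in its argument combined with the shift invariance $\omega(N)=\omega\bigl(\varphi(T,N)\bigr)$ (which follows directly from the definition $\omega(N)=\bigcap_{t\ge 0}\cl\bigcup_{s\ge t}\varphi(s,N)$) gives
\[
\omega(N)=\omega\bigl(\varphi(T,N)\bigr)\subset \omega(U)=A.
\]
For the reverse inclusion, observe that $A$ is a compact invariant set (attractors are invariant), so $\varphi(t,A)=A$ for all $t\ge 0$ and hence $\omega(A)=A$. Since $A\subset N$, monotonicity of $\omega$ yields $A=\omega(A)\subset \omega(N)$, completing the proof.

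The main technical obstacle is Step~2: producing a \emph{uniform} time $T$ at which all of $N$ has entered $\Int U$. The pointwise statement $\omega(x)\subset A\subset \Int U$ only supplies $t_x$ depending on $x$; it is the combination of compactness of $N$, continuity of $\varphi$, and the attracting block property (ensuring trajectories stay in $\Int U$ after entry) that lets these $t_x$'s be glued into a single $T$.
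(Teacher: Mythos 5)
Your proof is correct, but it takes a genuinely different route from the paper's. The paper's argument is topological and duality-based: it uses the identity $W^s(A)=(A^*)^c$, so that $N\cap A^*=\varnothing$, then invokes normality of the compact metric space to separate the compact sets $N$ and $A^*$ by an open set $U\supset N$ with $\cl(U)\cap A^*=\varnothing$, and cites \cite[Prop.~3.21]{KMV-1a} to conclude $\omega(U)=A$; the result then follows from the same sandwich $A=\omega(A)\subset\omega(N)\subset\omega(U)=A$ that you use at the end. You instead avoid the dual repeller entirely and re-derive the needed fact dynamically: you take a closed attracting block $U$ with $\omega(U)=A$ and $A\subset\Int U$, and use the pointwise entry times $t_x$, continuity, the attracting-block property (to make entry into $\Int U$ permanent), and compactness of $N$ to extract a uniform time $T$ with $\varphi(T,N)\subset U$, after which shift-invariance and monotonicity of $\omega$ give $\omega(N)\subset\omega(U)=A$. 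Your argument is sound --- the only point left implicit is that $\omega(x)\neq\varnothing$ on a compact space, which is what guarantees the existence of $t_x$ --- and it is essentially a self-contained proof of the special case of \cite[Prop.~3.21]{KMV-1a} that the paper cites. What the paper's route buys is brevity by leaning on the attractor--repeller machinery already developed in \cite{KMV-1a}; what yours buys is independence from that external proposition and from the characterization of $W^s(A)$ as $(A^*)^c$, at the cost of a longer compactness argument.
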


\begin{proof}
By definition $(W^s(A))^c = A^*$ and thus $N\cap A^*=\varnothing$ by the assumptions on $N$.
Since compact metric spaces are normal, there exist  open sets 
separating $N$ and $A^*$, i.e. there exist open sets $U\supset N$
and $V\supset A^*$ such that $\cl(U)\cap V=\varnothing$, and thus
$\cl(U)\cap A^*=\varnothing$.
By \cite[Prop.\ 3.21]{KMV-1a} we have that $\omega(U) = A$, and therefore
\[
A = \omega(A) \subset \omega(N) \subset \omega(U) = A,
\]
which proves $\omega(N) = A$.
\end{proof}

\begin{lemma}
\label{injstunst}
The mapping $A\cap R \mapsto W^s(A\cap R)$ for an attractor $A$ and a repeller $R$ is injective.
\end{lemma}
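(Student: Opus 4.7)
The plan is to show that if $M_1 = A_1\cap R_1$ and $M_2 = A_2\cap R_2$ are two presentations of Morse sets satisfying $W^s(M_1) = W^s(M_2) =: W$, then $M_1 = M_2$. The strategy is to characterize each $M_i$ intrinsically from $W$ as a maximal compact invariant subset, so the representations $(A_i,R_i)$ become irrelevant to the conclusion.

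The first step I would take is to derive the identity $W^s(A\cap R) = W^s(A)\cap R$ for any attractor $A$ and repeller $R$. Since $W^s$ preserves meets on compact invariant sets by Lemma~\ref{lem:stlatthom}, this reduces to showing $W^s(R) = R$. The containment $R\subset W^s(R)$ is immediate from invariance of $R$. For the reverse, given $x\notin R$, the attractor--repeller pair theorem \cite[Thm.~3.19]{KMV-1a} applied to the pair $(R^*,R)$ yields $\omega(x)\subset R^*$, and since $R^*\cap R = \varnothing$, this forces $\omega(x)\not\subset R$, so $x\notin W^s(R)$.

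Next I would show that every compact invariant set $S\subset W$ is contained in $M_i$ for $i=1,2$. By the identity above, $W = W^s(A_i)\cap R_i$, so $S\subset W^s(A_i)$. Applying Lemma~\ref{aux} to the compact set $A_i\cup S$, which satisfies $A_i\subset A_i\cup S\subset W^s(A_i)$, gives $\omega(A_i\cup S) = A_i$, and therefore $\omega(S)\subset A_i$. Since $S$ is compact and invariant, $\omega(S) = S$, so $S\subset A_i$; combined with $S\subset W\subset R_i$, this yields $S\subset A_i\cap R_i = M_i$.

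Applying this step to $S = M_1$ and $S = M_2$, each of which is compact, invariant, and contained in $W$ (since $M_i\subset W^s(M_i) = W$), gives $M_1\subset M_2$ and $M_2\subset M_1$, hence $M_1 = M_2$. The main obstacle is more bookkeeping than substantive, namely establishing the identity $W^s(R) = R$ and correctly invoking the proof technique of Lemma~\ref{aux} without requiring $A_i\subset S$; once these are in hand the remainder of the argument reduces cleanly to identifying $M_i$ as a maximal compact invariant subset of $W$.
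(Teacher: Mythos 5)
Your proof is correct, and it takes a genuinely different route from the paper's. The paper compares the two presentations directly: setting $S=A\cap R$ and $S'=A'\cap R'$, it uses Lemma~\ref{lem:stlatthom} to get $W^s(S\wedge S')=W^s(S)=W^s(S')$, sandwiches $S\cup S'$ between $S\wedge S'$ and $W^s(S\wedge S')$, and applies Lemma~\ref{aux} with $N=S\cup S'$ to conclude $\omega(S\cup S')=S\wedge S'$, which together with $\omega(S\cup S')=S\cup S'$ forces $S=S'$; the first paragraph of the paper's proof (showing $(A\cap R, A^*\cap R)$ is an attractor--repeller pair in $R$) is there to justify treating $S\wedge S'$ as a (relative) attractor in that application of Lemma~\ref{aux}. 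You instead characterize each $M_i$ intrinsically from $W=W^s(M_i)$ as its maximal compact invariant subset, which reduces injectivity to exhibiting a left inverse of $M\mapsto W^s(M)$. Your application of Lemma~\ref{aux} is with the genuine attractor $A_i$ and $N=A_i\cup S$, so you avoid the mild abuse in the paper of invoking Lemma~\ref{aux} (stated for attractors of $\varphi$ on $X$) for a set that is only an attractor relative to a repeller subsystem. The price is the auxiliary identity $W^s(R)=R$, but your derivation of it is sound: the inclusion $R\subset W^s(R)$ follows from compactness and forward invariance of $R$, and the reverse inclusion follows since $W^s(R^*)=R^c$ by the duality $W^s(A)=(A^*)^c$ used in Lemma~\ref{lem:stlatthom0}, so that $x\notin R$ gives $\varnothing\neq\omega(x)\subset R^*$ and hence $\omega(x)\not\subset R$. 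The only points you pass over silently are the invariance of $A\cap R$ (needed so that $A\wedge R=A\cap R$ when invoking Lemma~\ref{lem:stlatthom}) and the nonemptiness of $\omega(x)$, both of which are standing facts in the paper's framework, so there is no real gap.
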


\begin{proof}
 Let $U\in \sABlockC(\varphi)$ be an attracting block for $A$ so that  $U^c \in \sRBlockO(\varphi)$ is a repelling block for $A^*$.
Then, since $R$ is forward-backward invariant, $U\cap R$ is an attracting block in $R$, and $U^c\cap R$ is a repelling block in $R$.
From the properties of limit sets, cf.\ \cite[Lemma\ 2.9 and Propositions 2.11, 2.13]{KMV-1a}, since both $U$ and $R$ are forward invariant and
$U^c$ and $R$ are backward invariant, we have
\[
\omega(U\cap R) = \omega(U) \wedge \omega(R)= A\cap R\quad\hbox{and} \quad \alpha(U^c\cap R) = 
\alpha(U^c) \cap \alpha (R) = A^*\cap R.
\]
Therefore $(A\cap R,A^*\cap R)$ is an attractor-repeller pair in $R$.

Let $S = A\cap R$ and $S'= A'\cap R'$.
Suppose $W^s(S) = W^s(S')$. Then $W^s(S\wedge S') = W^s(S) = W^s(S')$ by Lemma~\ref{lem:stlatthom}.
Since $S\subset W^s(S)$ and $S'\subset W^s(S')$, we have 
\[
S\wedge S'\subset S\cup S'\subset  W^s(S\wedge S'),
\]
and $S\cup S'$ is compact.
Applying Lemma~\ref{aux} with $N=S\cup S'$, we have that $ \omega(S\cup S') = S\wedge S'$. Also $\omega(S\cup S')=\omega(S)\cup\omega(S')=S\cup S'$ by the invariance of $S,S'$. Therefore $S\cap S'\supset S\wedge S' =S\cup S'$ so that $S=S'$, which proves the injectivity.
\end{proof}

\subsection{Combinatorial Dynamics}
\label{sec:combdyn}

Let $\cX$ be a finite set.
A \emph{binary relation} $\cF$ on $\cX$ is subset of the product space $\cX\times \cX$.
We make use of the following concepts and structures, see \cite{KMV-1a,KMV-1b} for details.
In what follows $\cF$ can be interpreted as acting on sets by
\[
\cF(\cU) = \bigcup_{\xi\in \cU}\cF(\xi),\quad \cF(\xi) := \setof{\eta\in \cX \mid (\xi,\eta)\in \cF}.
\]

The  {\em forward invariant sets} and  {\em backward invariant sets} are given by
 \[
 \sInvset^+(\cF):=\{\cU\subset\cX \mid \cF(\cU)\subset\cU\}\;\hbox{and}\;
  \sInvset^-(\cF):=\{\cU\subset\cX \mid \cF^{-1}(\cU)\subset\cU\}.
 \]
These sets are sublattices of the Boolean algebra $\sSet(\cX)$, and the complement map $\cU\mapsto \cU^c$ is a lattice isomorphism from $\sInvset^+(\cF)$ to $\sInvset^-(\cF)$. 
A subset set $\cS\subset \cX$ is an {\em invariant set} if 
$\cS\subset \cF(\cS)$ and $ \cS\subset \cF^{-1}(\cS)$.
The invariant sets are denoted by $\IS(\cF)$, which is a lattice (not necessarily distributive).
As in the continuous case, $\Inv(\cU)$ denotes the maximal invariant set in $\cU$.

The sets of all {\em attractors} and {\em repellers} of $\cF$ are denoted by
\[
 \sAtt(\cF):=\{\cA\subset\cX~|~\cF(\cA)=\cA\}\;\text{and}\;\sRep(\cF):=\{\cR\subset\cX~|~\cF^{-1}(\cR)=\cR\},
\]
respectively, and are finite distributive lattices. Note that attractors and repellers are not necessarily invariant sets. 

The omega and alpha limit sets  in this setting are defined as follows
\begin{equation}\label{eqn:bomega}
\bomega(\cU) := \bigcap_{k\ge 0}\bigcup_{n\ge k}\cF^{n}(\cU);
\end{equation}
and
\begin{equation}\label{eqn:balpha}
 \balpha(\cU) :=  \bigcap_{k\le 0}\bigcup_{n\le k}\cF^{n}(\cU),
\end{equation}
which are forward and backward invariant sets respectively.

By \cite[Proposition~2.8]{KMV-1b}, $\bomega$ and $\balpha$ define surjective lattice homomorphisms  onto $\sAtt(\cF)$ and $\sRep(\cF)$ which yields the followinf commutative diagram
\begin{equation}
\label{eq:dualAlphaOmegaDiscrete}
\begin{diagram}
\node{\sInvset^+(\cF)} \arrow{e,l,<>}{^c}\arrow{s,l,A}{\bomega} \node{\sInvset^-(\cF)} \arrow{s,r,A}{\balpha} \\
\node{\sAtt(\cF)}     \arrow{e,l,<>}{*} \node{\sRep(\cF)} 
\end{diagram}
\end{equation}
where $\cA \mapsto \cA^*:=\balpha(\cA^c)$, cf.\ \cite[Diagram (5)]{KMV-1b}.

\subsection{Regular closed sets}
\label{sec:closedRegular}

For the purpose of relating combinatorial dynamics to topological dynamics it is useful to restrict the collection of sets used to discretize phase space.
Let $(X,\mathscr{T})$ be a topological space.
Define $U^{\rc\rc} := \cl\Int U$, then sets satisfying $U^{\rc\rc} = U$ are called the \emph{regular closed} sets in $\sSet(X)$ 
which  form a complete Boolean algebra $\scrR(X)$ under the operations
\[
U^\rc := \cl U^c,\quad
U\vee U' := U\cup U'
\quad\text{and}\quad 
 U\wedge U' := (U\cap U')^{\rc\rc} = \cl (\Int U\cap \Int U')
\]
cf.\ \cite{Walker}.

\begin{lemma}
\label{regclhom}
$^{\rc\rc}\colon \scrC(X) \to \scrR(X)$ given by $U\mapsto U^{\rc\rc}$ is a lattice homomorphism.
\end{lemma}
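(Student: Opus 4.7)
The plan is to verify the three defining properties of a lattice homomorphism between bounded lattices: preservation of $\wedge$, preservation of $\vee$, and preservation of the top and bottom elements. The top and bottom are immediate since $\varnothing^{\rc\rc}=\cl\Int\varnothing=\varnothing$ and $X^{\rc\rc}=\cl\Int X=X$. The substantive content is showing
\[
(U\cup V)^{\rc\rc}=U^{\rc\rc}\cup V^{\rc\rc}\qquad\text{and}\qquad (U\cap V)^{\rc\rc}=U^{\rc\rc}\wedge V^{\rc\rc}
\]
for all closed $U,V\in\scrC(X)$, where recall the join in $\scrR(X)$ is ordinary union and the meet is given by $N\wedge N':=(N\cap N')^{\rc\rc}$.

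First I would treat the join. The inclusion $U^{\rc\rc}\cup V^{\rc\rc}\subset (U\cup V)^{\rc\rc}$ is straightforward from the monotonicity of $\cl$ and $\Int$ combined with $\Int U\cup \Int V\subset \Int(U\cup V)$. The reverse inclusion is the key point: I would show that for $U,V$ closed,
\[
\Int(U\cup V)\subset \cl\Int U\cup \cl\Int V,
\]
and then take closures. Given $x\in \Int(U\cup V)$, choose an open neighborhood $W$ of $x$ with $W\subset U\cup V$. Then $W\cap U^c$ is open (using that $U$ is closed) and contained in $V$, hence $W\cap U^c\subset \Int V$; symmetrically $W\cap V^c\subset \Int U$. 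Every open neighborhood of $x$ inside $W$ therefore meets $\Int U\cup \Int V$ (unless it is already contained in $\Int U\cap U$, putting $x\in\Int U$), so $x\in \cl\Int U\cup \cl\Int V$.

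For the meet, the key auxiliary fact is that for a closed set $U$ one has $\Int U^{\rc\rc}=\Int U$. One inclusion is clear from $\Int U\subset \cl\Int U=U^{\rc\rc}$; for the other, any $x\in \Int\cl\Int U$ has an open neighborhood contained in $\cl\Int U\subset \cl U=U$, so $x\in\Int U$. Using this, compute
\[
U^{\rc\rc}\wedge V^{\rc\rc}=\cl\bigl(\Int U^{\rc\rc}\cap \Int V^{\rc\rc}\bigr)=\cl(\Int U\cap \Int V)=\cl\Int(U\cap V)=(U\cap V)^{\rc\rc},
\]
where I use that interior commutes with finite intersection. This gives preservation of meet.

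The mildly delicate step is the inclusion $\Int(U\cup V)\subset \cl\Int U\cup \cl\Int V$ in the join argument, which relies essentially on both $U$ and $V$ being closed (so that their complements are open and separate cleanly inside $W$); the meet identity is comparatively routine once the lemma $\Int U^{\rc\rc}=\Int U$ is in hand. Putting these together yields the lattice homomorphism property.
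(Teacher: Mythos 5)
Your proof is correct, but it takes a more self-contained route than the paper's. For the meet, the paper does not use your identity $\Int U^{\rc\rc}=\Int U$; instead it argues abstractly that $^{\rc\rc}$ is idempotent, order-preserving, and (on closed sets) contractive, and then sandwiches: $(U^{\rc\rc}\cap U'^{\rc\rc})^{\rc\rc}\subset (U\cap U')^{\rc\rc}=(U\cap U')^{\rc\rc\rc\rc}\subset (U^{\rc\rc}\cap U'^{\rc\rc})^{\rc\rc}$, which yields $(U\cap U')^{\rc\rc}=U^{\rc\rc}\wedge U'^{\rc\rc}$ without ever computing an interior. For the join, the paper simply cites Halmos (the regular-open analogue plus complementation duality), whereas you prove the nontrivial inclusion $\Int(U\cup V)\subset\cl\Int U\cup\cl\Int V$ directly; your point-set argument is sound --- if an open $W'\ni x$ inside $W\subset U\cup V$ misses $\Int V$, then $W'\cap U^c\subset W\cap U^c\subset\Int V$ forces $W'\subset U$, hence $W'\subset\Int U$ --- though the parenthetical about $\Int U\cap U$ is worded more loosely than it needs to be. What each approach buys: the paper's meet argument is purely order-theoretic and applies verbatim to any idempotent, contractive, monotone operator, and it outsources the only genuinely topological content (the union identity) to a reference; your version keeps everything elementary and makes visible exactly where closedness of $U$ and $V$ is used, namely in $\cl\Int U\subset U$ for the meet and in the openness of $W\cap U^c$ for the join. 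Your additional check of the bounds $\varnothing$ and $X$ is appropriate given the paper's convention that homomorphisms in $\sBDLat$ preserve $0$ and $1$, a point the paper's proof leaves implicit.
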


\begin{proof}
By definition $U\mapsto U^{\rc\rc}$ is an idempotent, order-preserving operator from $\sSet(X) \to \scrR(X)$.
A set $U$ is closed if and only if $U=\cl U$.
Let $\scrC(X)$ be the lattice of closed subsets $X$ which is a sublattice of  $\sSet(X)$.
Since $\Int U\subset U$, we have that $U^{\rc\rc} = \cl \Int U \subset \cl U =U$, which proves that $U\mapsto U^{\rc\rc}$ is also a
contractive operator. 
From the order-preserving property we have that $(U\cap U')^{\rc\rc} \subset U^{\rc\rc} \cap U'^{\rc\rc}$.
From all properties combined we have
\[
(U^{\rc\rc}\cap U'^{\rc\rc})^{\rc\rc} \subset (U\cap U')^{\rc\rc} = (U\cap U')^{\rc\rc\rc\rc}\subset 
(U^{\rc\rc}\cap U'^{\rc\rc})^{\rc\rc}
\]
which proves
\[
(U\cap U')^{\rc\rc} = (U^{\rc\rc}\cap U'^{\rc\rc})^{\rc\rc} = U^{\rc\rc}\wedge U'^{\rc\rc}.
\]
For unions
\[
(U\cup U')^{\rc\rc} = U^{\rc\rc} \cup U'^{\rc\rc},
\]
is proved in \cite[Sect.\ 4, Lem.\ 4]{Halmos} for regular open sets. The same statement for regular closed sets follows from duality $U\mapsto U^c$.
\end{proof}

For regular closed sets, the notion of `set-difference' is defined by
\begin{equation}
\label{defn:wedgeRegular}
U-U':= U\wedge U'^\rc
\end{equation}
Set-difference in $\scrR(X)$ can be related to set-difference in $\sSet(X)$.

\begin{lemma}
\label{lem:difference}
Let $U,U'\in \scrR(X)$. Then  $U- U' = \cl(U \smin U')$.
\end{lemma}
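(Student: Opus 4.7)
The plan is to unfold the definitions and then use the basic regularity identities for $\scrR(X)$. By definition $U-U'=U\wedge U'^\rc$, and the meet in $\scrR(X)$ is given by $V\wedge W=(V\cap W)^{\rc\rc}=\cl(\Int V\cap \Int W)$ for $V,W\in\scrR(X)$. Thus
\[
U-U' \;=\; \cl\bigl(\Int U \cap \Int U'^{\rc}\bigr)\;=\;\cl\bigl(\Int U\cap \Int\cl(U'^c)\bigr).
\]
Because $U'$ is regular closed, $U'^c$ is regular open, so $\Int\cl(U'^c)=U'^c$. Substituting this yields
\[
U-U' \;=\; \cl\bigl(\Int U \cap U'^c\bigr),
\]
and what remains is to show $\cl(\Int U\cap U'^c)=\cl(U\cap U'^c)$.

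The inclusion $\cl(\Int U\cap U'^c)\subset \cl(U\cap U'^c)$ is immediate from $\Int U\subset U$. For the reverse inclusion, I would take $x\in U\cap U'^c$ and show $x\in \cl(\Int U\cap U'^c)$: since $U'^c$ is open, there is an open neighborhood $V$ of $x$ contained in $U'^c$; since $U$ is regular closed, $x\in U=\cl\Int U$, so for any open neighborhood $W$ of $x$, the open set $W\cap V$ still meets $\Int U$, which places a point of $\Int U\cap U'^c$ inside $W$. Hence $x\in\cl(\Int U\cap U'^c)$, giving $U\cap U'^c\subset \cl(\Int U\cap U'^c)$ and therefore the desired equality after taking closures.

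There is no real obstacle here; the only subtle point is the density step in the second paragraph, where one must carefully combine the openness of $U'^c$ with the regular-closedness of $U$ (i.e.\ $U=\cl\Int U$) in order to approximate points of $U\cap U'^c$ by points of $\Int U\cap U'^c$. Once that is in place, chaining the three equalities
\[
U-U'=\cl(\Int U\cap \Int U'^{\rc})=\cl(\Int U\cap U'^c)=\cl(U\cap U'^c)=\cl(U\smin U')
\]
finishes the proof.
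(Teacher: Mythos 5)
Your proposal is correct and follows essentially the same chain of equalities as the paper's proof: unfold $U\wedge U'^{\rc}$, use that $U'^c$ is regular open to replace $\Int\cl(U'^c)$ by $U'^c$, and then identify $\cl(\Int U\cap U'^c)$ with $\cl(U\cap U'^c)$. The only difference is that for this last step the paper cites a standard reference, whereas you supply the (correct) density argument directly; both are fine.
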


\begin{proof}
By definition $ U\wedge U'^\rc = \cl\Int(U\cap U'^\rc) = \cl\Int(U\cap \cl U'^c)$.
Since $U'$ is a regular closed set, the complement $U'^c$ is a regular open set, and therefore $\Int(\cl U'^c) = U'^c$. 
This yields
\[
\cl\Int(U\cap \cl U'^c) = \cl (\Int U \cap \Int\cl U'^c ) = \cl(\Int U \cap U'^c).
\]
Finally, since $U$ is a regular closed set, we have that  $\cl\Int U = U$ and thus $\cl(\Int U \cap U'^c) = \cl(U\cap U'^c)$, cf.\ \cite[pp.\ 35]{Walker}.
Combining this with the previous we obtain
\[
\cl\Int( U \cap U'^c) = \cl (U\cap U'^c) = \cl(U \smin U'),
\]
which proves the lemma.
\end{proof}

\vskip1cm

\end{sloppypar}
\end{document}